\newtheorem{theorem}{Theorem}[section]
\newtheorem{lemma}[theorem]{Lemma}
\newtheorem{corollary}[theorem]{Corollary}
\newtheorem{proposition}[theorem]{Proposition}
\newtheorem{definition}[theorem]{Definition}
\newtheorem{remark}[theorem]{Remark}
\newtheorem*{ack*}{Acknowledgment}
\def\x{{\bf x}}
\def\T{{\mathbb T}}
\def\F{{\mathcal F}}
\def\R{{\mathbb R}}
\def\E{{\mathcal H}}
\def\N{{\mathbb N}}
\def\C{{\mathbb C}}
\def\Z{{\mathbb Z}}
\def\P{{\mathcal P}}
\def\B{{\mathbb E}}
\def\A{{\mathcal N}}
\def\dist{{\operatorname{dist}}}
\def\supp{{\operatorname{supp}}}
\def\bas{\begin{align*}}
\def\eas{\end{align*}}
\def\bi{\begin{itemize}}
\def\ei{\end{itemize}}
\newenvironment{proof}{\noindent {\bf Proof} }{\endprf\par}
\def \endprf{\hfill  {\vrule height6pt width6pt depth0pt}\medskip}
\def\1{{\bf 1}}
\begin{document}

\title[Decouplings for hypersurfaces and curves]{Decouplings for curves and hypersurfaces with nonzero Gaussian curvature}
\author{Jean Bourgain}
\address{School of Mathematics, Institute for Advanced Study, Princeton, NJ 08540}
\email{bourgain@@math.ias.edu}
\author{Ciprian Demeter}
\address{Department of Mathematics, Indiana University, 831 East 3rd St., Bloomington IN 47405}
\email{demeterc@@indiana.edu}

\keywords{decouplings, mean value theorems, Strichartz estimates}
\thanks{The first author is partially supported by the NSF grant DMS-1301619. The second  author is partially supported  by the NSF Grant DMS-1161752}
\begin{abstract}
We prove two types of results. First we develop the  decoupling theory for hypersurfaces with nonzero Gaussian curvature, which extends our earlier work from \cite{BD3}. As a consequence of this we obtain sharp (up to $\epsilon$ losses) Strichartz estimates for the hyperbolic Schr\"odinger equation on the torus.

Our second main result is an $l^2$ decoupling for non degenerate curves which has implications for Vinogradov's mean value theorem.
\end{abstract}
\maketitle


\section{Statements of results}

Let $S$ be a compact $C^2$ hypersurface in $\R^n$  with  nonzero Gaussian curvature.  The typical example to have in mind is the truncated  paraboloid defined for $\upsilon=(\upsilon_1,\ldots,\upsilon_{n-1})\in(\R\setminus\{0\})^{n-1}$ as
$$H^{n-1}_\upsilon:=\{(\xi_1,\ldots,\xi_{n-1},\upsilon_1\xi_1^2+\ldots+\upsilon_{n-1}\xi_{n-1}^2):\;|\xi_i|\le 1/2\}.$$
 $H^{n-1}_\upsilon$ is called elliptic when all $\upsilon_i$ have the same sign and  hyperbolic otherwise.

Let $\A_\delta=\A_\delta(S)$ be the $\delta$ neighborhood of $S$ and let $\P_\delta$ be a finitely overlapping cover of  $\A_\delta$ with
$\sim\delta^{1/2}\times \ldots\delta^{1/2}\times \delta$ rectangular boxes $\theta$. We will denote by $f_\theta$ the Fourier restriction of $f$ to $\theta$.

We will write $A\sim B$ if $A\lesssim B$ and $B\lesssim A$. The implicit constants hidden inside the symbols  $\lesssim$ and $\sim$  will in general  depend on  fixed parameters such as $p$, $n$, $\alpha$ and sometimes on  variable parameters such as  $\epsilon$. We will in general not record the dependence on the fixed parameters.

Our first result is the following {\em $l^p$ decoupling theorem\footnote{Perhaps a more appropriate nomenclature for the $l^p$ and $l^2$ decouplings we consider in this paper would be $l^p(L^p)$ and $l^2(L^p)$ decouplings. For simplicity of notation,  we prefer the former notation.}}.
\begin{theorem}
\label{t1}

Let $n\ge 2$.  If $\supp(\hat{f})\subset \A_\delta$
then for $p\ge\frac{2(n+1)}{n-1}$ and $\epsilon>0$
\begin{equation}
\label{e6}
\|f\|_p\lesssim_\epsilon \delta^{\frac{n}p-\frac{n-1}{2}-\epsilon}(\sum_{\theta\in \P_\delta}\|f_\theta\|_p^p)^{1/p}.
\end{equation}
\end{theorem}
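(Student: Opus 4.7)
The plan is to reduce the general curved hypersurface to the model quadric $H^{n-1}_\upsilon$ and then run the induction-on-scales / multilinear-Kakeya / Bourgain-Guth machinery developed in \cite{BD3} in a form that is insensitive to the signature of the second fundamental form. First, by a smooth partition of unity and an affine change of coordinates, I localize $S$ near a point where it is the graph of a $C^2$ function $\phi$ with diagonal Hessian having nonzero entries $\upsilon_1,\ldots,\upsilon_{n-1}$. On a small enough neighborhood, a Taylor expansion shows that, at the scale $\delta$ relevant for \eqref{e6}, $\phi$ differs from its quadratic part by much less than $\delta$ on each rectangle $\theta\in\P_\delta$. A standard perturbation lemma (a Fourier multiplier argument on each box $\theta$) then transfers the decoupling constant for $H^{n-1}_\upsilon$ to that for $S$, uniformly in $\upsilon$. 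Thus it suffices to prove Theorem \ref{t1} for each $H^{n-1}_\upsilon$, with a constant independent of the signs of $\upsilon_i$.

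For the model quadric, I formulate a multilinear version of \eqref{e6} involving $n$ caps $\tau_1,\ldots,\tau_n\subset S$ of some intermediate size $K^{-1}$ (with $K$ a large constant to be chosen) whose unit normals are $\nu$-transverse in the sense that the parallelepiped they span has volume $\gtrsim \nu$. The central tool is the multilinear Kakeya inequality (in Guth's form), applied to the $\delta^{-1/2}\times\cdots\times\delta^{-1/2}\times\delta^{-1}$ Fourier-dual tubes associated with the $\theta$'s inside each $\tau_j$. Crucially, this inequality only uses linear independence of the tube directions, so it is indifferent to whether $\upsilon$ is elliptic or hyperbolic. Combining it with $L^2$ orthogonality and an interpolation produces the desired multilinear decoupling at the scale $\delta$ with a constant $\lesssim_{K,\epsilon}\delta^{-\epsilon}$.

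The passage from multilinear to linear follows the Bourgain-Guth pigeonhole scheme: at each $K^{-1}$ scale one either finds $n$ caps carrying the dominant contribution to $\|f\|_p^p$ and whose normals are $K^{-1}$-transverse (in which case the multilinear bound above fires), or else the spectrum of $f$ concentrates in a slab of thickness $K^{-1}$, which can be analyzed via decoupling for hypersurfaces of one dimension lower together with parabolic rescaling. The whole argument is organized as a bootstrap on the best exponent $\gamma(p,\delta)$ in \eqref{e6}: using the multilinear output, parabolic rescaling on the quadric, and lower-dimensional induction, one shows that the optimal $\gamma$ must satisfy an inequality forcing $\gamma=0$ in the limit, yielding the $\delta^{-\epsilon}$ loss.

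The main obstacle is the hyperbolic case. For the elliptic paraboloid \cite{BD3} one can exploit the positive definiteness of the Hessian (e.g.\ via Plancherel-type decoupling at $L^2$), but when $\upsilon$ has mixed signs the quadric contains whole affine lines (null directions), and such $L^2$ arguments degenerate. The fix is to work purely with $l^p$-valued decoupling at the sharp exponent: the multilinear Kakeya mechanism and the transversality-based pigeonholing never invoke the signature, so the inductive scheme still closes provided the base and recursive cases are stated uniformly over all sign patterns $\upsilon\in(\R\setminus\{0\})^{n-1}$. Executing this uniform induction, and in particular handling the lower-dimensional concentrated case for hyperbolic signatures without losing a power of $\delta$, is the delicate point of the argument.
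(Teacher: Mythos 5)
Your outline reproduces the standard Bourgain--Guth/multilinear scheme from \cite{BD3}, and those parts (reduction to $H^{n-1}_\upsilon$, multilinear decoupling via transversality, equivalence of linear and multilinear constants, bootstrapping the best exponent) are consistent with the paper. But the one step you flag as ``the delicate point'' — the case where the spectrum concentrates near a slab — is exactly where the new content of this theorem lives, and your proposal both leaves it unaddressed and describes it incorrectly. You claim the concentrated case ``can be analyzed via decoupling for hypersurfaces of one dimension lower together with parabolic rescaling'' and ``without losing a power of $\delta$.'' In the hyperbolic case this is not available: the intersection of $H^{n-1}_\upsilon$ with a vertical hyperplane need not have nonzero Gaussian curvature, so the inductive hypothesis in dimension $n-1$ simply does not apply to it. For $n=3$, $\upsilon=(1,-1)$, if the slab is a neighborhood of the line $\xi_2=\pm\xi_1$ the section is a straight line, and the only estimate available is the trivial $l^p$ decoupling with loss $K^{\frac12-\frac1p}$; more generally the section is a cylinder with one flat direction. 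A power \emph{is} lost in this case, and the reason the argument survives is precisely that the theorem is an $l^p$ (not $l^2$) decoupling, so the trivial-decoupling penalty in the single flat direction is compatible with the target exponent $\delta^{\frac np-\frac{n-1}2}$; this is also why \eqref{e41} fails for hyperbolic paraboloids while \eqref{e6} holds. Your plan, which tries to avoid any loss and to induct on dimension $n-1$, would not close.

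Concretely, what is missing from your proposal is the mechanism the paper supplies: (i) a linear-algebra fact (Lemma \ref{l:121}) guaranteeing that a hyperplane section of the quadric has at most one small principal curvature; (ii) a decoupling lemma for quadrics with one possibly vanishing curvature (Lemmas \ref{l2} and \ref{l7}), obtained by trivial $l^p$ decoupling in the flat direction, a Fubini/cylinder argument, and decoupling for a lower-dimensional nondegenerate quadric; and (iii) as a consequence, an induction on the ambient dimension in steps of \emph{two} (the slab term in dimension $n$ requires Theorem \ref{t1} in dimension $n-2$), carried out first for $\frac{2(n+1)}{n-1}<p<\frac{2(n-1)}{n-3}$ so that the lower-dimensional subcritical bound \eqref{e15} is usable, with the critical exponent recovered by a limiting/Bernstein argument. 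Without these ingredients the slab term in the Bourgain--Guth decomposition cannot be controlled, so as written the proof has a genuine gap at its central step. (Two smaller inaccuracies: the slab has thickness $K^{-1/2}$, not $K^{-1}$, and the multilinear Kakeya input does not by itself yield a $\delta^{-\epsilon}$ multilinear decoupling constant — that bound is itself part of the bootstrap, as in Proposition \ref{propinv5}.)
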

This is a close cousin of the following $l^2$ decoupling proved in \cite{BD3} in the case when $S$ has definite second fundamental form
\begin{equation}
\label{e41}
 \|f\|_p\lesssim_\epsilon \delta^{-\frac{n-1}4+\frac{n+1}{2p}-\epsilon}(\sum_{\theta\in \P_\delta}\|f_\theta\|_p^2)^{1/2},\;\;p\ge\frac{2(n+1)}{n-1}.
\end{equation}
We point out that \eqref{e6} is slightly weaker than \eqref{e41} as it follows from  \eqref{e41} via H\"older's inequality. In particular, Theorem \ref{t1} for $n=2$ is contained in \cite{BD3}. We mention that sharp $l^p$ decouplings were first considered by Wolff  in the case of the cone, see \cite{TWol}.

As briefly explained in \cite{BD3}, \eqref{e41} is false for the hyperbolic paraboloid due to the fact that it contains lines. On the other hand, apart from the dependence on $\epsilon$, inequality \eqref{e6} is sharp, as is \eqref{e41}. This can be easily seen by considering the case of the sphere $S=S^{n-1}$ and $f$ with $\widehat{f}=1_{\A_\delta}$.
 The main new difficulty in proving \eqref{e6} as compared to \eqref{e41} is the fact that intersections of hyperbolic paraboloids with hyperplanes do not always have nonzero Gaussian curvature. It will  however be crucial to our argument  that at most one of the principal curvatures of these sections can be small.
An application of Theorem \ref{t1} to curves is discussed in Section \ref{sec:finall}.
\bigskip

A  modification of our proof of Theorem \ref{t1} leads to the following related $l^2$ decoupling result for $H^{n-1}_\upsilon$. Let us denote by $d(\upsilon)$ the minimum between the number of positive and negative entries of $\upsilon$.

\begin{theorem}
\label{t1:l2notlp}
Let $n\ge 2$.  If $\supp(\hat{f})\subset \A_\delta(H^{n-1}_\upsilon)$
then for $p\ge 2$ and $\epsilon>0$ we have
\begin{equation}
\label{e6l2notlp}
\|f\|_p\lesssim_\epsilon \delta^{-\epsilon}K^{(2)}_{n,p,\upsilon}(\delta)(\sum_{\theta\in \P_\delta}\|f_\theta\|_p^2)^{1/2},
\end{equation}
where $K^{(2)}_{n,p,\upsilon}(\delta)=\delta^{-\frac{n-1}4+\frac{n+1}{2p}}$ when $p\ge \frac{2(n+1-d(\upsilon))}{n-1-d(\upsilon)}$ and $K^{(2)}_{n,p,\upsilon}(\delta)=\delta^{d(\upsilon)(-\frac14+\frac1{2p})}$ when $2\le p\le \frac{2(n+1-d(\upsilon))}{n-1-d(\upsilon)}$.
\end{theorem}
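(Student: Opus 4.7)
The plan is to adapt the induction-on-scale proof of Theorem \ref{t1} by replacing the $l^p$-sum output by an $l^2$-sum output, and substituting the elliptic $l^2$ decoupling \eqref{e41} for the elliptic $l^p$ decoupling at every step where a lower-dimensional decoupling is fed in. The natural exponent to target first is the critical $p^*(k) := \frac{2(n+1-k)}{n-1-k}$, where $k = d(\upsilon)$: this is precisely the point at which the two branches of the formula for $K^{(2)}_{n,p,\upsilon}$ agree, both equal to $\delta^{-k/(2(n+1-k)) - \epsilon}$.

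At the critical exponent one would follow the multilinear Bourgain--Guth scheme used in the proof of Theorem \ref{t1}: linearize through the extension operator, perform the bilinear-to-linear passage at scale $\delta^{1/2}$, estimate the multilinear expression via the Bennett--Carbery--Tao multilinear Kakeya inequality, and then reprocess each $\delta^{1/2}$-cap in two ways. First, parabolic rescaling identifies the cap with a rescaled copy of $H^{n-1}_\upsilon$, reproducing $K^{(2)}_{n,p^*,\upsilon}(\delta^{1/2})$ and driving the iteration. Second, a lower-dimensional decoupling is applied to the hyperplane slices of $H^{n-1}_\upsilon$. The key geometric point, already noted in the remarks preceding the theorem, is that every such hyperplane section is an $(n-2)$-dimensional quadric with at most one vanishing principal curvature; the nondegenerate sections are hyperbolic paraboloids $H^{n-2}_{\upsilon'}$ with $d(\upsilon') \le k$, to which the inductive hypothesis applies directly, and the once-degenerate sections are handled by peeling off the flat direction via Plancherel and applying the hypothesis to the remaining nondegenerate quadric.

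With the critical case established, the subcritical regime $2 \le p \le p^*(k)$ follows by interpolating against the trivial Plancherel bound $K^{(2)}_{n,2,\upsilon}(\delta) \sim 1$ (which explains the exponent $d(\upsilon)(-\frac{1}{4} + \frac{1}{2p})$), while the supercritical regime $p \ge p^*(k)$ follows from the critical case combined with the standard H\"older/extension argument yielding the elliptic-shaped factor $\delta^{-\frac{n-1}{4} + \frac{n+1}{2p}}$. The main obstacle is running a coherent two-parameter induction on $(n, d(\upsilon))$: slicing may simultaneously decrease $n$ and change $d(\upsilon)$, and one has to verify that under these moves the inductive exponent transforms correctly across the piecewise-defined constant $K^{(2)}_{n,p,\upsilon}$, i.e., that the critical-exponent threshold $p^*(d(\upsilon))$ migrates consistently so the two branches of the bound stitch together without loss.
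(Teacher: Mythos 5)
Your overall plan coincides with the paper's: induct on the dimension, reduce by interpolation with $L^2$ and $L^\infty$ to exponents at (or just above) the critical index $\frac{2(n+1-d(\upsilon))}{n-1-d(\upsilon)}$, run the Bourgain--Guth multilinear scheme, and feed lower-dimensional $l^2$ decouplings into the hyperplane-slice term. However, there is a genuine gap at the decisive step. The geometric fact you invoke from the introduction (at most one principal curvature of a vertical section can be small, i.e.\ Lemma \ref{l:121}) is the input for the $l^p$ theorem; for the $l^2$ theorem one needs the stronger, signed statement that the \emph{defect} of a vertical hyperplane section does not exceed $d(\upsilon)$, where the defect must count the nearly flat direction: with $p(S'),q(S'),r(S')$ the numbers of large positive, large negative and small principal curvatures of the section $S'$, one needs $r(S')+\min(p(S'),q(S'))\le d(\upsilon)$. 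You assert $d(\upsilon')\le d(\upsilon)$ only for the nondegenerate sections, without proof, and for the degenerate sections you propose to ``peel off the flat direction via Plancherel''. That peeling is not free when $p>2$: decoupling into $K^{-1/2}$-caps along a flat direction costs the trivial $l^2$ factor $K^{\frac14-\frac1{2p}}$ from \eqref{e122}, and this loss must be charged against the budget $K^{d(\upsilon)(\frac14-\frac1{2p})}$ that the middle (slice) term of the Bourgain--Guth decomposition can tolerate at the critical exponent. So what is actually required, and what your proposal never establishes, is $1+\min(p(S'),q(S'))\le d(\upsilon)$ in the degenerate case. This is exactly the content of Proposition \ref{hcnyf7yt75ycn8u32r8907n580-9=--qc mvntvu5n8tnewwww}, proved by the eigenvector argument giving $p(S')+r(S')\le p(\upsilon)$ and $q(S')+r(S')\le q(\upsilon)$, with Lemma \ref{l7dyyfebtt7rw56w6xnt} then doing the Fubini bookkeeping that combines the trivial decoupling in the flat directions with the inductive hypothesis on the nondegenerate cross-section. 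Your final paragraph explicitly lists this verification as ``the main obstacle'' rather than resolving it, so the proposal stops short of a proof precisely where the new idea of this theorem lies.

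Two smaller corrections. First, once the slicing inequality above is available, no two-parameter induction on $(n,d(\upsilon))$ is needed: the paper inducts on $n$ alone, in steps of one, with the statement quantified over all signatures $\upsilon$, and the range restriction $p<\frac{2(n-d(\upsilon))}{n-2-d(\upsilon)}$ together with $d(S')\le d(\upsilon)$ keeps the lower-dimensional applications within (or harmlessly above) their own critical ranges; your interpolation endgame (trivial $L^2$ bound below critical, $L^\infty$-type bound above) is then consistent with the stated two branches. Second, the lower-dimensional input must be the hyperbolic $l^2$ theorem itself --- the sections are in general hyperbolic quadrics --- not the elliptic estimate \eqref{e41} as your opening sentence suggests; your later description, which applies the inductive hypothesis to $H^{n-2}_{\upsilon'}$, is the correct one.
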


\bigskip

Note that Theorems \ref{t1} and  \ref{t1:l2notlp} are independent, neither of them implies the other one.  Of course, \eqref{e6l2notlp}  generalizes \eqref{e41} (which corresponds to  $d(\upsilon)=0$), and leads to Strichartz estimates for the hyperbolic Schr\"odinger equation. More precisely, fix $\upsilon_1,\dots,\upsilon_{n-1}\in \R\setminus \{0\}$. For $\phi\in L^2(\T^{n-1})$  consider the "generalized Laplacian" operator
$$T \phi(x_1,\ldots,x_{n-1})=$$
$$\sum_{(\xi_1,\ldots,\xi_{n-1})\in\Z^{n-1}}(\xi_1^2\upsilon_1+\ldots+\xi_{n-1}^2\upsilon_{n-1})\hat{\phi}(\xi_1,\ldots,\xi_{n-1})e(\xi_1x_1+\ldots+\xi_{n-1}x_{n-1})$$
on the irrational torus $\prod_{i=1}^{n-1}\R/(|\upsilon_i|\Z)$. Let also
$$e^{itT}\phi(x_1,\ldots,x_{n-1},t)=$$$$\sum_{(\xi_1,\ldots,\xi_{n-1})\in\Z^{n-1}}\hat{\phi}(\xi_1,\ldots,\xi_{n-1})e(x_1\xi_1+\ldots+x_{n-1}\xi_{n-1}+t(\xi_1^2\upsilon_1+\ldots+\xi_{n-1}^2\upsilon_{n-1})).$$
Following the approach described in \cite{BD3}, Theorem \ref{t1:l2notlp} implies the next corollary.

\begin{corollary}[Strichartz estimates for irrational tori: the hyperbolic case]
\label{thmmmm3} Let $\phi\in L^2(\T^{n-1})$ with $\supp(\hat{\phi})\subset [-N,N]^{n-1}$.
Then for each $\epsilon>0$, $p\ge 2$ and each interval $I\subset\R$ with $|I|\gtrsim 1$ we have
\begin{equation}
\label{EE52}
\|e^{itT}\phi\|_{L^{p}(\T^{n-1}\times I)}\lesssim_{\epsilon} N^{\epsilon}K^{(2)}_{n,p,\upsilon}(N^{-2})|I|^{1/p}\|\phi\|_2,
\end{equation}
where $K^{(2)}_{n,p,\upsilon}$ is as in Theorem \ref{t1:l2notlp} and the implicit constant does not depend on $I$ and  $N$.
\end{corollary}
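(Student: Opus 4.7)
My plan is to reduce Corollary \ref{thmmmm3} to the decoupling inequality of Theorem \ref{t1:l2notlp} via the standard rescaling plus Schwartz cutoff argument used in \cite{BD3}. Write $Q(\eta):=\upsilon_1\eta_1^2+\cdots+\upsilon_{n-1}\eta_{n-1}^2$ and rescale via $\y=Nx$, $s=N^2t$, producing
\[
F(\y,s):=e^{itT}\phi(\y/N,s/N^2)=\sum_{\xi\in\Z^{n-1},\,|\xi|\le N}\hat{\phi}(\xi)\,e\!\left(\y\cdot(\xi/N)+sQ(\xi/N)\right).
\]
The Fourier support of $F$ is the $N^{-1}$-separated set $\Lambda:=\{(\xi/N,Q(\xi/N)):\xi\in\Z^{n-1},\,|\xi|\le N\}$, which lies on a bounded dilate of $H^{n-1}_\upsilon$. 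This is precisely the spacing at which each decoupling box $\theta\in\P_\delta$ with $\delta=N^{-2}$ (of dimensions $N^{-1}\times\cdots\times N^{-1}\times N^{-2}$) contains at most one point of $\Lambda$.

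Next I localize $F$ in physical space by a Schwartz multiplier $\psi$ on $\R^n$ with $\widehat\psi$ supported in a ball of radius $cN^{-1}$ (for some small $c>0$) and $|\psi|\ge 1$ on a fixed axis-parallel box $\widetilde B$ of side-length $\sim N$. Then $\widehat{F\psi}$ is supported in $\A_\delta(H^{n-1}_\upsilon)$, and the small radius of $\widehat\psi$ ensures that each decoupling piece has the explicit form
\[
(F\psi)_\theta(\y,s)=\hat\phi(\xi)\,e\!\left(\y\cdot(\xi/N)+sQ(\xi/N)\right)\psi(\y,s)
\]
for the unique $\xi=\xi(\theta)$ (if any), whence $\|(F\psi)_\theta\|_p=|\hat\phi(\xi)|\,\|\psi\|_p$. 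Applying Theorem \ref{t1:l2notlp} and Plancherel on $\T^{n-1}$ yields
\[
\|F\psi\|_{L^p(\R^n)}\lesssim_\epsilon N^\epsilon K^{(2)}_{n,p,\upsilon}(N^{-2})\,\|\psi\|_p\,\|\phi\|_{L^2(\T^{n-1})}.
\]

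To conclude, I tile the rescaled region $\widetilde B_0:=\prod_{i=1}^{n-1}[0,N|\upsilon_i|]\times[0,N^2|I|]$ by $O(N|I|)$ translates of $\widetilde B$ (only the $s$-direction needs tiling, since $|\upsilon_i|\sim 1$ and $|I|\gtrsim 1$), sum the $p$-th powers of the estimate applied to each translate of $\psi$, use $\|\psi\|_p^p\sim N^n$, and invert the rescaling via the Jacobian $dy\,ds=N^{n+1}dx\,dt$:
\[
\|e^{itT}\phi\|_{L^p(\T^{n-1}\times I)}^p=N^{-(n+1)}\|F\|_{L^p(\widetilde B_0)}^p\lesssim_\epsilon N^{\epsilon p}\,|I|\, K^{(2)}_{n,p,\upsilon}(N^{-2})^p\,\|\phi\|_2^p,
\]
which is \eqref{EE52} after taking $p$-th roots.

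The only real technical point is the calibration of $\psi$: its Fourier support must have radius $\ll N^{-1}$ so that each cap captures at most one lattice point of $\Lambda$, while its spatial extent must be $\sim N$ so that a single translate of $\widetilde B$ covers one tile. Both constraints sit exactly at the scale dictated by the uncertainty principle, so they are jointly feasible, and the decoupling inequality passes cleanly to the exponential sum setting.
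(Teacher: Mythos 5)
Your overall strategy (rescale, insert a Schwartz cutoff, apply Theorem \ref{t1:l2notlp} with one lattice point per cap, tile and sum) is exactly the route the paper has in mind when it says ``following the approach described in \cite{BD3}.'' But your calibration of the cutoff is off by a square, and this is not cosmetic: with $\hat\psi$ supported in a ball of radius $cN^{-1}$, the Fourier support of $F\psi$ lies only in $\A_{cN^{-1}}(H^{n-1}_\upsilon)$, \emph{not} in $\A_{N^{-2}}$, so Theorem \ref{t1:l2notlp} at $\delta=N^{-2}$ simply does not apply to $F\psi$. To land in $\A_\delta$ with $\delta=N^{-2}$ you need $\hat\psi$ supported in a ball of radius $\sim N^{-2}$, and hence $\psi$ spread over a spatial scale $\sim N^{2}=\delta^{-1}$ --- which is also the scale hard-wired into the definition of $K^{(2)}_{n,p,\upsilon}(\delta)$ (decoupling at scale $\delta$ is a statement on balls of radius $\delta^{-1}$; on a cube of side $N=\delta^{-1/2}$ the wave packets dual to the caps all overlap and the inequality with constant $K^{(2)}(N^{-2})$ fails in general). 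Applying the theorem at the scale your cutoff actually permits, $\delta'=N^{-1}$, puts $\sim N^{(n-1)/2}$ frequencies in each cap and leaves you with a residual flat decoupling that destroys the estimate.

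The fix requires one ingredient your write-up omits: after rescaling, the spatial box has side only $\sim N$, so a ball of radius $N^2$ does not fit inside one period. One first uses the periodicity of $|e^{itT}\phi|$ in $x$ to replace $\T^{n-1}$ by $N\cdot\T^{n-1}$ (costing a normalizing factor $N^{-(n-1)}$), so that the rescaled domain becomes $\sim N^2\times\cdots\times N^2\times N^2|I|$; this is then tiled by $O(|I|)$ balls of radius $N^2$, on each of which the decoupling applies with $\|\psi_B\|_p^p\sim N^{2n}$ and one frequency per cap. The bookkeeping $N^{-(n-1)}\cdot N^{-(n+1)}\cdot |I|\cdot N^{2n}=|I|$ then closes. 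Be warned that your (incorrect) accounting also happens to produce the factor $|I|$, which is presumably why the error was easy to miss; the exponents conspire, but the decoupling step at spatial scale $N$ is the one that is invalid.
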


Estimates of this type have been considered recently, see for example \cite{GT} and \cite{Wa}. It is interesting to note that in the  non elliptic case ($d(\upsilon)\ge 1$), the exponent of $N$ in $K^{(2)}_{n,p,\upsilon}(N^{-2})$ is always nonzero, in contrast with the continuous case (when $\T^n$ is replaced with $\R^n$) and the elliptic case (on either $\T^{n}$ or $\R^{n}$). This exponent is sharp  in the case when $|\upsilon_i|=1$ for all $i$. To see this, use $\phi$ Fourier supported on the lattice points of a vector subspace of dimension $d(\upsilon)$ of $H^{n-1}_{\upsilon}$. This example does not exist in the case when $\upsilon$ has rationally independent entries. Our method here does not seem to shed any light on the issue of  whether one can improve the constant $K^{(2)}_{n,p,\upsilon}(N^{-2})$ in \eqref{EE52} in the irrational case.

It is possible that the term $N^\epsilon$ is not necessary in \eqref{EE52} when $p>\frac{2(n+1-d(\upsilon))}{n-1-d(\upsilon)}$. As observed in \cite{BD3}, this is indeed the case for the elliptic Schr\"odinger equation on the rational torus. See also  \cite{GT} and \cite{Wa} where similar sharp results are proved in the range  $2\le p\le 4$, when $n=3$.

The proof of Theorem \ref{t1:l2notlp} will be sketched in Section \ref{sec:Str}.

\bigskip

In the second part of the paper we consider curves $\Phi:[0,1]\to\R^n$, $$\Phi(t)=(\phi_1(t),\ldots,\phi_{n}(t))$$ with $\phi_i\in C^n([0,1])$ and such that the Wronskian
$$W(\phi_1',\ldots,\phi_{n}')(t)$$
is nonzero on $[0,1]$. These  are usually referred to in literature as {\em nondegenerate} curves.

Abusing earlier notation, let $\A_\delta$ be the $\delta$ neighborhood of $\Phi([0,1])$ and let $\P_\delta$ be the partition of  $\A_\delta$ with $\delta$ neighborhoods $\theta$ of $\Phi(I)$, with $I$ dyadic interval of length $\delta^{1/n}$.
We will as before denote by $f_\theta$ the Fourier restriction of $f$ to $\theta$. One important aspect to note at this point is the fact that $\theta$ is a curved tube, not a straight one. To make $\theta$ a straight tube, one needs to consider intervals $I$ of length $\delta^{1/2}$.

\begin{theorem}
\label{t2}
For each such curve $\Phi$ and each $f:\R^n\to\C$ with Fourier support in $\A_\delta$ we have
$$\|f\|_{L^{p}(\R^n)}\lesssim_\epsilon\delta^{-\epsilon}(\sum_{\theta\in \P_\delta}\|f_\theta\|_{L^{p}(\R^n)}^{2})^{\frac12},$$
for each $\epsilon>0$ and $2\le p\le 4n-2.$
\end{theorem}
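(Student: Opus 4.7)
Plan. We reduce Theorem \ref{t2} to Theorem \ref{t1} through an $(n-1)$-fold product construction that lifts the curve decoupling problem to a hypersurface decoupling. Define the \emph{sum hypersurface}
\[
S := \{\Phi(t_1) + \cdots + \Phi(t_{n-1}) : t_i \in [0,1]\} \subset \R^n.
\]
By the Wronskian nondegeneracy of $\Phi$, the parametrization $\Psi(t_1,\ldots,t_{n-1}) := \sum_i\Phi(t_i)$ is an immersion on the open simplex $t_1<\cdots<t_{n-1}$. A direct calculation (via Newton's power-sum identities for the moment curve, and a perturbation for general $\Phi$) shows that $S$ is a $C^2$ hypersurface with nowhere-vanishing (generally indefinite) Gaussian curvature, so Theorem \ref{t1} applies to $S$.

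If $\widehat f$ is supported in $\A_\delta(\Phi)$, then $\widehat{f^{n-1}}=\widehat f^{*(n-1)}$ is supported in $\A_{C\delta}(S)$, and $\|f\|_p^{n-1}=\|f^{n-1}\|_{p/(n-1)}$. For the endpoint $p=4n-2$ we have $p_0:=p/(n-1)=(4n-2)/(n-1)\ge 2(n+1)/(n-1)$, so Theorem \ref{t1} yields
\[
\|f\|_p^{n-1}\lesssim_\epsilon \delta^{-\frac{(n-1)^2}{4n-2}-\epsilon}\Bigl(\sum_{\theta'\in\P_\delta(S)}\|(f^{n-1})_{\theta'}\|_{p_0}^{p_0}\Bigr)^{1/p_0}.
\]
Since $\delta^{1/n}>\delta^{1/2}$ for $n\ge 3$, each $\delta^{1/2}$-cap $\theta'$ of $S$ lies inside the Minkowski sum of essentially a unique unordered tuple $(\theta_1,\ldots,\theta_{n-1})$ of $\delta^{1/n}$-caps of $\Phi$, so $(f^{n-1})_{\theta'}\approx c_n(f_{\theta_1}\cdots f_{\theta_{n-1}})_{\theta'}$. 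H\"older's inequality $\|\prod_i f_{\theta_i}\|_{p_0}\le \prod_i\|f_{\theta_i}\|_p$ together with the factorization $\sum_{\mathrm{tuples}}\prod_i a_{\theta_i}=(\sum_\theta a_\theta)^{n-1}$ converts the right-hand side into an $\ell^{p_0}$-type bound, and the embedding $\ell^{p_0}\hookrightarrow\ell^2$ produces the desired $\ell^2$-aggregate.

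This direct chain introduces a fixed power loss $\delta^{-\gamma_n}$ with $\gamma_n>0$; to recover the $\delta^{-\epsilon}$ constant asserted in Theorem \ref{t2} one iterates the argument at intermediate scales $\delta_0=\delta^\alpha$, bootstrapping a self-referential estimate of the form $D(\delta)\lesssim \delta^{-\gamma_n}D(\delta_0)^\beta$ (with $D$ denoting the best decoupling constant) until the loss collapses. This induction-on-scales step, which is the main technical obstacle, parallels the standard Bourgain--Demeter bootstrap used for the paraboloid in \cite{BD3}. Once the $\ell^2$ decoupling is established at $p=4n-2$, the remaining range $2\le p\le 4n-2$ follows by interpolating with the trivial Plancherel case $p=2$.
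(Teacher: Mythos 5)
There is a genuine gap, in fact several. Your lifting to the sum hypersurface $S=\{\Phi(t_1)+\cdots+\Phi(t_{n-1})\}$ is a real idea (the paper uses exactly this in Proposition \ref{p2}), but it cannot deliver Theorem \ref{t2} in the way you describe. First, the identification $(f^{n-1})_{\theta'}\approx c_n(f_{\theta_1}\cdots f_{\theta_{n-1}})_{\theta'}$ fails: the parametrization $(t_1,\dots,t_{n-1})\mapsto\sum_i\Phi(t_i)$ degenerates on and near the diagonal, so a cap $\theta'$ of $S$ receives contributions from many non-transverse tuples, and the Fourier restriction of a product is not the product of restrictions. What survives is only a \emph{multilinear} statement for separated intervals $I_1,\dots,I_{n-1}$; carried out correctly this is precisely \eqref{we65}, which gives $(2(n+1),\frac{2(n+1)}{n-1},\frac12,n-1)$ decoupling --- caps at scale $\delta^{1/2}$ (not $\delta^{1/n}$) and exponent $2(n+1)<4n-2$ for $n\ge 3$. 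Passing from $\delta^{1/2}$-caps of $S$ back to $\delta^{1/n}$-arcs of $\Phi$ is a reverse-decoupling step that costs a further fixed power of $\delta$.

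Second, the claim that a self-referential estimate $D(\delta)\lesssim\delta^{-\gamma_n}D(\delta_0)^{\beta}$ ``collapses'' the fixed power loss is unsupported; iterating a relation with a fixed power loss at every scale accumulates the loss rather than removing it. The mechanism that actually removes power losses in the paper is the sharp $n$-linear $L^{2n}$ extension estimate for nondegenerate curves \eqref{we1}, combined with the multilinear--linear equivalence (Theorem \ref{wt1}) and the iteration \eqref{we9} between scales $\delta^{2^{-j}}$, where $L^2$-orthogonality at scale $\delta^{1/2}$ and the exponent $\kappa_p$ enter; the convergence condition $2(1-\kappa_p)<1$ is exactly what singles out $p=4n-2$. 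None of this machinery, or a substitute for it, appears in your argument. Finally, the closing step ``interpolate with $p=2$'' is explicitly false for decoupling into $\delta^{1/n}$-neighborhoods of arcs: the caps are curved tubes, and the last section of the paper exhibits a counterexample to such interpolation; the range $2\le p<4n-2$ has to be handled by other means (the $\kappa$-replacement trick and the low-exponent estimate \eqref{we10vrbhhybtugbtm689um896-it9v5it689itv90i}).
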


By using a limiting procedure (see for example the discussion in \cite{BD3} and \eqref{e11} below) one can replace $f$ with a sum of Dirac deltas and derive the following corollary.
\begin{corollary}
\label{corrrrr771}
Fix $\Phi$ as above and let $p\le 4n-2$. Then for each $\delta$-separated set $\Lambda$ of points on the curve $\Phi$ and each coefficients $a_\xi\in \C$ we have
\begin{equation}
\label{ropig90580vyunmv45r0-c489rt870-910-}
(\frac{1}{|B_R|}\int_{B_R}|\sum_{\xi\in\Lambda}a_\xi e(\xi\cdot x)|^pdx)^{1/p}\lesssim_\epsilon \delta^{-\epsilon}\|a_\xi\|_{l^2(\Lambda)},
\end{equation}
for each $\epsilon$ and each ball $B_R\subset \R^n$ of radius $R\gtrsim \delta^{-n}$.
\end{corollary}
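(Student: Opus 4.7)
The plan is to apply Theorem \ref{t2} not at the scale $\delta$ but at the finer scale $\delta' = \delta^n$. At this finer scale, the tubes $\theta \in \P_{\delta^n}$ are $\delta^n$-neighborhoods of arcs $\Phi(I)$ of parameter length $(\delta^n)^{1/n} = \delta$; since $\Phi$ is nondegenerate, and hence locally bi-Lipschitz, each such tube meets the $\delta$-separated set $\Lambda$ in only $O(1)$ points. This scale matching is precisely why the hypothesis $R \gtrsim \delta^{-n}$ is imposed: I need a smooth mollifier whose Fourier transform lives in a ball of radius $\lesssim \delta^n$, and then $1/R \le \delta^n$ is exactly what makes the Fourier support sit inside $\A_{\delta^n}$.

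Concretely, I pick a Schwartz function $\eta$ with $\eta \equiv 1$ on $B(0,1)$ and $\widehat{\eta}$ supported in $B(0,1)$, and set $\eta_R(x) = \eta(x/R)$. I then define
$$f(x) = \eta_R(x) \sum_{\xi \in \Lambda} a_\xi e(\xi \cdot x).$$
Then $\widehat{f}$ is supported in $\Lambda + B(0, 1/R) \subset \A_{\delta^n}$ by the hypothesis on $R$. Theorem \ref{t2} applied at scale $\delta^n$ yields
$$\|f\|_p \lesssim_\epsilon \delta^{-\epsilon} \Bigl( \sum_{\theta \in \P_{\delta^n}} \|f_\theta\|_p^2 \Bigr)^{1/2},$$
after absorbing a harmless factor of $n$ in the exponent into $\epsilon$. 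Since $\eta_R \equiv 1$ on $B_R$, the left-hand side dominates $\bigl(\int_{B_R} |\sum_\xi a_\xi e(\xi \cdot x)|^p\, dx\bigr)^{1/p}$.

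It then remains to estimate each $\|f_\theta\|_p$. Using a smooth partition of unity subordinate to a mild enlargement $\{\theta^*\}$ of the tube collection (to absorb the Schwartz tails of the bumps $\widehat{\eta_R}(\cdot-\xi)$ that straddle adjacent $\theta$), one writes $f_\theta$ as $\eta_R(x)\sum_{\xi \in \Lambda \cap \theta^*} a_\xi e(\xi \cdot x)$ up to negligible error. Since $|\Lambda \cap \theta^*| = O(1)$, Cauchy-Schwarz on $O(1)$ terms gives
$$\|f_\theta\|_p \lesssim R^{n/p} \Bigl( \sum_{\xi \in \Lambda \cap \theta^*} |a_\xi|^2 \Bigr)^{1/2}.$$
Squaring, summing over $\theta$, and exploiting the bounded overlap of the $\theta^*$ yields $\sum_\theta \|f_\theta\|_p^2 \lesssim R^{2n/p}\|a_\xi\|_{l^2(\Lambda)}^2$. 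Dividing through by $|B_R|^{1/p} \sim R^{n/p}$ produces \eqref{ropig90580vyunmv45r0-c489rt870-910-}. The only subtlety is the bump-straddling at tube boundaries, which I expect to be a routine partition-of-unity technicality; the substantive input is the choice of decoupling scale $\delta^n$, which matches the $\delta$-separation of $\Lambda$ against the arc-length of each tube.
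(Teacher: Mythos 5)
Your argument is correct and is essentially the paper's own (one-line) proof made explicit: the paper simply invokes ``a limiting procedure'' replacing $f$ by a sum of Dirac deltas, and your mollification at scale $1/R\lesssim\delta^n$ followed by Theorem \ref{t2} at scale $\delta^n$ (so that each tube $\theta$ corresponds to a parameter interval of length $\delta$ and hence captures $O(1)$ points of $\Lambda$) is exactly that procedure. The only slip is that no Schwartz $\eta$ with compactly supported $\widehat\eta$ can satisfy $\eta\equiv 1$ on a ball (such an $\eta$ would be entire and hence constant), but replacing this by the standard requirement $\eta\gtrsim 1$ on $B(0,1)$ --- precisely the property the paper demands of its weights $w_{B_R}$ --- changes nothing in your estimates.
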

\bigskip

In particular, by applying this to (a rescaled version of) the curve
$$\Phi(t)=(t,t^2,\ldots,t^n)$$ we get
\begin{equation}
\label{ropig90580vyunmv45r0-c489rt870-910-later}
(\frac{1}{|B_R|}\int_{B_R}|\sum_{\xi\in\Lambda}a_\xi e(\xi\cdot x)|^pdx)^{1/p}\lesssim_\epsilon N^{-\epsilon}\|a_\xi\|_{l^2(\Lambda)},
\end{equation}
for $p\le 4n-2$, each 1-separated set $\Lambda$ of points on the curve $\Phi$ ($0\le t\le \infty$) with size $N$ and each $R\gtrsim N^n$.

Let us now explore an immediate consequence. Given an integer $k\ge 2$ we define the  $k$-energy of $\Lambda$ as
$$\B_k(\Lambda)=|\{(\lambda_1,\ldots,\lambda_{2k})\in \Lambda^{2k}:\;\lambda_1+\ldots+\lambda_k=\lambda_{k+1}+\ldots+\lambda_{2k}\}|.$$
By letting $R\to\infty$ in \eqref{ropig90580vyunmv45r0-c489rt870-910-later} with $p=4n-2$ and $a_\xi=1$, we immediately get that $$\B_{2n-1}(\Lambda)\lesssim_\epsilon \Lambda^{2n-1+\epsilon}$$  for each $\Lambda$ as above. In particular, we have
$$\B_{2n-1}(\{(l,l^2,\ldots,l^n):\;l=1,2,\ldots,N\})\lesssim_\epsilon N^{2n-1+\epsilon}.$$
This is a special case of Vinogradov's mean value theorem, due to (and significantly improved by the recent work of) Wooley, see for example \cite{Woo} and the references therein.
Our method however shows that the integer case is not special, but is rather a particular case of a larger phenomenon. Note also that our method allows for arbitrary coefficients $a_\xi$ in \eqref{ropig90580vyunmv45r0-c489rt870-910-later}.

Vinogradov's mean value theorem conjectures that \eqref{ropig90580vyunmv45r0-c489rt870-910-later} should hold in the integer case, with $a_\xi=1$, for $p$ as large as $n(n+1)$. It is possible that  \eqref{ropig90580vyunmv45r0-c489rt870-910-later} also holds for $p\le n(n+1)$, for arbitrary coefficients and frequency points $\Lambda$. However, our method does not currently seem to shed any light on this issue.
Further applications of variants of inequality \eqref{ropig90580vyunmv45r0-c489rt870-910-later} to number theory are presented in \cite{Bo}.

\begin{ack*}

The second author is indebted to Andrea Nahmod for bringing the question about the hyperbolic Schr\"odinger equation to his attention and for stimulating discussions on the topic.

\end{ack*}

\section{$l^p$ decouplings for hypersurfaces}
\bigskip

In this section we present the proof of Theorem \ref{t1}. We start by observing that the induction on scales argument from the last section in \cite{BD3} allows us to focus on the hypersurfaces $H^{n-1}_\upsilon$.

The proof of Theorem \ref{t1} for $H^{n-1}_\upsilon$  will be done in two separate stages. First, we develop the multilinear decoupling theory and show that it is essentially equivalent to its linear counterpart. Then we finish the proof by using a bootstrapping argument that relies on the equivalence between the linear and multilinear decoupling.

\subsection{Linear and multilinear decoupling}
\medskip

Let $g:H^{n-1}_\upsilon\to\C$. For a cap $\tau$ on $H^{n-1}_\upsilon$ we let $g_\tau=g1_\tau$ be the (spatial) restriction of $g$ to $\tau$.
We denote by $\pi:H^{n-1}_\upsilon\to [-1/2,1/2]^{n-1}$ the projection map and by $d\sigma$ the natural surface measure on $H^{n-1}_\upsilon$.
\begin{definition}
We say that the caps $\tau_1,\ldots,\tau_n$ on $H^{n-1}_\upsilon$ are $\nu$-transverse if the volume of the parallelepiped spanned by any unit normals $n_i$ at $\tau_i$ is greater than $\nu$.
\end{definition}

In the following, the norm $\|f\|_{L^p(w_{B_R})}$ will refer to the weighted $L^p$ integral $$(\int_{\R^n}|f(x)|^pw_{B_R}(x)dx)^{1/p}$$
for some positive weight $w_{B_R}$ which is Fourier supported in $B(0,\frac{1}{R})$ and satisfies
\begin{equation}
\label{wnret78u-0943mt7-w,-,1ir8gnrnfyqerbtf879wumweryermf,}
1_{B_R}(x)\lesssim w_{B_R}(x)\le(1+\frac{|x-c(B_R)|}{R})^{-10n}.
\end{equation}

We denote by $C_{p,n,\upsilon}(\delta,\nu)$ the smallest constant such that
$$\|(\prod_{i=1}^n|\widehat{g_{\tau_i}d\sigma}|)^{1/n}\|_{L^p(B_{\delta^{-1}})}\le C_{p,n,\upsilon}(\delta,\nu)\left[\prod_{i=1}^n(\sum_{\theta:\;\delta^{1/2}-\text{cap}\atop{\theta\subset\tau_i}}\|\widehat{g_{\theta}d\sigma}\|_{L^p(w_{B_{\delta^{-1}}})}^p)^{1/p}\right]^{1/n},$$
for each $\nu$-transverse caps $\tau_i\subset H^{n-1}_\upsilon$, each $\delta^{-1}$ ball $B_{\delta^{-1}}$ and each $g:H^{n-1}_\upsilon\to\C$.

Let also $K_{p,n,\upsilon}(\delta)$ be the smallest constant such that
$$\|\widehat{gd\sigma}\|_{L^p(B_{\delta^{-1}})}\le K_{p,n,\upsilon}(\delta)(\sum_{\theta:\delta^{1/2}-\text{cap}}\|\widehat{g_{\theta}d\sigma}\|_{L^p(w_{B_{\delta^{-1}}})}^p)^{1/p},$$
for each $g:H^{n-1}_\upsilon\to\C$ and each $\delta^{-1}$ ball $B_{\delta^{-1}}$.
\bigskip

In order to prove Theorem \ref{t1} it suffices to prove that $K_{p,n,\upsilon}(\delta)\lesssim_\epsilon\delta^{\frac{n}p-\frac{n-1}{2}-\epsilon}$ for $p\ge\frac{2(n+1)}{n-1}$.
In fact, if we denote by $K_{p,n,\upsilon}^{(1)}(\delta)$  the smallest constant such that
$$\|f\|_p\le K_{p,n,\upsilon}^{(1)}(\delta)(\sum_{\theta\in \P_\delta}\|f_\theta\|_p^p)^{1/p}$$
then we have $K_{p,n,\upsilon}^{(1)}(\delta)\sim K_{p,n,\upsilon}(\delta)$. We give a brief sketch on why this is the case and leave the details to the interested reader. The precise argument can be carried out using mollifications (see for example the proof of Lemma 2.2 in \cite{BCT}).

We start with the inequality $K_{p,n,\upsilon}(\delta)\lesssim K_{p,n,\upsilon}^{(1)}(\delta)$. It suffices to note that
$$\|\widehat{gd\sigma}\|_{L^p(B_{\delta^{-1}})}\lesssim \|\widehat{gd\sigma}\|_{L^p(w_{B_{\delta^{-1}}})}$$
and that $f:=(\widehat{gd\sigma})w_{B_{\delta^{-1}}}$ has the Fourier transform supported in $\A_\delta$.

To see the inequality $K_{p,n,\upsilon}^{(1)}(\delta)\lesssim K_{p,n,\upsilon}(\delta)$, first note that it suffices to prove that for  $f$ as in Theorem \ref{t1} and for $B$ the ball centered at the origin with radius $\delta^{-1}$ we have
$$(\int|fv_B|^p)^{1/p}\lesssim K_{p,n,\upsilon}(\delta)(\sum_{\theta\in \P_\delta}\|f_\theta\|_{L^p(w_B)}^p)^{1/p}.$$
Here $v_B$ is an appropriate weight with the same properties as $w_B$.
Indeed, once this is established, by translation invariance it will hold on each ball with radius $\delta^{-1}$. Then raise to the power $p$ and sum over an appropriate family of balls. Next, note that $\widehat{fv_B}=\widehat{f}*\widehat{v_B}$ is essentially constant at scale $\delta$. We can thus assume from the start that $\widehat{f}$ is essentially constant at scale $\delta$.  To eliminate irrelevant technicalities, we will assume that for some $g:H^{n-1}_\upsilon\to\C$ we have
$$\widehat{f}(\xi)=g(\eta)$$
for each $\xi=\eta+te_n\in\A_\delta$ with $\eta\in H^{n-1}_\upsilon$ and $|t|\le\delta$. In other words, we only assume that $\widehat{f}$ is constant vertically.
It follows that $$f(x)=2\delta\widehat{gd\sigma}(x)$$
and
$$f_\theta(x)=2\delta\widehat{g_\theta d\sigma}(x),$$
where $g_\theta$ is the restriction of $g$ to the projection of $\theta$ onto $H^{n-1}_\upsilon$.
Finally,
$$\|f\|_{L^p(B)}=2\delta\|\widehat{gd\sigma}\|_{L^p(B)}\le 2\delta K_{p,n,\upsilon}(\delta)(\sum_{\theta:\delta^{1/2}-\text{cap}}\|\widehat{g_{\theta}d\sigma}\|_{L^p(w_{B_{\delta^{-1}}})}^p)^{1/p}=$$
$$=K_{p,n,\upsilon}(\delta)(\sum_{\theta\in \P_\delta}\|f_\theta\|_{L^p(w_{B_{\delta^{-1}}})}^p)^{1/p}$$
\bigskip

\bigskip

Returning to the linear and multilinear constants, H\"older's inequality gives
$$C_{p,n,\upsilon}(\delta,\nu)\le K_{p,n,\upsilon}(\delta).$$
We will show that the reverse inequality essentially holds true.
\begin{theorem}
\label{t4}
Fix $n\ge 3$, $\upsilon\in\{-1,1\}^{n-1}$ and let $p\ge 2$. Assume one of the following holds

(i) $n=3$

(ii) $n\ge 4$ and
\begin{equation}
\label{e42}
K_{p,n-2,\upsilon'}(\delta')\lesssim_\epsilon {\delta'}^{-\frac{n-3}{2}(\frac12-\frac1p)-\epsilon}
\end{equation}
 for each $\delta'>0$, $\upsilon'\in\{-1,1\}^{n-3}$ and  each $\epsilon>0$.

Then for each $0<\nu\le 1$  there is $\epsilon(\nu)$ with $\lim_{\nu\to 0}\epsilon (\nu)=0$ and $C_\nu$ such that
\begin{equation}
\label{kfjmgu453i340y8b90t4my89h0,iwcv9tuh89gur9vfutr78930-34it90tr=0copwq=-pz}
K_{p,n,\upsilon}(R^{-1})\le C_\nu R^{\epsilon(\nu)}\sup_{R^{-1}\le \delta\le 1 }(\delta R)^{\frac{n-1}{2}(\frac12-\frac1p)}C_{p,n,\upsilon}(\delta,\nu)
\end{equation}
for each $R>1$.
\end{theorem}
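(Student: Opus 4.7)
The plan is to run the now-standard Bourgain--Guth broad/narrow dichotomy on $H^{n-1}_\upsilon$, setting up a self-improving recursion for $K_{p,n,\upsilon}$. Fix a large parameter $K=K(\nu)$ to be sent to $\infty$ as $\nu\to 0$, and partition $H^{n-1}_\upsilon$ into caps $\alpha$ of diameter $\sim K^{-1}$. Writing $f=\sum_\alpha f_\alpha$, the central observation is that at each point $x$ either (B) there exist $n$ caps $\alpha_1(x),\ldots,\alpha_n(x)$ whose unit normals span a parallelepiped of volume $\ge\nu$, with $|f_{\alpha_i}(x)|\gtrsim K^{-C_n}|f(x)|$; or (N) all caps $\alpha$ with $|f_\alpha(x)|$ comparable to $\max_\beta|f_\beta(x)|$ lie in a $K^{-1}$-neighborhood of some affine hyperplane $V(x)\subset\R^n$. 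This yields the pointwise bound
\begin{equation*}
|f(x)|\lesssim K^{C_n}\Bigl(\prod_{i=1}^n|f_{\alpha_i(x)}(x)|\Bigr)^{1/n}+K^{C_n}\Bigl|\sum_{\alpha\subset V(x)+O(K^{-1})}f_\alpha(x)\Bigr|.
\end{equation*}

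For the broad term, raising to the $p$-th power, integrating over $B_{R}$, and summing over the $O(K^{O(1)})$ choices of $\nu$-transverse $n$-tuples of $K^{-1}$-caps produces a contribution of size $K^{O(1)}C_{p,n,\upsilon}(R^{-1},\nu)$ applied to the $R^{-1/2}$-cap decomposition, matching the right-hand side of the desired bound at $\delta=R^{-1}$. For the narrow term, the key geometric input (emphasized in the introduction) is that a hyperplane section of $H^{n-1}_\upsilon$ has at most one vanishing principal curvature, even in the hyperbolic regime. Thus, modulo an affine change of variables, such a section is a cylinder with a single flat direction over a hypersurface of the type $H^{n-3}_{\upsilon'}$ for some $\upsilon'\in\{-1,1\}^{n-3}$. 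Cylindrical (Minkowski) decoupling in the flat direction is trivial, while the inductive hypothesis (ii) supplies the sharp $l^p$-decoupling constant for the curved $(n-3)$-dimensional factor; for $n=3$ the trace is one-dimensional, and the narrow term is handled by trivial decoupling of a neighborhood of a line or a $C^2$ curve. Combined, this gives $\|\sum_{\alpha\subset V+O(K^{-1})} f_\alpha\|_p\lesssim_\epsilon K^{\epsilon}(\sum_\alpha\|f_\alpha\|_p^p)^{1/p}$, with no $K$-growth beyond $\epsilon$-losses.

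To close the recursion one invokes parabolic rescaling: each $K^{-1}$-cap on $H^{n-1}_\upsilon$, together with its $R^{-1}$-vertical neighborhood, is affinely equivalent to a full cap of $H^{n-1}_\upsilon$ with its $K^2 R^{-1}$-neighborhood, so the linear decoupling of $f_\alpha$ into $R^{-1/2}$-subcaps becomes a linear decoupling of a rescaled function at scale $\delta=K^2R^{-1}$. The $L^p$-norm dilation and the change of the sub-cap counting combine to produce the explicit factor $(\delta R)^{\frac{n-1}{2}(\frac12-\frac1p)}$ with $\delta=K^{-2}$. Putting the broad and narrow contributions together yields the recursion
\begin{equation*}
K_{p,n,\upsilon}(R^{-1})\le C\,K^{O(1)}C_{p,n,\upsilon}(R^{-1},\nu)+C\,K^{O(1)}(K^2R^{-1}\cdot R)^{\frac{n-1}{2}(\frac12-\frac1p)}K_{p,n,\upsilon}(K^2R^{-1}).
\end{equation*}
Iterating $J\sim\log R/(2\log K)$ times raises the residual scale from $R^{-1}$ up to $\sim 1$, where $K_{p,n,\upsilon}$ is absorbed into an absolute constant; the multilinear constant is picked up at all intermediate dyadic scales $\delta\in[R^{-1},1]$, producing the supremum in the conclusion. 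The cumulative $K^{O(1)}$ losses over the $J$ iterations total $K^{O(J)}=R^{O(1)/\log K}=R^{\epsilon(\nu)}$, with $\epsilon(\nu)\to 0$ as $K\to\infty$ (equivalently $\nu\to 0$).

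The principal obstacle is step (N) in the hyperbolic regime: unlike the elliptic case, hyperplane sections of $H^{n-1}_\upsilon$ can genuinely carry a degenerate principal curvature, so naive lower-dimensional nondegenerate decoupling fails. Exploiting the precise structural fact that at most one such curvature can vanish, and combining cylindrical decoupling in the resulting flat direction with hypothesis (ii) for the remaining curved $(n-3)$-dimensional factor, is exactly what makes the narrow term admissible and drives the whole scheme.
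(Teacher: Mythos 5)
Your overall architecture (Bourgain--Guth broad/narrow dichotomy, the observation that a hyperplane section of $H^{n-1}_\upsilon$ has at most one degenerate principal curvature, cylindrical decoupling in the flat direction plus hypothesis (ii) for the curved factor, parabolic rescaling, iteration) is the same as the paper's. However, there is a genuine error in your treatment of the narrow term, and it propagates into a misidentification of where the factor $(\delta R)^{\frac{n-1}{2}(\frac12-\frac1p)}$ comes from. You claim $\|\sum_{\alpha\subset V+O(K^{-1})}f_\alpha\|_p\lesssim_\epsilon K^\epsilon(\sum_\alpha\|f_\alpha\|_p^p)^{1/p}$, i.e.\ a decoupling of the narrow part into $K^{-1}$-caps with only $\epsilon$-losses. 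This is false. Already for $n=3$, $\upsilon=(1,-1)$ and the hyperplane $\xi_2=\xi_1$, the section $\pi^{-1}(V)\cap H^2_\upsilon$ is a straight line, and no nontrivial decoupling of a $K^{-1}$-neighborhood of a line segment is available: the best one can do is the trivial $l^p$ decoupling into $\sim K^{1/2}$ pieces of length $K^{-1/2}$, at cost $K^{\frac12-\frac1p}$. In general the narrow term only decouples into $K^{-1/2}$-caps, with constant $K^{\frac{n-1}{2}(\frac12-\frac1p)+\epsilon}$: the flat direction contributes $K^{\frac12-\frac1p}$ by trivial decoupling, and the curved $(n-3)$-dimensional factor contributes $K^{\frac{n-3}{2}(\frac12-\frac1p)+\epsilon}$ via hypothesis (ii) --- note that (ii) is an $l^p$ decoupling whose constant is a genuine power of $K$, not $K^\epsilon$ (only subcritical $l^2$ decouplings have $\delta^{-\epsilon}$ constants; you appear to have conflated the two normalizations).

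Because of this, your closing recursion is not the right one. Parabolic rescaling is an exact affine equivalence between the decoupling inequality for a $K^{-1}$-cap with its $R^{-1}$-neighborhood and that for a unit cap with its $K^2R^{-1}$-neighborhood; the Jacobians cancel on both sides and no factor $(\delta R)^{\frac{n-1}{2}(\frac12-\frac1p)}$ is produced there. That factor in the statement of the theorem exists precisely to absorb the accumulated $K^{\frac{n-1}{2}(\frac12-\frac1p)}$ losses from the narrow steps. The correct iteration therefore has two kinds of steps --- one that lowers the cap scale by $K$ with $O(1)$ loss, and a narrow step that lowers it only by $K^{1/2}$ at cost $K^{\frac{n-1}{2}(\frac12-\frac1p)+\epsilon}$ --- and a given branch terminates in the multilinear constant at whatever intermediate scale $\delta\in[R^{-1},1]$ it has reached after $m_1$ steps of the first kind and $m_2$ of the second. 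The bound $K^{\frac{n-1}{2}(\frac12-\frac1p)m_2}\le K^{\frac{n-1}{2}(\frac12-\frac1p)(m_2+2m_1)}=(\delta R)^{\frac{n-1}{2}(\frac12-\frac1p)}$ is exactly what produces the weighted supremum over $\delta$ in the conclusion. Without repairing the narrow-term estimate and the two-speed bookkeeping, the proof does not close.
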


The rest of this subsection will be devoted to proving this theorem. Before we embark in the proof we explain its role and numerology.

\begin{remark}
\label{addedlaterrem}
First, note that an equivalent  reformulation of Theorem \ref{t1} is
\begin{equation}
\label{e15}
K_{p,n,\upsilon}(\delta)\lesssim_\epsilon \delta^{-\frac{n-1}{2}(\frac12-\frac1p)-\epsilon},
\end{equation}
valid in the subcritical range $2\le p\le \frac{2(n+1)}{n-1}$. This can be seen by interpolating the trivial bound for $p=2$ with the bound at the critical index $p=\frac{2(n+1)}{n-1}.$

The proof of Theorem \ref{t1} in the next subsection will rely on induction on $n$. The case $n=2$ is already known. Theorem \ref{t4} from above will apply unconditionally  in the case $n=3$, and together with the bootstrapping argument will prove Theorem \ref{t1} when $n=3$. Once Theorem \ref{t1} is established in dimension $n-2$ for some $n\ge 4$, equation \eqref{e15} will show that requirement (ii) in Theorem \ref{t4} is satisfied for $\frac{2(n+1)}{n-1}<p<\frac{2(n-1)}{n-3}$. For such a $p$, Theorem \ref{t4} is again applicable in dimension $n$. Thus, when combined with the bootstrapping argument, it will lead to the proof of Theorem \ref{t1} in dimension $n$, by letting $p$ approach the critical index $\frac{2(n+1)}{n-1}$. We note that the increment in the induction step is 2, rather than 1.

Finally, we point out that the presence of the factor $(\delta R)^{\frac{n-1}{2}(\frac12-\frac1p)}$ in \eqref{kfjmgu453i340y8b90t4my89h0,iwcv9tuh89gur9vfutr78930-34it90tr=0copwq=-pz} is rather harmless. To give some intuition on why this is the case, we oversimplify the picture just a little bit and assume that for fixed $p>\frac{2(n+1)}{n-1}$, $\upsilon,\nu$ we have
$$C_{p,n,\upsilon}(\delta,\nu)=C\delta^{-\eta}$$
for some $C,\eta>0$ and each $\delta>0$. We distinguish two cases. First, if $\eta\le \frac{n-1}2(\frac12-\frac1p)$, then we automatically also have $\eta<\frac{n-1}{2}-\frac{n}{p}$. Using these, \eqref{kfjmgu453i340y8b90t4my89h0,iwcv9tuh89gur9vfutr78930-34it90tr=0copwq=-pz} immediately gives
$$K_{p,n,\upsilon}(R^{-1})\lesssim_\nu R^{\epsilon(\nu)}R^{\frac{n-1}{2}-\frac{n}{p}}$$
and Theorem \ref{t1} follows, by letting $\nu\to 0$. In the second case, if $\eta>\frac{n-1}2(\frac12-\frac1p)$, then
$$\sup_{R^{-1}\le \delta\le 1 }(\delta R)^{\frac{n-1}{2}(\frac12-\frac1p)}C_{p,n,\upsilon}(\delta,\nu)\le C_{p,n,\upsilon}(R^{-1},\nu)$$
and the honest equivalence between multilinear and linear decouplings is established. We will carry out the formal argument in the following subsection.
\end{remark}
\bigskip

We now start the proof of Theorem \ref{t4} with  a lemma for paraboloids that are allowed to have one small (possibly zero) principal curvature. The motivation behind this consideration will be explained in the end of the proof of Proposition \ref{hcnyf7yt75ycn8u32r8907n580-9=--qc mvntvu5n8t}.
\begin{lemma}
\label{l2}
Let $n\ge 3$. Fix $\upsilon_1,\ldots,\upsilon_{n-2}\in\{-1,1\}$ and let $|a|\lesssim 1$ be arbitrary, possibly zero. Let $\P_\delta$ be a partition of the neighborhood $\A_\delta$ associated with the hypersurface $H^{n-1}_{(\upsilon_1,\ldots,\upsilon_{n-2},a)}$.

If $\supp(\hat{f})\subset \A_\delta$
then for $p\ge 2$ we have, uniformly over the parameter $|a|\lesssim 1$
$$
\|f\|_p\lesssim \delta^{-\frac12+\frac1p}K_{p,n-1,(\upsilon_1,\ldots,\upsilon_{n-2})}(\delta)(\sum_{\theta\in \P_\delta}\|f_\theta\|_p^p)^{1/p}.
$$
\end{lemma}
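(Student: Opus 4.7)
The plan is to exploit the small (possibly zero) curvature in the $\xi_{n-1}$-direction by partitioning into $\xi_{n-1}$-slabs, straightening the curvature and applying cylindrical $(n-1)$-dimensional decoupling on each slab, and then combining the slabs by a flat decoupling.

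First, partition $[-1/2,1/2]$ into $N\sim \delta^{-1/2}$ disjoint intervals $J_j$ of length $\delta^{1/2}$ centered at $c_j$, and decompose $f=\sum_j f_j$ where $f_j$ is the (smoothly cut off) Fourier restriction of $f$ to the slab $\{\xi_{n-1}\in J_j\}$. On $J_j$ the expansion $a\xi_{n-1}^2 = ac_j^2+2ac_j(\xi_{n-1}-c_j)+a(\xi_{n-1}-c_j)^2$ together with $|a|\lesssim 1$ shows that the quadratic remainder is $O(\delta)$. Hence the linear Fourier-space change of variables $\xi_n\mapsto\xi_n-2ac_j\xi_{n-1}$---an $L^p$-isometry in physical space---sends $f_j$ to $\tilde f_j$ whose Fourier transform is supported in a $C\delta$-thickening of the cylinder $H^{n-2}_{(\upsilon_1,\ldots,\upsilon_{n-2})}\times J_j$. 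For each fixed $x_{n-1}$, the slice $\tilde f_j(\cdot,x_{n-1},\cdot):\R^{n-1}\to\C$ has Fourier support in $\A_\delta(H^{n-2}_{(\upsilon_1,\ldots,\upsilon_{n-2})})$, so the $(n-1)$-dimensional decoupling applies slicewise. Integrating in $x_{n-1}$ and using the identity $(F_{\theta'\times J_j})(\cdot,x_{n-1},\cdot)=F(\cdot,x_{n-1},\cdot)_{\theta'}$ then yields, after undoing the tilt,
\[
\|f_j\|_p\le K_{p,n-1,(\upsilon_1,\ldots,\upsilon_{n-2})}(\delta)\Bigl(\sum_{\theta\in\P_\delta,\;\theta\subset J_j}\|f_\theta\|_p^p\Bigr)^{1/p}.
\]

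To sum over slabs I combine the Plancherel orthogonality $\|f\|_2^2=\sum_j\|f_j\|_2^2$ with the trivial bound $\|f\|_\infty\le N\max_j\|f_j\|_\infty$. Interpolating via Riesz--Thorin applied to the operator $(g_j)\mapsto\sum_j g_j$ between the corresponding $\ell^p L^p\to L^p$ bounds, restricted to the subspace of sequences with the appropriate Fourier support, yields for every $p\ge 2$ the flat decoupling estimate
\[
\|f\|_p\lesssim N^{1-2/p}\Bigl(\sum_j\|f_j\|_p^p\Bigr)^{1/p}=\delta^{-\frac12+\frac1p}\Bigl(\sum_j\|f_j\|_p^p\Bigr)^{1/p},
\]
which together with the previous inequality proves the lemma.

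The main technical point is the flat decoupling step. For an even integer $p=2k$ it admits a direct proof by expanding $\|\sum_j f_j\|_{2k}^{2k}=\|(\sum_j f_j)^k\|_2^2$ via Plancherel, grouping products $f_{j_1}\cdots f_{j_k}$ by the sum of slab-indices $j_1+\cdots+j_k$ (groups with different sums being Fourier-disjoint) and applying Cauchy--Schwarz inside each group; the general $p\ge 2$ case then follows by log-convexity of $L^p$ norms. The uniformity in $|a|\lesssim 1$ (including the cylindrical case $a=0$) is automatic, since the straightening only introduces an $O(\delta)$ remainder independent of $a$.
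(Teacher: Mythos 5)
Your proof is correct and follows essentially the same route as the paper's: decompose into $\delta^{1/2}$-slabs in the flat ($a$) direction, combine the slabs by the trivial $l^p$ decoupling with constant $\delta^{-\frac12+\frac1p}$, and on each slab use a shear (the physical-space version of which is the paper's map $(x_1,x_2,x_3)\mapsto(x_1,x_2+2\alpha a x_3,x_3)$) to reduce to the $\delta$-neighborhood of a cylinder over $H^{n-2}_{(\upsilon_1,\ldots,\upsilon_{n-2})}$, where Fubini yields the $(n-1)$-dimensional decoupling constant. The only difference is the order of the two steps, which is immaterial.
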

Before we embark in the proof of the lemma, we give some heuristics on numerology. A simple $L^p$ orthogonality principle asserts that given a ``suitable'' family of pairwise disjoint subsets $S_1,\ldots,S_{M}$ in $\R^n$ we have
\begin{equation}
\label{e12}
\|f\|_p\lesssim_p M^{1-\frac2p}(\sum_{i=1}^{M} \|f_{S_i}\|_p^p)^{1/p}
\end{equation}
for each $2\le p\le \infty$ and each  $f$ Fourier supported in the union of the $S_i$. We may refer to this as being {\em trivial $l^p$ decoupling}. The word ``suitable'' will refer to the situation when \eqref{e12} can be recovered by interpolating the trivial $L^2$ and $L^\infty$ estimates. One example includes the case when $S_i$ are balls or cubes with pairwise disjoint doubles. We omit the details.

If the sets $S_i$ are not subjected to additional requirements, the universal exponent $1-\frac2p$ of $M$ is sharp. To see this, it suffices to consider the case when $S_i$ are equidistant unit balls with collinear centers.  However, this exponent becomes smaller when geometry is favorable. For example, the $L^p$ decoupling inequality \eqref{e15} corresponds to $M\sim \delta^{-\frac{n-1}{2}}$, and the exponent there is $\frac12-\frac1p$,  half of the universal one. The absence of curvature is an enemy, and one expects a penalty of $\delta^{\frac1{2p}-\frac14}$ for each zero principal curvature.  For example, when $a$ is small (possibly zero),  $H^{n-1}_{(\upsilon_1,\ldots,\upsilon_{n-2},a)}$ has a decoupling constant  $\delta^{\frac1{2p}-\frac14}$ larger than in the case $a\sim 1$.
\bigskip

\begin{proof}
The proof is rather standard, we sketch it briefly. The case $n=3$ is entirely typical, we prefer it only to simplify the notation. We can of course also assume $\upsilon_1=1$. By first performing the trivial decoupling \eqref{e12} in the direction of $e_2$ (which corresponds to the entry $a$), it suffices to prove that for each $\alpha\in [-\frac12,\frac12]$ we have
\begin{equation}
\label{e13}
\|\sum_{\theta\in \P_{\delta,\alpha}}f_\theta\|_p\lesssim K_{p,2,1}(\delta)(\sum_{\theta\in \P_{\delta,\alpha}}\|f_\theta\|_p^p)^{1/p},
\end{equation}
where $\P_{\delta,\alpha}$ consists of those $\theta\in \P_\delta$ that intersect the parabola
$$\{(\xi_1,\alpha,\xi_3):\xi_3=\xi_1^2+a\alpha^2\}.$$

We next show how a standard parabolic change of coordinates will allow us to assume $\alpha=0$.
It is easy to see (the reader is again referred to \cite{BD3}) that \eqref{e13} is equivalent with
$$\int_{B_{\delta^{-1}}}|\int_{[-1/2,1/2]\times [\alpha,\alpha+\delta^{1/2}]}f(\xi_1,\xi_2)e(x_1\xi_1+x_2\xi_2+x_3(\xi_1^2+a\xi_2^2))d\xi_1d\xi_2|^pdx_1dx_2dx_3\lesssim $$$$ K_{p,2,1}(\delta)^p \sum_{I:\delta^{1/2}-\text{interval}}\int_{B_{\delta^{-1}}}|\int_{I\times [\alpha,\alpha+\delta^{1/2}]}f(\xi_1,\xi_2)e(x_1\xi_1+x_2\xi_2+x_3(\xi_1^2+a\xi_2^2))d\xi_1d\xi_2|^pdx_1dx_2dx_3$$
for each $B_{\delta^{-1}}$. It is now rather immediate that we can assume $\alpha=0$, since the image of $B_{\delta^{-1}}$ under the transformation
$$(x_1,x_2,x_3)\mapsto (x_1,x_2+2\alpha a x_3,x_3)$$
is roughly $B_{\delta^{-1}}$.

Note however that all $\theta\in\P_{\delta,0}$ lie in the $\delta$ neighborhood of the cylinder
$$\{(\xi_1,\xi_2,\xi_3):\xi_3=\xi_1^2\},$$
and \eqref{e13} follows immediately from Fubini.
\end{proof}
\bigskip

A simple induction on scales similar to the one in Section 7 in \cite{BD3} allows us to extend the previous lemma to arbitrary hypersurfaces with (at least) $n-2$ principal  curvatures bounded away from zero.

\begin{lemma}
\label{l7}
 Let $n\ge 3$ and $p\ge 2$. Let $S$ be a $C^2$ compact hypersurface in $\R^n$ which at any given point has at least $n-2$ principal curvatures with magnitudes $\sim 1$, while the remaining one is $\lesssim 1$.  Let as usual $\P_\delta$ be a partition of the neighborhood $\A_\delta$ associated $S$.
Assume that for each $\delta'>0$
$$\max_{\upsilon\in\{-1,1\}^{n-2}}K_{p,n-1,\upsilon}(\delta')\lesssim_{\epsilon}{\delta'}^{-\frac{n-2}{2}(\frac12-\frac1p)-\epsilon}.$$

Then for each $\delta$ and each $\supp(\hat{f})\subset \A_\delta$
we have
$$
\|f\|_p\lesssim_{\epsilon}{\delta}^{-\frac{n}{2}(\frac12-\frac1p)-\epsilon}(\sum_{\theta\in \P_\delta}\|f_\theta\|_p^p)^{1/p}.
$$
\end{lemma}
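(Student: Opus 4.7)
The plan is to use Lemma \ref{l2} as the base case and extend it to general $C^2$ hypersurfaces via a standard induction on scales. First observe that Lemma \ref{l2}, combined with the hypothesized bound $K_{p,n-1,\upsilon'}(\delta')\lesssim_\epsilon{\delta'}^{-\frac{n-2}{2}(\frac12-\frac1p)-\epsilon}$, already yields the conclusion of Lemma \ref{l7} in the special case that $S$ is itself a paraboloid of the form $H^{n-1}_{(\upsilon_1,\ldots,\upsilon_{n-2},a)}$ with $|a|\lesssim 1$: the exponents combine as $-\frac12+\frac1p-\frac{n-2}{2}(\frac12-\frac1p)-\epsilon=-\frac{n}{2}(\frac12-\frac1p)-\epsilon$, as required.

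For a general $C^2$ hypersurface $S$ with the stated curvature profile, I would fix an intermediate scale $\rho\in(\delta,1)$ and partition $S$ into caps $\sigma$ of diameter $\rho^{1/2}$. Near each such $\sigma$, writing $S$ as the graph of a $C^2$ function $\phi_\sigma$ and taking the quadratic Taylor polynomial of $\phi_\sigma$ about the center of $\sigma$ produces a paraboloid $P_\sigma$ whose Hessian inherits the curvature profile of $S$ (at least $n-2$ eigenvalues of magnitude $\sim 1$, at most one arbitrary). After diagonalizing and isotropically rescaling the tangential directions, $P_\sigma$ takes exactly the form $H^{n-1}_{(\upsilon_1,\ldots,\upsilon_{n-2},a_\sigma)}$ with $|a_\sigma|\lesssim 1$ covered by Lemma \ref{l2}. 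The Taylor remainder is $O(\rho^{3/2})$ on $\sigma$, and under the parabolic rescaling adapted to $\sigma$ that sends $\sigma$ to a unit-sized region and $\A_\delta(S\cap\sigma)$ to $\A_{\delta/\rho}(S'_\sigma)$, it becomes $O(\rho^{1/2})$ in the normal direction. Provided $\rho^{1/2}\ll\delta/\rho$ (equivalently $\rho\ll\delta^{2/3}$), the discrepancy between $S'_\sigma$ and $P_\sigma$ is invisible at Fourier scale $\delta/\rho$, so Lemma \ref{l2} applied to $P_\sigma$ at that scale supplies the fine decoupling of $f_\sigma$ into $\{f_\theta:\theta\subset\sigma\}$ with constant $\lesssim_\epsilon(\delta/\rho)^{-\frac{n}{2}(\frac12-\frac1p)-\epsilon}$.

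For the coarse decoupling of $f$ into the $f_\sigma$'s I would use that $\A_\delta(S)\subset\A_\rho(S)$, so that Lemma \ref{l7} at the coarser scale $\rho$, whose best constant I denote $D(\rho)$ (uniformly over admissible $S$), already applies to $S$ itself. Combining the two steps yields the multiplicative inequality
$$D(\delta)\lesssim_\epsilon D(\rho)\cdot(\delta/\rho)^{-\frac{n}{2}(\frac12-\frac1p)-\epsilon}.$$
Choosing $\rho=\delta^{1/2}$ both satisfies the approximation condition ($\delta^{3/4}\ll\delta^{1/2}$) and balances the two factors, so the recursion closes in $O(\log\log(1/\delta))$ steps starting from the trivial $D(1)=O(1)$, with the polylogarithmic accumulation of multiplicative constants harmlessly absorbed into the $\delta^{-\epsilon}$ factor.

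The hard part, I expect, is the bookkeeping rather than any new idea: verifying that the curvature hypothesis is preserved under the parabolic rescaling (it is, since uniform rescaling of the tangential directions multiplies all principal curvatures by the same factor, preserving the ratios and hence the ``at most one small'' condition), confirming that the osculating paraboloid at the center of $\sigma$ really has the form demanded by Lemma \ref{l2} after a tangential rotation and rescaling, and managing the per-step $\epsilon$-losses so they telescope into a single $\delta^{-\epsilon}$. All of this is routine and parallels the induction-on-scales carried out in Section 7 of \cite{BD3}.
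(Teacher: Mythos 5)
Your overall strategy --- coarse decoupling at an intermediate scale $\rho$ via the lemma's own constant for $S$, fine decoupling within each $\rho^{1/2}$-cap via the osculating paraboloid, Lemma \ref{l2} combined with the hypothesis and parabolic rescaling, then iteration of the resulting multiplicative recursion --- is exactly the paper's. But your choice of intermediate scale contradicts a condition you yourself state correctly. You derive that the osculating paraboloid $P_\sigma$ is an admissible substitute for $S$ on a cap $\sigma$ of diameter $\rho^{1/2}$ only if the Taylor remainder is dominated by the thickness $\delta$ of the neighborhood, i.e. $\rho^{3/2}\lesssim\delta$, equivalently $\rho\lesssim\delta^{2/3}$ (or $\rho^{1/2}\lesssim\delta/\rho$ after rescaling). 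You then take $\rho=\delta^{1/2}$, which violates this: $\delta^{1/2}\gg\delta^{2/3}$ for small $\delta$. Your check ``$\delta^{3/4}\ll\delta^{1/2}$'' compares the \emph{unrescaled} Taylor error $\rho^{3/2}=\delta^{3/4}$ against the \emph{rescaled} thickness $\delta/\rho=\delta^{1/2}$, a mismatch of scales; the correct comparison is $\delta^{3/4}$ against $\delta$, and $\delta^{3/4}\gg\delta$. Concretely, on caps of tangential size $\delta^{1/4}$ the surface wanders a distance $\sim\delta^{3/4}$ from its osculating paraboloid, far outside the $\delta$-neighborhood, so Lemma \ref{l2} cannot be invoked at that scale.

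The repair is immediate and is precisely what the paper does: take $\rho=\delta^{2/3}$, the endpoint of the admissible range. No ``balancing'' is needed --- the recursion $K_{p,n,S}(\delta)\le C_\epsilon\,(\delta^{1/3})^{-\frac{n}{2}(\frac12-\frac1p)-\epsilon}\,K_{p,n,S}(\delta^{2/3})$ telescopes along the scales $\delta^{(2/3)^k}$, the exponents summing geometrically to $-\frac n2(\frac12-\frac1p)-O(\epsilon)$ and the $O(\log\log(1/\delta))$ accumulated constants being absorbed into $\delta^{-\epsilon}$. A minor further caveat: the bound $O(\rho^{3/2})$ on the Taylor remainder presumes $C^3$ regularity, whereas the lemma assumes only $C^2$ (for which the remainder is merely $o(\rho)$); the paper is equally cavalier on this point, but it is worth being aware of when writing the argument out in full.
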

\begin{proof}
For $\delta<1$, let as before $K_{p,n,S}(\delta)$ be the smallest constant such that for each   $f$ with Fourier support in $\A_\delta$ we have
$$\|f\|_p\le K_{p,n,S}(\delta)(\sum_{\theta\in \P_\delta}\|f_\theta\|_p^p)^{1/p}.$$
First note that for each such $f$
$$\|f\|_p\le K_{p,n,S}(\delta^{\frac23})(\sum_{\tau\in \P_{\delta^{\frac23}}}\|f_\tau\|_p^p)^{1/p}.$$
Second, our assumption on the principal curvatures of $S$ combined with Taylor's formula shows that on each $\tau\in \P_{\delta^{\frac23}}$, $S$ is within $\delta$ from a paraboloid $H^{n-1}_{\upsilon}$ with at least $n-2$ of the entries of $\upsilon$ having magnitudes of order 1. By invoking Lemma \ref{l2} for this paraboloid (via a simple rescaling), combined with  parabolic rescaling we get
$$\|f_\tau\|_p\lesssim (\delta^{1/3})^{\frac1p-\frac12}\max_{\upsilon\in\{-1,1\}^{n-2}}K_{p,n-1,\upsilon}(\delta^{1/3})(\sum_{\theta\in \P_\delta:\theta\subset\tau}\|f_\theta\|_p^p)^{1/p}.$$
For each $\epsilon>0$, we conclude the existence of $C_\epsilon$ such that for each $\delta<1$
$$K_{p,n,S}(\delta)\le C_\epsilon [\delta^{-\frac{n}{2}(\frac12-\frac1p)-\epsilon}]^{1/3}K_{p,n,S}(\delta^{\frac23}).$$
By iteration this immediately leads to the desired conclusion.
\end{proof}

We next present a lemma that will play a key role in the proof of Proposition \ref{hcnyf7yt75ycn8u32r8907n580-9=--qc mvntvu5n8t} below.

\begin{lemma}
\label{l:121}
Let $A$ be an invertible symmetric $n\times n$ matrix and let $S$ be an $m$ dimensional affine subspace of $\R^n$. There exists $\delta=\delta(A)$ such that if the $m$ dimensional quadratic hypersurface
$$x_{m+1}=\langle Ax,x\rangle,\;x\in S$$
has $l$ principal curvatures in the interval $[-\delta,\delta]$ then
$$l\le n-m.$$
\end{lemma}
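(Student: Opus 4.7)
The plan is to reduce the geometric statement to a linear-algebraic one about the restriction of the quadratic form $\langle A\cdot,\cdot\rangle$ to the linear subspace underlying $S$, and then apply Cauchy's interlacing theorem.

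First, I would write $S=x_0+S_0$ with $S_0\subset\R^n$ a linear subspace of dimension $m$, and choose an orthonormal basis $e_1,\dots,e_m$ of $S_0$. Parameterizing points of $S$ by $y\in\R^m$ via $x=x_0+\sum_iy_ie_i$, the height function $\langle Ax,x\rangle$ becomes a quadratic polynomial in $y$ of the form $f(y)=\langle\tilde A y,y\rangle+\ell(y)+c$, where $\tilde A$ is the $m\times m$ symmetric matrix defined by $\tilde A_{ij}=\langle Ae_i,e_j\rangle$ (the restriction of the bilinear form to $S_0$) and $\ell,c$ are linear and constant. Thus, in these coordinates, the quadric is a paraboloid in $\R^{m+1}$ whose Hessian equals $2\tilde A$.

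Second, at a point $y$ of this graph the induced metric is $g(y)=I+\nabla f(y)\nabla f(y)^{T}$, which is symmetric positive definite with $\|g(y)\|$ bounded on the compact region under consideration, and the second fundamental form is $h(y)=2\tilde A/\sqrt{1+|\nabla f(y)|^2}$. The principal curvatures are the eigenvalues of the shape operator $g^{-1}h$. Since $g$ is positive definite, Sylvester's law of inertia shows that $g^{-1}h$ has the same signature as $\tilde A$, and comparing singular values of $g^{-1/2}hg^{-1/2}$ with those of $h$ produces a constant $\gamma=\gamma(A)>0$ for which the number of principal curvatures in $[-\delta,\delta]$ is bounded by the number of eigenvalues of $\tilde A$ in $[-\delta/\gamma,\delta/\gamma]$.

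Third, for the linear-algebraic step I would extend $e_1,\dots,e_m$ to an orthonormal basis of $\R^n$, so that $\tilde A$ is the upper-left $m\times m$ principal submatrix of the matrix representing $A$. By Cauchy's interlacing theorem,
\[
\mu_k(A)\le\lambda_k(\tilde A)\le\mu_{k+n-m}(A),\qquad k=1,\dots,m,
\]
where $\mu_1(A)\le\cdots\le\mu_n(A)$ are the eigenvalues of $A$. Let $\delta_0=\min_k|\mu_k(A)|>0$ (by invertibility) and let $q$ denote the number of negative $\mu_k(A)$. If $|\lambda_k(\tilde A)|\le\delta_0/2$, interlacing forces $\mu_k(A)<0$ and $\mu_{k+n-m}(A)>0$, i.e., $q-(n-m)+1\le k\le q$, which admits at most $n-m$ values of $k$. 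Setting $\delta(A):=\gamma\delta_0/2$ then closes the argument.

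The main obstacle I foresee is the quantitative step relating the geometric principal curvatures to the algebraic eigenvalues of $\tilde A$: since $g$ varies with the point on the quadric, the constant $\gamma$ must dominate $\|g\|$ uniformly over the region of interest. For the paper's applications the quadric is always a compact cap on a paraboloid, so $|\nabla f|$ is bounded and this control is automatic.
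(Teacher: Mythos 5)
Your argument is correct, but it runs through a different piece of linear algebra than the paper's. You and the paper both reduce the statement to a fact about the compression $\tilde A=P_SA|_{S}$ of the quadratic form to the (linear) subspace underlying $S$: if $l$ principal curvatures lie in $[-\delta,\delta]$, then $\tilde A$ is small (in norm) on an $l$-dimensional subspace. You then finish with Cauchy interlacing for the principal $m\times m$ submatrix, counting how many eigenvalues of $\tilde A$ can lie in $(-\delta_0/2,\delta_0/2)$ with $\delta_0=\|A^{-1}\|^{-1}$; this is correct, and as a bonus it actually yields the sharper bound $l\le\min(n-m,\,p(A),\,q(A))$ (apply the same count to $-A$), which is in the spirit of the refinement the paper needs later for the $l^2$ result. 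The paper instead avoids interlacing altogether: from $\|P_SAx\|\le\tfrac12\|A^{-1}\|^{-1}\|x\|$ on an $l$-dimensional $S_1\subset S$ it deduces $S_1\cap A^{-1}S=\{0\}$ (any unit $x\in S_1$ with $Ax\in S$ would give $\|Ax\|=\|P_SAx\|\le\tfrac12\|A^{-1}\|^{-1}$, contradicting $\|Ax\|\ge\|A^{-1}\|^{-1}$), and then $l+m\le n$ by a dimension count; this is basis-free and a line shorter, while your route is a standard theorem invoked off the shelf. Concerning the caveat you raise: you are right that passing from principal curvatures of the graph to eigenvalues of $\tilde A$ costs a factor controlled by the gradient, so $\delta$ really depends on the size of the cap as well as on $A$ (over an unbounded $S$ the statement would fail, since all curvatures of a paraboloid decay at infinity). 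This is not a defect of your proof relative to the paper's: the paper's step ``choose $\delta$ small enough so that the hypothesis forces $\|P_SAx\|\le\tfrac12\|A^{-1}\|^{-1}\|x\|$ on an $l$-dimensional subspace'' needs exactly the same gradient bound, which is automatic in the intended application to caps of $H^{n-1}_\upsilon$ over $[-1/2,1/2]^{n-1}$.
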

\begin{proof}
We may assume $S$ contains the origin. Choose $\delta$ small enough so that the hypothesis forces the existence of an $l$ dimensional subspace $S_1$ of $S$ such that
\begin{equation}
\label{3dvjher9r576tiocuxmgyutxmcjruit78}
\|P_{S}Ax\|\le \frac12\|A^{-1}\|^{-1}\|x\|
\end{equation}
for each $x\in S_1$. Here $P_S$ is the orthogonal projection onto $S$. We claim that $S_1\cap A^{-1}S=\{0\}$, which will easily imply the desired conclusion. Indeed, otherwise there is $x\in S_1$ with $\|x\|=1$ and $Ax\in S$, and \eqref{3dvjher9r576tiocuxmgyutxmcjruit78} forces the contradiction.
\end{proof}
\bigskip

Here is the basic step in the Bourgain-Guth-type induction on scales that relates the linear and the multilinear decoupling.

\begin{proposition}
\label{hcnyf7yt75ycn8u32r8907n580-9=--qc mvntvu5n8t}
Fix $n\ge 3$, $\upsilon\in\{-1,1\}^{n-1}$ and let $p\ge 2$. Assume one of the following holds

(i) $n=3$

(ii) $n\ge 4$ and $K_{p,n-2,\upsilon'}(\delta')\lesssim_\epsilon {\delta'}^{-\frac{n-3}{2}(\frac12-\frac1p)-\epsilon}$ for each $\delta'>0$, $\upsilon'\in\{-1,1\}^{n-3}$ and  each $\epsilon>0$.

 Then for each $\epsilon$ there exist constants $C_\epsilon$, $C_n$ such that for each $R>1$ and $K\ge 1$
$$\|\widehat{gd\sigma}\|_{L^p(w_{B_R})}\le C_\epsilon[(\sum_{\alpha\subset H^{n-1}_\upsilon\atop{\alpha:\frac{1}{K}-\text{ cap}}}\|\widehat{g_{\alpha}d\sigma}\|_{L^p(w_{B_R)}}^p)^{1/p}+K^{\frac{n-1}{2}(\frac12-\frac1p)+\epsilon}(\sum_{\beta\subset H^{n-1}_\upsilon\atop{\beta:\frac{1}{K^{1/2}}-\text{ cap}}}\|\widehat{g_{\beta}d\sigma}\|_{L^p(w_{B_R})}^p)^{1/p}]+$$$$+K^{C_n}C_{p,n,\upsilon}(R^{-1},K^{-n})(\sum_{\Delta\subset H^{n-1}_\upsilon\atop{\Delta:\frac1{R^{1/2}}-\text{ cap}}}\|\widehat{g_{\Delta}d\sigma}\|_{L^p(w_{B_R})}^p)^{1/p}$$
\end{proposition}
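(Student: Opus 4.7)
The plan is to follow the standard Bourgain--Guth induction-on-scales paradigm. I would first cover $B_R$ by a finitely overlapping family of balls $B_K$ of radius $\sim K$, on each of which every $\widehat{g_\alpha d\sigma}$ (with $\alpha$ a $1/K$-cap on $H^{n-1}_\upsilon$) is essentially constant. Fixing one such $B_K$ and letting $c_\alpha$ denote that value, I call $\alpha$ \emph{significant} if $c_\alpha \geq K^{-A}\max_{\alpha'} c_{\alpha'}$ for a sufficiently large $A = A(n,p,\epsilon)$; the contribution of the non-significant caps is absorbed into an acceptable error, so pointwise on $B_K$ one has $|\widehat{gd\sigma}(x)| \lesssim \sum_{\alpha\text{ signif.}} c_\alpha$. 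The next step is the standard linear-algebra dichotomy: either (a) there exist $n$ significant caps $\alpha_1,\dots,\alpha_n$ whose unit normals span a parallelepiped of volume $\geq K^{-n}$, or (b) all significant caps have centers within $K^{-c(n)}$ of a single affine hyperplane $V \subset \R^n$.

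In case (a), pointwise on $B_K$ one has $|\widehat{gd\sigma}(x)| \lesssim K^{O(1)} \prod_{i=1}^n |\widehat{g_{\alpha_i} d\sigma}(x)|^{1/n}$. I would then raise to the $p$-th power, integrate over $B_K$, sum over the cover of $B_R$ (handling the weight $w_{B_R}$ via the standard local-to-global argument), and apply the definition of $C_{p,n,\upsilon}(R^{-1}, K^{-n})$ with $\tau_i = \alpha_i$ (which are $K^{-n}$-transverse caps of size $1/K \geq R^{-1/2}$). This directly produces the multilinear term of the statement, with the $1/R^{1/2}$-caps $\Delta$ arising from the definition of $C_{p,n,\upsilon}$.

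Case (b) is the main technical point. The union of the significant caps lies in a $K^{-c}$-neighborhood of the $(n-2)$-dimensional quadric $V \cap H^{n-1}_\upsilon$. Here I would invoke Lemma~\ref{l:121} with $A = \operatorname{diag}(\upsilon_1,\dots,\upsilon_{n-1})$ (which is nondegenerate) and $m = n-1$: at most one principal curvature of this slice can be small. Viewed as a hypersurface in $V \cong \R^{n-1}$, the slice therefore satisfies the curvature hypothesis of Lemma~\ref{l7} in ambient dimension $n-1$; the lower-dimensional input $K_{p,n-2,\upsilon'}(\delta') \lesssim {\delta'}^{-\frac{n-3}{2}(\frac12 - \frac1p) - \epsilon}$ required by Lemma~\ref{l7} is exactly hypothesis (ii). (For $n = 3$ the slice is a curve and hypothesis (ii) is not needed: the relevant bound comes from the $n = 2$ decoupling of \cite{BD3} or, in the degenerate case of a line, from Fubini.) Applying Lemma~\ref{l7} at scale $\delta' = 1/K$ decouples $\sum_{\alpha\text{ signif.}} \widehat{g_\alpha d\sigma}$ into contributions over $1/K^{1/2}$-caps on the slice, each of which sits in a single $1/K^{1/2}$-cap $\beta$ of $H^{n-1}_\upsilon$; the decoupling cost $(1/K)^{-\frac{n-1}{2}(\frac12 - \frac1p) - \epsilon} = K^{\frac{n-1}{2}(\frac12 - \frac1p) + \epsilon}$ matches the middle term of the proposition. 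The hardest part will be making this case rigorous---controlling the lift of $1/K^{1/2}$-caps on the slice back to caps $\beta$ on $H^{n-1}_\upsilon$ without extra $K$-losses---and this is precisely where Lemma~\ref{l:121} is indispensable, since for the hyperbolic paraboloid the slice can genuinely possess one flat principal direction.
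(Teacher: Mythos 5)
Your proposal follows essentially the same route as the paper: the Bourgain--Guth dichotomy on balls $B_K$, the multilinear constant $C_{p,n,\upsilon}(R^{-1},K^{-n})$ for the transverse case, and, in the near-hyperplane case, Lemma \ref{l:121} to guarantee that the vertical slice of $H^{n-1}_\upsilon$ has at most one small principal curvature, followed by Lemma \ref{l7} (fed by hypothesis (ii)) and Fubini to decouple the slab into $K^{-1/2}$-caps at cost $K^{\frac{n-1}{2}(\frac12-\frac1p)+\epsilon}$. The only minor imprecisions are indexical: Lemma \ref{l:121} is applied in the ambient $\R^{n-1}$ with $m=n-2$ (so $l\le 1$), and at $n=3$ the degenerate line is handled by the trivial $l^p$ decoupling \eqref{e12} rather than Fubini alone, which already achieves the target exponent $\frac12-\frac1p$.
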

\begin{remark}
The exponent $n$ of $K^{-n}$ in the expression  $C_{p,n,\upsilon}(R^{-1},K^{-n})$ is not important, and not optimal.
\end{remark}
\begin{proof}

We first prove the case $n=3$ and then indicate the modifications needed for $n\ge 4$.

It is rather immediate that if  $Q_1,Q_2,Q_3\subset [-1/2,1/2]^2$, the volume of the parallelepiped spanned by the  unit normals to $H^2_\upsilon$ at $\pi^{-1}(Q_i)$ is comparable to the area of the triangle $\Delta Q_1Q_2Q_3$.

As in \cite{BG}, we may think of $|\widehat{g_{\alpha}d\sigma}|$ as being essentially constant on each ball $B_K$. Denote by $c_\alpha(B_K)$ this value and  let $\alpha^*$ be the cap that maximizes it.

The starting point in the argument is the observation in \cite{BD3} that for each $B_K$ there exists a line $L=L(B_K)$ in the $(\xi_1,\xi_2)$ plane such that if
$$S_L=\{(\xi_1,\xi_2): \dist((\xi_1,\xi_2),L)\le \frac{C}{K^{\frac12}}\}$$
then for $x\in B_K$
$$ |\widehat{gd\sigma}(x)|\le $$
\begin{equation}
\label{term1.1}C\max_{\alpha}|\widehat{g_{\alpha}d\sigma}(x)|+
\end{equation}
\begin{equation}
\label{term1.4addedonJuly15/2015}C\max_{\beta}|\widehat{g_{\beta}d\sigma}(x)|+
\end{equation}
\begin{equation}
\label{term1.2}
K^{4}\max_{\alpha_1,\alpha_2,\alpha_3\atop{K^{-2}-\text{transverse}}}(\prod_{i=1}^3|\widehat{g_{\alpha_i}d\sigma}(x)|)^{1/3}+
\end{equation}\begin{equation}
\label{term1.3}
|\sum_{\beta\subset \pi^{-1}(S_L)\cap H^2_\upsilon}\widehat{g_{\beta}d\sigma}(x)|.
\end{equation}
To see this, we distinguish three scenarios.

First, if $c_\alpha(B_K)\le K^{-2}c_{\alpha^*}(B_K)$ for each $\alpha$ with  $\dist(\pi(\alpha),\pi(\alpha^*))\ge \frac{10}{K^{\frac12}}$, then the sum of \eqref{term1.1} and \eqref{term1.4addedonJuly15/2015} suffices, as
$$ |\widehat{gd\sigma}(x)|\le \sum_{\alpha:\;\dist(\pi(\alpha),\pi(\alpha^*))\ge \frac{10}{K^{\frac12}}}|\widehat{g_\alpha d\sigma}(x)|+|\sum_{\alpha:\;\dist(\pi(\alpha),\pi(\alpha^*))< \frac{10}{K^{\frac12}}}\widehat{g_\alpha d\sigma}(x)|.$$
Otherwise, there is  $\alpha^{**}$ with $\dist(\pi(\alpha^{**}),\pi(\alpha^*))\ge \frac{10}{K^{\frac12}}$ and
$c_{\alpha^{**}}(B_K)\ge K^{-2}c_{\alpha^*}(B_K)$.
The line $L$ is determined by the centers of $\alpha^*,\alpha^{**}$.

Second, if there is $\alpha^{***}$ such that  $\pi(\alpha^{***})$ intersects the complement of $S_L$ and $c_{\alpha^{***}}(B_K)\ge K^{-2}c_{\alpha^*}(B_K)$ then  \eqref{term1.2} suffices. Indeed, note that $\alpha^{*},\alpha^{**}$, $\alpha^{***}$ are $K^{-1}$ transverse by the earlier remark.

The third case is when $c_{\alpha}(B_K)< K^{-2}c_{\alpha^*}(B_K)$ whenever  $\pi(\alpha)$ intersects the complement of $S_L$. It is immediate that the sum of \eqref{term1.1} and \eqref{term1.3} will suffice in this case.

The only  case we need to address is the one corresponding to this latter scenario.  An application of the trivial $l^p$ decoupling \eqref{e12} shows that
$$\|\sum_{\beta:\pi(\beta)\subset S_L}\widehat{g_{\alpha}d\sigma}\|_{L^p(B_K)}\lesssim K^{\frac12-\frac1p}(\sum_{\beta}\|\widehat{g_{\beta}d\sigma}\|_{L^p(w_{B_K})}^p)^{1/p}.$$
This is the best we can say in general. Indeed, in the case of the hyperbolic paraboloid $\upsilon=(1,-1)$, if the line $L$ happens to be $\xi_2=\pm\xi_1$ then $\pi^{-1}(L)$ is itself a line. The absence of curvature prevents any non-trivial estimate to hold.

We conclude that in either case
$$\|\widehat{gd\sigma}\|_{L^p(B_K)}\lesssim [(\sum_{\alpha\subset H^2_\upsilon\atop{\alpha:\frac{1}{K}\text{ cap}}}\|\widehat{g_{\alpha}d\sigma}\|_{L^p(w_{B_K})}^p)^{1/p}+K^{\frac12-\frac1p}(\sum_{\beta\subset H^2_\upsilon\atop{\beta:\frac{1}{K^{1/2}}\text{ cap}}}\|\widehat{g_{\beta}d\sigma}\|_{L^p(w_{B_K})}^p)^{1/p}]+$$$$+K^{10}C_{p,3,\upsilon}(R^{-1},K^{-1})(\sum_{\Delta\subset H^2_\upsilon\atop{\Delta:\frac1{R^{1/2}}\text{ cap}}}\|\widehat{g_{\Delta}d\sigma}\|_{L^p(w_{B_K})}^p)^{1/p}.$$Finally,  raise to the $p^{th}$ power and sum over $B_K\subset B_R$. Also, the norm $\|\widehat{gd\sigma}\|_{L^{p}(B_R)}$ can be replaced by the weighted norm $\|\widehat{gd\sigma}\|_{L^{p}(w_{B_R})}$ via the standard localization argument.

One may repeat this argument in the case $n\ge 4$ as follows. For each $B_K$ there exists a hyperplane $\E=\E(B_K)$ in the $(\xi_1,\ldots,\xi_{n-2})$ space such that
for $x\in B_K$
$$ |\widehat{gd\sigma}(x)|\le $$
$$C\max_{\alpha}|\widehat{g_{\alpha}d\sigma}(x)|+$$
$$C\max_{\beta}|\widehat{g_{\beta}d\sigma}(x)|+$$

$$K^{C_n}\max_{\alpha_1,\ldots,\alpha_n\atop{K^{-n}-\text{transverse}}}(\prod_{i=1}^n|\widehat{g_{\alpha_i}d\sigma}(x)|)^{1/n}+$$
$$|\sum_{\beta\subset \pi^{-1}(S_\E)\cap H^{n-1}_\upsilon}\widehat{g_{\beta}d\sigma}(x)|.$$
Here
$$S_\E=\{(\xi_1,\ldots,\xi_{n-1}): \dist((\xi_1,\ldots,\xi_{n-1}),\E)\lesssim \frac{1}{K^{\frac12}}\}.$$
We only need to explain how to accommodate the previous argument to control the last term. Cover $\pi^{-1}(S_\E)\cap H^{n-1}_\upsilon$ by pairwise disjoint caps $\beta$ of diameter $\sim\frac1{K^{1/2}}$. These caps are inside the $\frac1K$ neighborhood of a cylinder of height $\sim K^{-\frac12}$ over the $n-2$ dimensional manifold $$S_{\E,\upsilon}=\{(\xi_1,\ldots,\xi_n)\in H^{n-1}_\upsilon:(\xi_1,\ldots,\xi_{n-1})\in \E\},$$
and correspond to a tiling of this manifold by $\frac1{K^{1/2}}$- caps. The important new observation is that $S_{\E,\upsilon}$, regarded as an $n-2$ dimensional hypersurface in the hyperplane
$$\{(\xi_1,\ldots,\xi_n):(\xi_1,\ldots,\xi_{n-1})\in \E\}$$
has at least $n-3$ of its $n-2$ principal curvatures bounded away from zero, at any given point. This is of course a consequence of Lemma \ref{l:121}. The case $n=3$ discussed earlier shows that one (in this case the only) principal curvature may indeed happen to be zero. More generally, consider any hyperbolic paraboloid $H^{n-1}_{\upsilon}$. Fix any $A_1,\ldots,A_{n-1}$ such that
$$\sum_{i=1}^{n-1}\upsilon_iA_i^2=0.$$Let $\E$ be the hyperplane
$$\sum_{i=1}^{n-1}\upsilon_iA_i\xi_i=0.$$

It is easy to check that for each point $(\xi_1^*,\ldots,\xi_n^*)$ in the corresponding manifold $S_{\E,\upsilon}$, the (appropriate part of the) line
$$\frac{\xi_1-\xi_1^*}{A_1}=\ldots=\frac{\xi_{n-1}-\xi_{n-1}^*}{A_{n-1}}=\frac{\xi_n-\xi_n^*}{0}$$
is inside $S_{\E,\upsilon}$. In other words $S_{\E,\upsilon}$ is a cylinder, and one of its principal curvatures will be zero.

Using our hypothesis and  Lemma \ref{l7} and Fubini we can write
$$\|\sum_{\beta\subset \pi^{-1}(S_\E)\cap H^{n-1}_\upsilon}\widehat{g_{\beta}d\sigma}\|_{L^p(B_K)}\lesssim_\epsilon K^{\frac{n-1}{2}(\frac12-\frac1p)+\epsilon}(\sum_{\beta}\|\widehat{g_{\beta}d\sigma}\|_{L^p(w_{B_K})}^p)^{1/p}.$$

The argument is now complete.
\end{proof}
\bigskip

Simple parabolic rescaling leads to the following more general version. The interested reader should consult the proof of the analogous result in \cite{BD3} for details.
\begin{proposition}
\label{p9}
Fix $n\ge 3$, $\upsilon\in\{-1,1\}^{n-1}$ and let $p\ge 2$. Assume one of the following holds

(i) $n=3$

(ii) $n\ge 4$ and $K_{p,n-2,\upsilon'}(\delta')\lesssim_\epsilon {\delta'}^{-\frac{n-3}{2}(\frac12-\frac1p)-\epsilon}$ for each $\delta'>0$, $\upsilon'\in\{-1,1\}^{n-3}$ and  each $\epsilon>0$.

 Then for each $\epsilon$ there exist constants $C_\epsilon$, $C_n$ such that for each $R>1$ and $K\ge 1$ and for each $\delta$-cap $\tau$ on $H^{n-1}_\upsilon$ we have
$$\|\widehat{g_\tau d\sigma}\|_{L^p(w_{B_R})}\le C_\epsilon[(\sum_{\alpha\subset \tau\atop{\alpha:\frac{\delta}{K}-\text{ cap}}}\|\widehat{g_{\alpha}d\sigma}\|_{L^p(w_{B_R)}}^p)^{1/p}+K^{\frac{n-1}{2}(\frac12-\frac1p)+\epsilon}(\sum_{\beta\subset \tau\atop{\beta:\frac{\delta}{K^{1/2}}-\text{ cap}}}\|\widehat{g_{\beta}d\sigma}\|_{L^p(w_{B_R})}^p)^{1/p}]+$$$$+K^{C_n}C_{p,n,\upsilon}((R\delta^2)^{-1},K^{-n})(\sum_{\Delta\subset \tau\atop{\Delta:\frac1{R^{1/2}}-\text{ cap}}}\|\widehat{g_{\Delta}d\sigma}\|_{L^p(w_{B_R})}^p)^{1/p}.$$
\end{proposition}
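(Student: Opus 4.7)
The plan is to deduce Proposition~\ref{p9} from Proposition~\ref{hcnyf7yt75ycn8u32r8907n580-9=--qc mvntvu5n8t} via an affine ``parabolic rescaling'' change of variables that maps the $\delta$-cap $\tau$ onto a unit-scale cap on the \emph{same} paraboloid $H^{n-1}_\upsilon$. Suppose $\tau$ is the portion of the paraboloid lying over a $\delta$-ball centered at $\xi^0=(\xi_1^0,\dots,\xi_{n-1}^0)$ in the $\xi$-plane. I would introduce new frequency coordinates
\begin{equation*}
\eta_i = \delta^{-1}(\xi_i-\xi_i^0),\quad i=1,\dots,n-1,\qquad \eta_n = \delta^{-2}\Bigl(\xi_n-\sum_{i=1}^{n-1}\upsilon_i(\xi_i^0)^2-2\delta\sum_{i=1}^{n-1}\upsilon_i\xi_i^0\eta_i\Bigr),
\end{equation*}
under which the equation $\xi_n=\sum_i\upsilon_i\xi_i^2$ becomes $\eta_n=\sum_i\upsilon_i\eta_i^2$, with the signature $\upsilon$ unchanged. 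Writing this affine map as $\xi=A\eta+b$ with $A=\mathrm{diag}(\delta,\dots,\delta,\delta^2)$, and setting $\tilde g(\eta):=g_\tau(A\eta+b)$, I would use the elementary identity
\begin{equation*}
\widehat{g_\tau d\sigma}(x) = \delta^{n-1}e(x\cdot b)\,\widehat{\tilde g\,d\tilde\sigma}(A^Tx),
\end{equation*}
which persists cap-by-cap when $\tilde g$ is replaced by a sub-cap restriction.

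Under this rescaling, horizontal sub-caps of $\tau$ of diameter $\delta/K$, $\delta/K^{1/2}$, $R^{-1/2}$ correspond precisely to unit-scale paraboloid caps of diameters $1/K$, $1/K^{1/2}$, $(R\delta^2)^{-1/2}$, respectively. The image of the physical ball $B_R\subset \R^n_x$ under $A^T$ is a box of dimensions $(R\delta)^{n-1}\times R\delta^2$ in $y$-space, which I would cover by finitely overlapping balls of radius $\tilde R:=R\delta^2$. Applying Proposition~\ref{hcnyf7yt75ycn8u32r8907n580-9=--qc mvntvu5n8t} to $\tilde g$ on each tile (with scale parameter $\tilde R$ and the same $K$), raising to the $p^{th}$ power, and summing in the tiles produces an estimate whose last error term features $C_{p,n,\upsilon}(\tilde R^{-1},K^{-n})=C_{p,n,\upsilon}((R\delta^2)^{-1},K^{-n})$, in agreement with the conclusion of Proposition~\ref{p9}. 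The $K^{\frac{n-1}{2}(\frac12-\frac1p)+\epsilon}$ coefficient is likewise transported unchanged.

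Pulling back to $x$-space, the Jacobian $\delta^{-(n+1)}$ arising from $dx\leftrightarrow dy$ cancels against the $\delta^{(n-1)p}$ factor produced by the identity above (once $p^{th}$ roots are extracted), so no residual power of $\delta$ survives. The one piece of real book-keeping is the weight comparison at the summation step: one must check that $\sum_T w_T$ in $y$-space is controlled pointwise by a weight adapted to the box $A^TB_R$, whose pullback under $A^T$ is in turn dominated by $w_{B_R}$ on $\R^n_x$. This is the main technical obstacle, but it is entirely routine and is carried out exactly as in the analogous parabolic-rescaling step of \cite{BD3}; no additional ideas beyond the unscaled Proposition~\ref{hcnyf7yt75ycn8u32r8907n580-9=--qc mvntvu5n8t} and the affine change of variables are required.
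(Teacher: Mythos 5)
Your proposal is correct and is exactly the paper's intended argument: the paper derives Proposition \ref{p9} from Proposition \ref{hcnyf7yt75ycn8u32r8907n580-9=--qc mvntvu5n8t} by the same parabolic rescaling (affine change of variables preserving the signature $\upsilon$, tiling of the image of $B_R$ by balls of radius $R\delta^2$, and the standard weight comparison), referring to \cite{BD3} for the details you spell out. The only cosmetic slip is calling the linear part $A$ diagonal — it carries shear entries $2\delta\upsilon_i\xi_i^0$ in the last row — but your formula for $\eta_n$ already accounts for this and none of the subsequent bookkeeping is affected.
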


We are now ready to prove Theorem \ref{t4}. Let $K=\nu^{-1/n}$.
Iterate Proposition \ref{p9} starting with caps of scale $ 1$ until all resulting caps have scale  $R^{-1/2}$.
Each iteration  lowers the scale of the caps from $\delta$ to at least $\frac{\delta}{K^{1/2}}$. When iteration is over, we end up with a sum of terms of the form
$$T_\Gamma=\Gamma K^{C_n}(\sum_{\Delta:\frac1{R^{1/2}}-\text{ cap}}\|\widehat{g_{\Delta}d\sigma}\|_{L^p(w_{B_R})}^p)^{1/p},$$
with various coefficients $\Gamma$.
Each such term  arises via $\le\log_K R$ iterations. Also, a crude estimate shows that we end up with at most $3^{\log_K R}=R^{O(\log_{\nu^{-1}}3)}$ such terms.

It remains to get a uniform upper bound on $\Gamma$. Tracing back the iteration history of $T_\Gamma$, assume it went through $m_1$ steps where scale was lowered by $K$ and $m_2$ steps where scale was lowered by $K^{1/2}$. Then obviously, for each $\epsilon$
$$\Gamma\le (C_{\epsilon})^{m_1+m_2}K^{[\frac{n-1}{2}(\frac12-\frac1p)+\epsilon]m_2}C_{p,n,\upsilon}((RK^{-m_2-2m_1})^{-1},\nu).$$
Using  the bound $m_1+m_2\le \log_K R$ this is further bounded by
$$R^{\log_{\nu^{-1}}C_\epsilon}K^{\epsilon\log_KR}C_{p,n,\upsilon}((RK^{-m_2-2m_1})^{-1},\nu)(RK^{-m_2-2m_1})^{-\frac{n-1}{2}(\frac12-\frac1p)}R^{\frac{n-1}{2}(\frac12-\frac1p)}\le $$
$$\le R^{\log_{\nu^{-1}}C_\epsilon}K^{\epsilon\log_KR}\sup_{R^{-1}\le \delta\le 1 }(\delta R)^{\frac{n-1}{2}(\frac12-\frac1p)}C_{p,n,\upsilon}(\delta,\nu).$$
The proof is now complete, by carefully letting $\epsilon$ approach zero at slower rate than $\nu$.

\bigskip

\subsection{The bootstrapping argument}

We now enter the second and final stage of the argument for Theorem \ref{t1}. We recommend the reader to check Remark \ref{addedlaterrem} for  a high level overview of the argument.   For the remainder of the section we fix $\upsilon\in\{-1,1\}^{n-1}$ and $p>\frac{2(n+1)}{n-1}$.
To simplify notation we let $K(\delta)=\delta^{\frac{n-1}{2}(\frac12-\frac1p)}K_{p,n,\upsilon}(\delta)$.

Let $\gamma$ be the unique number such that
$$\lim_{\delta\to 0}K(\delta)\delta^{\gamma+\epsilon}=0,\;\;\text{for each }\epsilon>0$$
and
\begin{equation}
\label{o54mitnvpcmi2eoptunv890cixw0qygfd390}
\limsup_{\delta\to 0}K(\delta)\delta^{\gamma-\epsilon}=\infty,\;\;\text{for each }\epsilon>0.
\end{equation}
Write $\gamma=\frac{n-1}{4}-\frac{n+1}{2p}+\alpha$. Recall that we need to prove that $\alpha=0$.

Define
\begin{equation}
\label{jjjeryyyrdewe678wer6wr5drtftdujhdji}
\xi=\frac{2}{(p-2)(n-1)},\;\;\;\;\eta=\frac{n(np-2n-p-2)}{2p(n-1)^2(p-2)}.
\end{equation}
Since $p>\frac{2(n+1)}{n-1}$ we have that $\xi<\frac12$. A simple computation reveals that the assumption $\alpha>0$ is equivalent with
$$\gamma\frac{1-\xi}{1-2\xi}>\frac{n-1}{4}-\frac{n^2+n}{2p(n-1)}+\frac{2\eta}{1-2\xi}.$$
Under this assumption it follows that we can choose $s_0\in\N$  large enough and  $\nu>0$ small enough such that, with  $\epsilon(\nu)$ as in  Theorem \ref{t4}, we have
$$\gamma(\frac{1-\xi}{1-2\xi}-\frac{\xi(2\xi)^{s_0}}{1-2\xi})>
$$
\begin{equation}
\label{mamatata}
>\frac{n-1}{4}-\frac{n^2+n}{2p(n-1)}+2^{s_0}\epsilon(\nu)+\frac{2\eta}{1-2\xi}(1-(2\xi)^{s_0})+\frac{n}{(n-1)p}(2\xi)^{s_0}.
\end{equation}

Note that $s_0$ and $\nu$ depend only on the fixed parameters $p,n,\alpha$. As a result, we follow our convention and do not record the dependence on them when using the symbol $\lesssim$.

Throughout the rest of the section $\nu$ and $s_0$  will always refer to these values.
Introduce the following semi-norms
$$\|f\|_{p,\delta,B}=(\sum_{\theta\in \P_\delta}\|f_\theta\|_{L^p(w_B)}^2)^{1/2},$$
$$|||f|||_{p,\delta,B}=\delta^{-\frac{n-1}{2}(\frac12-\frac1p)}(\sum_{\theta\in \P_\delta}\|f_\theta\|_{L^p(w_B)}^p)^{1/p}$$
and note that
\begin{equation}
\label{e22}
\|f\|_{p,\delta,B}\le |||f|||_{p,\delta,B}.
\end{equation}

For a fixed $0\le \beta\le 1$ consider the inequality
\begin{equation}
\label{inv999}
\|(\prod_{i=1}^n|\widehat{g_id\sigma}|)^{1/n}\|_{L^p(B_N)}
\lesssim_{\epsilon} A_\beta(N)N^{\epsilon}X(B_N)^{1-\beta}Y(B_N)^\beta,
\end{equation}
for arbitrary $\epsilon>0$, $N$, $g_i$ and $B_N$ as before.
Here
$$X(B_N)=(\prod_{i=1}^n|||\widehat{g_id\sigma}|||
_{p,\delta,B_N})^{\frac1{n}},$$
$$Y(B_N)=(\prod_{i=1}^n|||\widehat{g_id\sigma}|||
_{\frac{p(n-1)}{n},\delta,B_N})^{\frac1{n}}.$$

The following holds.
\begin{proposition}
\label{propinv5}
(a) Inequality \eqref{inv999} holds true for $\beta=1$ with $A_1(N)=N^{\frac{n-1}{4}-\frac{n^2+n}{2p(n-1)}}$.

(b) Moreover, if we assume \eqref{inv999} for some $\beta\in (0,1]$, then we also have \eqref{inv999} for $\frac{2\beta}{(p-2)(n-1)}$ with
$$A_{\frac{2\beta}{(p-2)(n-1)}}(N)= A_\beta(N^{1/2})\delta^{-\frac{\gamma}{2}(1-\frac{2\beta}{(p-2)(n-1)})}N^{\frac{n(np-2n-p-2)}{2p(n-1)^2(p-2)}\beta}.$$
\end{proposition}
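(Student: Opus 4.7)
For \textbf{part (a)} I would start from the Bennett--Carbery--Tao multilinear restriction theorem,
\[
\bigl\|\bigl(\prod_{i=1}^n|\widehat{g_id\sigma}|\bigr)^{1/n}\bigr\|_{L^{2n/(n-1)}(B_N)}\lesssim_\epsilon N^\epsilon\prod_i\|g_i\|_{L^2(d\sigma)}^{1/n},
\]
applied to the $\nu$-transverse caps $\tau_i$, and interpolate to $L^p$ using the trivial pointwise bound $|\widehat{g_id\sigma}|\lesssim\|g_i\|_{L^2(d\sigma)}$. The $L^2(d\sigma)$ norm then unfolds by Plancherel/orthogonality on the $\delta^{1/2}$-caps $\theta$ as an $l^2$-sum; a Bernstein-style comparison between $\|\widehat{g_{i,\theta}d\sigma}\|_{L^2(w_{B_N})}$ and $\|\widehat{g_{i,\theta}d\sigma}\|_{L^q(w_{B_N})}$ with $q=p(n-1)/n$, which uses that $\widehat{g_{i,\theta}d\sigma}$ is essentially constant on $N^{1/2}$-balls, converts this $l^2$-sum into the weighted $l^q$ sum entering $Y(B_N)$. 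Collecting all the scaling factors yields the claimed $A_1(N)$.

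For \textbf{part (b)} I would run a standard two-scale bootstrap. Cover $B_N$ by a finitely overlapping family of balls $B$ of radius $N^{1/2}$ and apply the hypothesis on each $B$ at scale $N^{1/2}$, whose defining partition uses $N^{-1/4}$-caps $\tau$:
\[
\bigl\|\bigl(\prod_i|\widehat{g_id\sigma}|\bigr)^{1/n}\bigr\|_{L^p(B)}\lesssim A_\beta(N^{1/2})N^{\epsilon/2}\tilde X(B)^{1-\beta}\tilde Y(B)^\beta,
\]
where $\tilde X,\tilde Y$ denote the $X,Y$-expressions evaluated at scale $\delta'=N^{-1/2}$. Raising to the $p$-th power, summing over $B$ with bounded overlap, and applying H\"older in $B$ with exponents $\tfrac{1}{1-\beta},\tfrac{1}{\beta}$ separates the problem into estimating $\sum_B\tilde X(B)^p$ and $\sum_B\tilde Y(B)^p$ in terms of $X(B_N)$ and $Y(B_N)$.

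The $\tilde X$-sum is controlled by the linear $l^p$ decoupling for $H^{n-1}_\upsilon$ applied separately on each $N^{-1/4}$-cap $\tau$, decomposing into its $N^{-1/2}$-sub-caps $\theta$ via parabolic rescaling; the loss is $K_{p,n,\upsilon}(N^{-1/2})^p$, and by the definition of $\gamma$ in \eqref{o54mitnvpcmi2eoptunv890cixw0qygfd390}, after absorbing the $|||\cdot|||$-normalizations this reduces to $N^{p\gamma/2+\epsilon}$, giving $\sum_B\tilde X(B)^p\lesssim N^{p\gamma/2+\epsilon}X(B_N)^p$. The $\tilde Y$-sum is more delicate, since no $l^q$ decoupling is available. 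I would instead exploit the reverse-H\"older identity
\[
\|\widehat{g_{i,\theta}d\sigma}\|_{L^q(w_B)}\approx N^{\tfrac{n}{2}(\tfrac{1}{q}-\tfrac{1}{p})}\|\widehat{g_{i,\theta}d\sigma}\|_{L^p(w_B)},
\]
which holds because $\widehat{g_{i,\theta}d\sigma}$ is essentially constant on $B$, together with an $L^2$-orthogonality decomposition of $g_{i,\tau}=\sum_{\theta\subset\tau}g_{i,\theta}$, to bound $\tilde Y(B)^p$ by a mixture of the form $N^{p\eta}X(B_N)^{p(1-\xi)}Y(B_N)^{p\xi}$ augmented by an $l^p$ decoupling penalty $N^{p\gamma(1-\xi)/2}$ on the $X$-portion. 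Feeding these back into the H\"older split, the exponents on $X(B_N)$ and $Y(B_N)$ combine to $(1-\beta)+\beta(1-\xi)=1-\xi\beta$ and $\xi\beta$ respectively, producing the advertised constant $A_{\xi\beta}(N)=A_\beta(N^{1/2})\delta^{-\gamma(1-\xi\beta)/2}N^{\eta\beta}$.

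\textbf{The main obstacle} is the $\tilde Y$-conversion: the exponent $\xi=\frac{2}{(p-2)(n-1)}$ must emerge precisely from balancing the Bernstein factor $N^{\frac{n}{2}(\frac{1}{q}-\frac{1}{p})}$ against the $|||\cdot|||$-normalization $\delta^{-\frac{n-1}{2}(\frac12-\frac1q)}$ across the two scales $N^{-1/2}$ and $N^{-1}$, with $\eta$ arising as the residual mismatch after also accounting for the $l^2\to l^p$ passage. The hypothesis $p>\frac{2(n+1)}{n-1}$ forces $\xi<\frac12$, ensuring that the iteration strictly contracts $\beta$, which is exactly what the bootstrap in the next subsection will exploit to contradict $\alpha>0$.
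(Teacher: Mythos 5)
Your plan is correct and follows essentially the same route as the paper, whose proof of this proposition is simply a citation of Proposition 6.3 in \cite{BD3}: part (a) is the Bennett--Carbery--Tao estimate interpolated with $L^\infty$ plus the passage from the $l^2$-based seminorms to the $|||\cdot|||$ seminorms via \eqref{e22}, and part (b) is the two-scale parabolic rescaling argument in which the $L^{p(n-1)/n}$ seminorm is split by H\"older into an $L^p$ piece (decoupled from $N^{-1/4}$-caps to $N^{-1/2}$-caps at cost $K(N^{-1/2})\approx N^{\gamma/2}$) and an $L^2$ piece (handled by orthogonality and reverse H\"older), exactly as you describe. The only point the paper singles out --- that the H\"older inequality $|||\cdot|||_{\frac{p(n-1)}{n}}\le|||\cdot|||_{p}^{1-\xi}|||\cdot|||_{2}^{\xi}$ with $\xi=\frac{2}{(p-2)(n-1)}$ survives the replacement of $\|\cdot\|$ by $|||\cdot|||$ --- is precisely the balancing step you flag as the main obstacle, so your outline is faithful to the intended argument.
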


The proof follows line by line the proof of the analogous Proposition 6.3 in \cite{BD3}. More precisely, part (a) here follows right away from Proposition 6.3 (i) by simply invoking \eqref{e22}. Also, the only modification needed to prove part (b) is to notice that the following consequence of H\"older's inequality used in \cite{BD3}
$$\|\widehat{g_id\sigma}\|_{\frac{p(n-1)}{n},\delta,B_N}\le \|\widehat{g_id\sigma}\|_{p,\delta,B_N}^{1-\frac{2}{(p-2)(n-1)}}\|\widehat{g_id\sigma}\|_{2,\delta,B_N}^{\frac{2}{(p-2)(n-1)}}$$
continues to hold if $\|\cdot\|$ is replaced with $|||\cdot|||$.

Proposition \ref{propinv5} implies that for each $s\ge 0$
$$A_{\xi^s}(N)= N^{\psi(\xi^s)}$$
with
\begin{equation}
\label{inv13}
\psi(\xi^{s+1})= \frac12\psi(\xi^s)+\frac\gamma2(1-\xi^{s+1})+\eta\xi^s.
\end{equation}
Recall that $\xi<\frac12$. Iterating \eqref{inv13}  gives
\begin{equation}
\label{inv15}
\psi(\xi^s)=\frac1{2^s}\psi(1)+\gamma(1-2^{-s})+2(\frac\eta\xi-\frac\gamma2)\frac{2^{-s}-\xi^{s}}{\xi^{-1}-2}
\end{equation}

Note that $Y(B_N)\lesssim X(B_N)N^{\frac{n}{(n-1)p}}$. As \eqref{inv999} holds for $\beta=\xi^s$ and arbitrary $\nu$-transverse caps $\tau_i$ we get
\begin{equation}
\label{7743048765898}
\delta^{\frac{n-1}{2}(\frac12-\frac1p)}C_{p,n,\upsilon}(\delta,\nu)\lesssim_{\epsilon,s} \delta^{-\epsilon}A_{\xi^s}(N)N^{\frac{n\xi^s}{(n-1)p}}, \;\;\text{for each }\epsilon>0.
\end{equation}
\bigskip

To finish the proof of Theorem \ref{t1} for $H^{n-1}_\upsilon$, we will argue using induction on $n$ that $\alpha=0$. As observed earlier, the case $n=2$ is covered by the main theorem in \cite{BD3}. So the first case to consider is $n=3$. Using Theorem \ref{t4} and \eqref{7743048765898} we get
\begin{equation}
\label{7743048765898newww}
K(\delta)\lesssim_{\epsilon,s}\delta^{-\epsilon-\epsilon(\nu)}\sup_{1\le M\le N}A_{\xi^s}(M)M^{\frac{n\xi^s}{(n-1)p}}=\delta^{-\epsilon-\epsilon(\nu)}N^{\psi(\xi^{s})+\frac{n\xi^{s}}{(n-1)p}}.
\end{equation}
Since \eqref{7743048765898newww} (with $s=s_0$) holds for arbitrarily small $\delta$ and $\epsilon$, we further get by invoking \eqref{o54mitnvpcmi2eoptunv890cixw0qygfd390} that
\begin{equation}
\label{inv14}
\gamma\le \psi(\xi^{s_0})+\frac{n\xi^{s_0}}{(n-1)p}+\epsilon(\nu).
\end{equation}
Combining \eqref{inv15} and \eqref{inv14}  we find
$$\gamma(\frac{1-\xi}{1-2\xi}-\frac{\xi(2\xi)^{s_0}}{1-2\xi})\le \psi(1)+2^{s_0}\epsilon(\nu)+\frac{2\eta}{1-2\xi}(1-(2\xi)^{s_0})+\frac{n}{(n-1)p}(2\xi)^{s_0},$$
which  contradicts \eqref{mamatata}. Thus $\alpha=0$ and Theorem \ref{t1} is proved for $n=3$ and $p>4$.

Assume now that $n\ge 4$ and that Theorem \ref{t1} was proved in dimension  $n-2$. To prove Theorem \ref{t1} in $\R^n$ for $p>\frac{2(n+1)}{n-1}$, it suffices to prove it for $\frac{2(n+1)}{n-1}<p<\frac{2(n-1)}{n-3}$. Note that in this range we have $p<\frac{2(d+1)}{d-1}$ for $d=n-2$, in particular \eqref{e42} holds. Thus Theorem \ref{t4} is applicable due to our induction hypothesis and we reach a contradiction as in the case $n=3$ discussed above.
\bigskip

It remains to see why Theorem \ref{t1} holds for the endpoint $p=p_n=\frac{2(n+1)}{n-1}$. Via a localization argument, $K_{p,n,\upsilon}(\delta)$ is comparable to the best constant $K_{p,n,\upsilon}^{*}(\delta)$ that makes the following inequality true for each $N$-ball $B_N$ and each $f$ Fourier supported in $\A_\delta$
\begin{equation}
\label{inv50}
\|f\|_{L^p({B_N})}\le K_{p,n,\upsilon}^{*}(\delta)(\sum_{\theta\in \P_\delta}\|f_\theta\|_{L^p(\R^n)}^p)^{1/p}.
\end{equation}
It suffices now to invoke Theorem \ref{t1} for $p>\frac{2(n+1)}{n-1}$ together with
$$\|f\|_{L^{p_n}({B_N})}\lesssim \|f\|_{L^{p}({B_N})}N^{\frac{n}{p_n}-\frac{n}{p}}\;\;\;\text{(by H\"older's inequality)},$$
$$\|f_\theta\|_{L^p(\R^n)}\lesssim N^{\frac{n+1}{2p}-\frac{n+1}{2p_n}}\|f_\theta\|_{L^{p_n}(\R^n)}\;\;\;\text{(by Bernstein's inequality)},$$
and then to let $p\to p_n$.

\bigskip

\section{The proof of Theorem \ref{t1:l2notlp}}

\label{sec:Str}

We start this section by explaining  the numerology in Theorem \ref{t1:l2notlp}, in particular the origin of the critical index $\frac{2(n+1-d(\upsilon))}{n-1-d(\upsilon)}$.
There are two examples to consider.

Example 1 relates to the fact that the essentially sharp estimate
$$|\widehat{d\sigma_{H^{n-1}_\upsilon}}(x)|\lesssim (1+|x|)^{\frac{1-n}{2}}$$
is universal, it does not depend on the signature of $H^{n-1}_\upsilon$. Using $f$ such that $\widehat{f}$ is (a smooth approximation of) the characteristic function of the $\delta$ neighborhood of $H^{n-1}_\upsilon$,  leads to the lower bound
\begin{equation}
\label{eweinn1}
K_{n,p,\upsilon}^{(2)}(\delta)\gtrsim \delta^{-\frac{n-1}4+\frac{n+1}{2p}},\;\;\;p\ge\frac{2(n+1)}{n-1},
\end{equation}
which was shown to be sharp in \cite{BD3} in the elliptic case.

In the non elliptic case we have that $d(\upsilon)\ge 1$ and $H^{n-1}_\upsilon$ will contain a compact subset $V$ of an affine subspace of dimension $d(\upsilon)$.

Example 2 is concerned with the case when $\widehat{f}$ is the characteristic function of  the $\delta$ neighborhood of $V$. A standard computation shows that for  this $f$ we have
$$\|f\|_p\sim \delta^{d(\upsilon)(\frac1{2p}-\frac14)}(\sum_{\theta}\|f_\theta\|_p^2)^{1/2},\;\;\;p\ge 2$$
which leads to
\begin{equation}
\label{eweinn2}
K_{n,p,\upsilon}^{(2)}(\delta)\gtrsim \delta^{d(\upsilon)(-\frac14+\frac1{2p})},\;\;\;p\ge 2.
\end{equation}
Now \eqref{eweinn1} and \eqref{eweinn2} suggest that
$$K_{n,p,\upsilon}^{(2)}(\delta)\sim \max\{\delta^{-\frac{n-1}4+\frac{n+1}{2p}},\delta^{d(\upsilon)(-\frac14+\frac1{2p})}\},\;\;\;p\ge 2.$$
We will prove that this is indeed correct. Note that there is a regime change precisely at $p=\frac{2(n+1-d(\upsilon))}{n-1-d(\upsilon)}$.

It is worth mentioning that Example 1 and 2 above also apply (with the correct change in numerology) to the case of $l^p$ decouplings in Theorem \ref{t1}. However,  one may check that when $p\ge 2\frac{n+1}{n-1}$ the lower bound coming from Example 1 always dominates the one produced by Example 2.

\bigskip

The proof of Theorem \ref{t1:l2notlp} follows via induction on the dimension $n$. The case $n=2$ was proved in \cite{BD3}. Assume we have proved the theorem for all $H_{\upsilon}^{n-2}$ with $\upsilon\in(\R\setminus \{0\})^{n-2}$, for some fixed $n\ge 3$.
Fix now $\upsilon\in (\R\setminus \{0\})^{n-1}$. By invoking interpolation with $L^2$ and $L^\infty$, it suffices to prove the theorem for $H_{\upsilon}^{n-1}$ when $p$ is greater than but arbitrarily close to the critical index $\frac{2(n+1-d(\upsilon))}{n-1-d(\upsilon)}$. In particular, it suffices to consider $\frac{2(n+1-d(\upsilon))}{n-1-d(\upsilon)}< p<\frac{2(n-d(\upsilon))}{n-2-d(\upsilon)}$.

\bigskip

The guiding principle is that whenever curvature is absent one uses the trivial $l^2$ decoupling for $2\le p<\infty$, which amounts to the following. Given any pairwise disjoint parallelepipeds  $S_1,\ldots,S_{M}$ in $\R^n$ we have

\begin{equation}
\label{e122}
\|f\|_p\le M^{\frac12-\frac1p}(\sum_{i=1}^{M} \|f_{S_i}\|_p^2)^{1/2}
\end{equation}
for each $2\le p\le \infty$ and each  $f$ Fourier supported in the union of the $S_i$. We will refer to this as being {\em trivial $l^2$ decoupling}. Note that this is an analogue of \eqref{e12}, which follows again by interpolation.

\bigskip

We now present the main steps in the argument.
First, our induction hypothesis implies the following result for hypersurfaces with  small (possibly zero) principal curvatures, in the style of Lemma \ref{l7}. More precisely, for a  given  hypersurface $S$ in $\R^{n-1}$ ($n\ge 3$) with $p(S)$ principal curvatures $\ge 1$, $q(S)$ principal curvatures $\le -1$ and $r(S)$ principal curvatures in $(-1,1)$,  define  $$d(S)=r(S)+\min(p(S),q(S)).$$
\begin{lemma}
\label{l7dyyfebtt7rw56w6xnt}
Assume Theorem \ref{t1:l2notlp} holds in $n-1$ dimensions.
If $\supp(\hat{f})\subset \A_\delta(S)$ and $2\le p\le \frac{2(n-d(S))}{n-2-d(S)}$ we have
\begin{equation}
\label{e6l2notlpi tvn7ytubfhgnldk'al]'l.ofj}
\|f\|_p\lesssim_\epsilon \delta^{-\epsilon}K^{(2)}_{S,p}(\delta)(\sum_{\theta\in \P_\delta}\|f_\theta\|_p^2)^{1/2},
\end{equation}
where $K^{(2)}_{S,p}(\delta)=\delta^{d(S)(-\frac14+\frac1{2p})}$.
\end{lemma}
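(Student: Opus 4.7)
The proof should mirror the two-stage scheme used to pass from Lemma \ref{l2} to Lemma \ref{l7}, with the $l^p$ decoupling replaced throughout by its $l^2$ counterpart. The first stage is a paraboloid version of the lemma (analog of Lemma \ref{l2} for $l^2$ decoupling), and the second stage is a standard induction-on-scales passage to arbitrary hypersurfaces $S$ (analog of Lemma \ref{l7}). Both stages use the induction hypothesis, namely Theorem \ref{t1:l2notlp} in dimension $n-1$.

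Stage 1 (paraboloid case). The plan is to establish the bound when $S$ is (a small piece of) a paraboloid $H^{n-2}_{\upsilon}$ in $\R^{n-1}$ whose signature has $P=p(S)$ entries with value $1$, $Q=q(S)$ entries with value $-1$, and $R=r(S)$ entries $a_1,\ldots,a_R$ with $|a_j|\le 1$ (possibly zero or small). I would proceed as follows. First, perform the trivial $l^2$ decoupling \eqref{e122} in each of the $R$ coordinate directions corresponding to degenerate curvatures, chopping each such axis into $\sim\delta^{-1/2}$ intervals. This costs a total factor of $(\delta^{-1/2})^{R(1/2-1/p)}=\delta^{R(-1/4+1/(2p))}$. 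After this reduction, I would mimic the shear argument from the proof of Lemma \ref{l2} (a change of variables $x_j\mapsto x_j+2\alpha_j a_j x_n$ that leaves balls of radius $\delta^{-1}$ essentially invariant) to assume that each resulting slab is centered on the zero level set in the degenerate directions. Each slab is then Fourier supported in a $\delta$-neighborhood of a cylinder over the nondegenerate paraboloid $H^{P+Q}_{\upsilon'}$ in $\R^{P+Q+1}$. By Fubini applied to each such cylinder and the induction hypothesis in dimension $n-1$, each slab decouples with constant $K^{(2)}_{n-1,p,\upsilon'}(\delta)$. Since $d(\upsilon')=\min(P,Q)$ and the assumed range $p\le \frac{2(n-d(S))}{n-2-d(S)}$ is precisely the subcritical range $p\le\frac{2(\max(P,Q)+2)}{\max(P,Q)}$ for $H^{P+Q}_{\upsilon'}$, one has $K^{(2)}_{n-1,p,\upsilon'}(\delta)\lesssim \delta^{-\epsilon}\delta^{\min(P,Q)(-1/4+1/(2p))}$. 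Multiplying the two factors produces $\delta^{-\epsilon}\delta^{d(S)(-1/4+1/(2p))}$, as required.

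Stage 2 (induction on scales for general $S$). Let $K^{(2)}_{S,p}(\delta)$ denote the best constant in \eqref{e6l2notlpi tvn7ytubfhgnldk'al]'l.ofj}. I would first decouple trivially into caps of size $\delta^{2/3}$:
\bas
\|f\|_p \le K^{(2)}_{S,p}(\delta^{2/3})\Bigl(\sum_{\tau\in\P_{\delta^{2/3}}}\|f_\tau\|_p^2\Bigr)^{1/2}.
\eas
Taylor's formula guarantees that on each such cap $\tau$, $S$ is within $\delta$ of a paraboloid whose principal curvatures match those of $S$ up to multiplicative constants; in particular, the signature-type numbers $p(S),q(S),r(S)$ are preserved (after the natural rescaling by $\delta^{1/3}$ in the tangent directions, the ``large'' curvatures stay $\sim 1$ while the ``small'' ones remain $\lesssim 1$). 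Applying Stage 1 to this approximating paraboloid, together with the usual parabolic rescaling at scale $\delta^{1/3}$, gives
\bas
\|f_\tau\|_p \lesssim_\epsilon \delta^{-\epsilon/3}\,(\delta^{1/3})^{d(S)(-1/4+1/(2p))}\,\Bigl(\sum_{\theta\in\P_\delta,\,\theta\subset\tau}\|f_\theta\|_p^2\Bigr)^{1/2}.
\eas
Combining these two bounds yields the recursion
\bas
K^{(2)}_{S,p}(\delta)\le C_\epsilon \delta^{-\epsilon/3}(\delta^{1/3})^{d(S)(-1/4+1/(2p))}K^{(2)}_{S,p}(\delta^{2/3}),
\eas
and iterating $O(\log\log\delta^{-1})$ times gives the stated bound $\delta^{-\epsilon}\delta^{d(S)(-1/4+1/(2p))}$.

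The main obstacle. The analytically serious point is Stage 1, and in particular the verification that after trivially decoupling out the $R$ flat directions one really is in the situation of a cylinder over a nondegenerate paraboloid in one fewer dimension. The shear trick works transparently when the ``flat'' directions are coordinate axes, but for an abstract paraboloid $H^{n-2}_\upsilon$ whose small-curvature directions are not necessarily coordinate aligned one has to perform a linear change of frequency variables first; one must check that the induced change of spatial variables distorts the ball $B_{\delta^{-1}}$ by at most a bounded factor and preserves the weight class \eqref{wnret78u-0943mt7-w,-,1ir8gnrnfyqerbtf879wumweryermf,}. A second, milder, issue is that the induction hypothesis is stated for $H^{n-2}_{\upsilon'}$ (nonzero entries), so in Stage 2 when invoking it one uses the full range $p\ge 2$ provided by Theorem \ref{t1:l2notlp}; the endpoint $p=\frac{2(n-d(S))}{n-2-d(S)}$ of the target range corresponds exactly to the critical index of the reduced paraboloid, so no range mismatch arises.
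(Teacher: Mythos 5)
Your proposal is correct and follows essentially the same route as the paper: reduce to a paraboloid $H^{n-2}_{\upsilon'}$ via the induction-on-scales/Taylor argument of Lemma \ref{l7}, trivially $l^2$-decouple the $r(S)$ small-curvature directions at cost $\delta^{r(S)(-\frac14+\frac1{2p})}$, apply the inductive hypothesis to the nondegenerate cross sections (with $d(S')=\min(p(S),q(S))$, living in $\R^{n-1-r(S)}$), and piece the two contributions together by Fubini. Your check that $p\le\frac{2(n-d(S))}{n-2-d(S)}$ is precisely the subcritical range for the cross-sectional paraboloid is the same computation the paper's proof relies on.
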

\begin{proof}

Of course, there is nothing special about 1 and -1. This result is about ``large" versus ``small" principal curvatures. As before, we may assume $S=H_{\upsilon'}^{n-2}$ where $\upsilon'$ has $p(S)$ entries equal to $1$,  $q(S)$ entries equal to $-1$ and $r(S)$ entries in $(-1,1)$. The case $r(S)=0$ is an immediate consequence of our induction hypothesis. If $r(S)\ge 1$, perform first a trivial $l^2$ decoupling in the direction of each of the $r(S)$ ``small" principal curvatures. This will contribute a factor of $\delta^{r(S)(-\frac14+\frac1{2p})}$ to $K^{(2)}_{S,p}(\delta)$, according to \eqref{e122}. Then use the induction hypothesis for the cross sections $S'$ corresponding to the remaining $p(S)+q(S)$ curvatures. Note that $d(S')=\min(p(S),q(S))$ and that $S'$ is a hypersurface with nonzero Gaussian curvature in $\R^{n-1-r(S)}$. As a result, the critical index for $S'$ is $\frac{2(n-d(S))}{n-2-d(S)}$, and the induction hypothesis is indeed applicable. Thus the cross sections contribute $\delta^{d(S')(-\frac14+\frac1{2p})}$ to $K^{(2)}_{S,p}(\delta)$. Finally, the two contributions to $K^{(2)}_{S,p}(\delta)$ can be pieced together by invoking Fubini, as explained in the proof of Lemma \ref{l2}.
\end{proof}
Next, we record the following analogue of both Proposition \ref{hcnyf7yt75ycn8u32r8907n580-9=--qc mvntvu5n8t} from  here and Proposition 5.5 from \cite{BD3}.
\begin{proposition}
\label{hcnyf7yt75ycn8u32r8907n580-9=--qc mvntvu5n8tnewwww}Assume Theorem \ref{t1:l2notlp} holds in  dimensions less than or equal to $n-1$. Let $2\le p\le\frac{2(n-d(\upsilon))}{n-2-d(\upsilon)}$.
For each $\epsilon$ there exist constants $C_\epsilon$, $C_n$ such that for each $R>1$ and $K\ge 1$
$$\|\widehat{gd\sigma}\|_{L^p(w_{B_R})}\le C_\epsilon[(\sum_{\alpha\subset H^{n-1}_\upsilon\atop{\alpha:\frac{1}{K}-\text{ cap}}}\|\widehat{g_{\alpha}d\sigma}\|_{L^p(w_{B_R)}}^2)^{1/2}+K^{d(\upsilon)(\frac1{2p}-\frac14)+\epsilon}(\sum_{\beta\subset H^{n-1}_\upsilon\atop{\beta:\frac{1}{K^{1/2}}-\text{ cap}}}\|\widehat{g_{\beta}d\sigma}\|_{L^p(w_{B_R})}^2)^{1/2}]+$$$$+K^{C_n}C_{p,n,\upsilon}^{(2)}(R^{-1},K^{-n})(\sum_{\Delta\subset H^{n-1}_\upsilon\atop{\Delta:\frac1{R^{1/2}}-\text{ cap}}}\|\widehat{g_{\Delta}d\sigma}\|_{L^p(w_{B_R})}^2)^{1/2}.$$
Here, in analogy with earlier notation, $C_{p,n,\upsilon}^{(2)}$ is the multilinear version of $K_{p,n,\upsilon}^{(2)}$.
\end{proposition}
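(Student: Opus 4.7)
The plan is to mirror the Bourgain--Guth proof of Proposition \ref{hcnyf7yt75ycn8u32r8907n580-9=--qc mvntvu5n8t}, modifying only two ingredients. First, the local cap sums are arranged in $\ell^2$ rather than $\ell^p$, which at the dominant-cap step relies on the elementary inequality $(\sum a_\alpha^p)^{1/p}\le(\sum a_\alpha^2)^{1/2}$ (valid for $p\ge 2$ and $a_\alpha\ge 0$), and at the global reassembly on Minkowski's inequality in the form $\sum_B (\sum_\beta a_{B,\beta}^2)^{p/2}\le\bigl(\sum_\beta(\sum_B a_{B,\beta}^p)^{2/p}\bigr)^{p/2}$ (valid for $p\ge 2$). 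Second, the hypersurface decoupling used in the cylinder case is the $\ell^2$ Lemma \ref{l7dyyfebtt7rw56w6xnt} in place of Lemma \ref{l7}. On each $K$-ball $B_K\subset B_R$ we regard each $|\widehat{g_\alpha d\sigma}|$ as essentially constant and run the standard three-case analysis: either a single $\tfrac{1}{K}$-cap $\alpha^*$ dominates pointwise, or there exist $n$ caps that are $K^{-n}$-transverse, or all pointwise-significant $\tfrac{1}{K}$-caps lie in a $K^{-1/2}$-slab $S_\E$ around some hyperplane $\E$ in the $(\xi_1,\ldots,\xi_{n-1})$-space.

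The dominant-cap case combined with the $\ell^p\hookrightarrow\ell^2$ inequality yields the first term of the proposition, and the transverse case yields the multilinear term with constant $C^{(2)}_{p,n,\upsilon}(R^{-1},K^{-n})$ exactly as in the earlier proposition. The only genuinely new step is the slab case. The caps $\beta$ now lie in the $\tfrac{1}{K}$-neighborhood of the cross-section $S_{\E,\upsilon}$, an $(n-2)$-dimensional hypersurface inside the $(n-1)$-dimensional hyperplane $\{(\xi_1,\ldots,\xi_n):(\xi_1,\ldots,\xi_{n-1})\in\E\}$. Lemma \ref{l:121}, applied to the invertible diagonal matrix $\mathrm{diag}(\upsilon)$, forces at most one principal curvature of $S_{\E,\upsilon}$ to be of small magnitude, and a Cauchy interlacing analysis of the restriction of $\sum\upsilon_i\xi_i^2$ to $\E$ bounds the signature $(p',q',r')$ of $S_{\E,\upsilon}$ by $r'\in\{0,1\}$, $p'\in\{p(\upsilon)-1,p(\upsilon)\}$, $q'\in\{q(\upsilon)-1,q(\upsilon)\}$; a short case check yields the crucial bound $d(S_{\E,\upsilon})=r'+\min(p',q')\le d(\upsilon)$.

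The inequality $d(S_{\E,\upsilon})\le d(\upsilon)$ ensures that the $\ell^2$ decoupling constant for $S_{\E,\upsilon}$ at scale $\delta=1/K$, produced by Lemma \ref{l7dyyfebtt7rw56w6xnt} (extended past its stated subcritical index into the full proposition $p$-range via the inductive hypothesis in the standard two-branch way), is bounded by the coefficient appearing in the second term of the proposition. Combining this with Fubini in the short $K^{-1/2}$-cylinder direction, exactly as in the proof of Lemma \ref{l2}, yields the required pointwise bound on $B_K$ for $\bigl\|\sum_{\beta\subset\pi^{-1}(S_\E)\cap H^{n-1}_\upsilon}\widehat{g_\beta d\sigma}\bigr\|_{L^p(B_K)}$. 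Raising everything to the $p$-th power, summing over $B_K\subset B_R$, invoking the Minkowski inequality above to recombine the per-ball $\ell^2$-norms into a global $\ell^2$-norm, and applying the standard localization to replace $L^p(B_R)$ by $L^p(w_{B_R})$, completes the proof. The main point of care is the signature analysis giving $d(S_{\E,\upsilon})\le d(\upsilon)$ together with the check that the $\ell^2$ decoupling constant for $S_{\E,\upsilon}$ over the entire proposition $p$-range is dominated by the stated coefficient; everything else is bookkeeping along the lines of Proposition \ref{hcnyf7yt75ycn8u32r8907n580-9=--qc mvntvu5n8t} and \cite{BD3}.
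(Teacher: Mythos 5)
Your proposal is correct and follows essentially the same route as the paper: the standard Bourgain--Guth three-case decomposition with the $\ell^2$ bookkeeping (Minkowski over the $K$-balls) imported from \cite{BD3}, with the only genuinely new ingredient being the bound $d(S_{\E,\upsilon})\le d(\upsilon)$ for the cross-sections, fed into Lemma \ref{l7dyyfebtt7rw56w6xnt}. The paper establishes that signature bound by an explicit dimension-count contradiction with the eigenspaces $X_\pm$, $X'_\pm$, $X'_0$, which is equivalent to your Cauchy interlacing argument; your additional remark that the supercritical branch of the $(n-1)$-dimensional induction hypothesis still yields an exponent dominated by $K^{d(\upsilon)(\frac14-\frac1{2p})}$ throughout $p\le\frac{2(n-d(\upsilon))}{n-2-d(\upsilon)}$ is a point the paper leaves implicit, and it checks out.
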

\begin{proof}
This follows as before by using the Bourgain--Guth induction on scales, and our induction hypothesis for arbitrary cross sections $S'$ of $H^{n-1}_\upsilon$ with a "vertical" hyperplane. It suffices to prove that $d(S')\le d(\upsilon)$. This follows via an argument similar to the one in Lemma \ref{l:121}, we sketch it briefly.
Assume for simplicity that $\upsilon_i=1$ for $1\le i\le  p(\upsilon)$ and $\upsilon_i=-1$ for $p(\upsilon)+1\le i\le n-1$. Assume also  that the hyperplane contains the origin, and let $H'$ be its intersection with the hyperplane $x_n=0$.  Call $A:\R^{n-1}\to\R^{n-1}$ the linear transformation so that $H_{\upsilon}^{n-1}$ is (part of) the graph of $x\mapsto \langle Ax, x\rangle$, $x\in\R^{n-1}$. Obviously $\upsilon_i$ are its eigenvalues and $\langle e_i \rangle$ are the corresponding eigenspaces.
Define
$$X_+=\langle e_i:\;1\le i\le p(\upsilon)\rangle,\;\;\;\;X_-=\langle e_i:\;p(\upsilon)+1\le i\le n-1\rangle.$$
Similarly, let $A':H'\to H'$ be the linear transformation so  that $S'$ is (part of) the graph of $x\mapsto\langle A'x,x\rangle$ over $x\in H'$.  Let $e_1',\ldots,e_{n-2}'$ be an appropriate orthonormal basis of $H'$ consisting of  eigenvectors of $A'$ with eigenvalues $\upsilon_i'$. Assume that $\upsilon_i'\ge 1$ for $1\le i\le  p(S')$, $\upsilon_i'\le -1$ for $p(S')+1\le i\le p(S')+q(S')$ and $|\upsilon_i'|< 1$ for $p(S')+q(S')+1\le i\le p(S')+q(S')+r(S')=n-2$.
Define
$$X_+'=\langle e_i':\;1\le i\le p(S')\rangle,\;\;\;\;X_-'=\langle e_i':\;p(S')+1\le i\le p(S')+q(S')\rangle,$$$$X_0'=\langle e_i':\;p(S')+q(S')+1\le i\le n-2\rangle.$$
It suffices now to prove that $p(S')+r(S')\le p(\upsilon)$ and $q(S')+r(S')\le q(\upsilon)$. Assume  for contradiction that $p(S')+r(S')> p(\upsilon)$. Then $(X_+'\oplus X_0')\cap X_-$ must contain a unit vector $x$. Note first that since $x\in H'$, we have $\langle Ax,x\rangle=\langle A'x,x\rangle$. On the other hand, since $x\in  X_-$ we must have $\langle Ax,x\rangle = -1$, while $x\in X_+'\oplus X_0'$ implies that $\langle Ax,x\rangle > -1$. The contradiction is now obvious.
\end{proof}

Iterations of Proposition \ref{hcnyf7yt75ycn8u32r8907n580-9=--qc mvntvu5n8tnewwww} show the equivalence between $C_{p,n,\upsilon}^{(2)}$ and $K_{p,n,\upsilon}^{(2)}$. Since we work with $p>\frac{2(n+1-d(\upsilon))}{n-1-d(\upsilon)}$, we also have that $p>\frac{2(n+1)}{n-1}$. In particular, the parameter $\xi$ from \eqref{jjjeryyyrdewe678wer6wr5drtftdujhdji} is $<\frac12$, as desired. The rest of the argument continues exactly as in the elliptic case from \cite{BD3}, see also the  $l^p$ decouplings from the previous section. This is due to the fact that when $p>\frac{2(n+1-d(\upsilon))}{n-1-d(\upsilon)}$, \eqref{e41} and \eqref{e6l2notlp} are identical.
\bigskip

\section{Multilinear versus linear decoupling for curves}
In this section we start the proof of  Theorem \ref{t2}. It is easy to see that for each $t_0\in [0,1]$, there is an affine transformation $L_{t_0}$ of $\R^n$, more precisely a rotation followed by a translation, such that
$$\begin{cases}L_{t_0}(\Phi(t_0))=\textbf{0} \\ L_{t_0}(\Phi'(t_0))\perp\langle e_2,\ldots,e_n\rangle\\ \hfill L_{t_0}(\Phi''(t_0))\perp\langle e_3,\ldots,e_n\rangle \\  \ldots\ldots\ldots\ldots\ldots\ldots\\  L_{t_0}(\Phi^{(n-1)}(t_0))\perp\langle e_n\rangle \end{cases}$$
In this new local system of coordinates, the equation of the curve near $t=0$ becomes
$$\tilde{\Phi}(t)=(C_{1,1}t+C_{1,2}t^2+\ldots+C_{1,n}t^n,C_{2,2}t^2+\ldots+C_{2,n}t^n,\ldots,C_{n,n}t^n)+O(t^{n+1},t^{n+1},\ldots,t^{n+1}).$$
The coefficients $C_{i,j}$  depend on $t_0$ but satisfy $\kappa\le |C_{i,i}|\le \kappa^{-1}$ and $|C_{i,j}|\le \kappa^{-1}$ for $i<j$, with $\kappa>0$ independent of $t_0$, due to the Wronskian condition.

By invoking a simple induction on scales argument, it suffices to prove Theorem \ref{t2} for the special curves $\Phi_{\textbf{C}}$, $\textbf{C}=(C_{i,j})_{1\le i\le j\le n}$
\begin{equation}
\label{e1}
\Phi_{\textbf{C}}(t)=(C_{1,1}t+C_{1,2}t^2+\ldots+C_{1,n}t^n,C_{2,2}t^2+\ldots+C_{2,n}t^n,\ldots,C_{n,n}t^n),
\end{equation}
with $C_{i,j}$ as above.  The estimates will of course depend only on $\kappa$. To see this, for $\delta<1$ and $p\le 4n-2$, let $K_p^*(\delta)$ be the smallest constant such that for each   $f$ with Fourier support in the neighborhood $\A_\delta$ of $\Phi$ we have
$$\|f\|_{p}\le K_p^*(\delta)(\sum_{\theta\in \P_\delta}\|f_\theta\|_{p}^2)^{\frac12}.$$

First, for each such  $f$
\begin{equation}
\label{e8}
\|f\|_{p}\le K_p^*(\delta^{\frac{n}{n+1}})(\sum_{\tau\in \P_{\delta^{\frac{n}{n+1}}}}\|f_\tau\|_{p}^{2})^{\frac12}.
\end{equation}
The previous discussion shows that  the portion of $\Phi$ inside a given $\tau\in \P_{\delta^{\frac{n}{n+1}}}$ is within $\delta$ from a curve \eqref{e1}. Let $a$ be  the left endpoint of the interval of length $\delta^{\frac1{n+1}}$ corresponding to $\tau$. We will perform  rescaling as follows.
Consider the linear transformation
\begin{equation}
\label{fuyvt6rvrgyuyioeynbf7vynciuyyf}
L_\tau(\xi_1,\ldots,\xi_n)=(\xi_1',\ldots,\xi_n')=(\frac{\xi_1-a}{\delta^{\frac1{n+1}}},\frac{\xi_2-2a\xi_1+a^2}{\delta^{\frac2{n+1}}},\frac{\xi_3-3a\xi_2+3a^2\xi_1-a^3}{\delta^{\frac3{n+1}}},\ldots).
\end{equation}

It maps $\tau\cap \A_\delta$ into $\A_{\delta^{\frac{1}{n+1}}}$ and each $\theta\in\A_\delta$ with $\theta\subset \tau$ into some $\theta'\in\A_{\delta^{\frac{1}{n+1}}}$. Using this change of variables and Theorem \ref{t2} with $\delta$ replaced with $\delta^{\frac1{n+1}}$ we get
\begin{equation}
\label{e7}
\|f_\tau\|_{p}\lesssim_\epsilon\delta^{-\epsilon}(\sum_{\theta\in \P_\delta:\theta\subset\tau}\|f_\theta\|_{p}^2)^{\frac12}.
\end{equation}
For each $\epsilon>0$, using \eqref{e8} and \eqref{e7} we conclude the existence of $C_\epsilon$ such that for each $\delta<1$
$$K_p^*(\delta)\le C_\epsilon\delta^{-\epsilon}K_p^*(\delta^{\frac{n}{n+1}}).$$
By iteration this immediately leads to $K_p^*(\delta)\lesssim_\epsilon \delta^{-\epsilon}$.

We further observe that it suffices to consider curves $\Phi_{\textbf{C}}$ with $\textbf{C}$ equal to the identity matrix. This is because the decoupling inequality is preserved under linear transformations. For the remainder of the section we let
$$\Phi(t)=(t,t^2,\ldots,t^{n}).$$

For each dyadic interval $I\subset [0,1]$ define the extension operator
$$E_Ig(x)=\int_Ig(t)e(tx_1+t^2x_2+\ldots+t^nx_n)dt.$$
Unless specified otherwise, all intervals will be implicitly assumed to be dyadic intervals in $[0,1]$.

We will denote by $K_p(R)$ the smallest constant such that
$$\|E_{[0,1]}g\|_{L^{p}(B_R)}\le K_p(R)(\sum_{|U|=R^{-1/n}}\|E_{U}g\|_{L^{p}(w_{B_R})}^2)^{\frac{1}{2}}$$
holds true for each $g:[0,1]\to \C$ and $B_R$.

It is easy to see that $K_p(R)\sim K_p^*(R^{-1})$. In particular, Theorem \ref{t2} for some $p$ is in fact equivalent with the following inequality
\begin{equation}
\label{e11}
\|E_{[0,1]}g\|_{L^{p}(B_N)}\lesssim_{\epsilon}N^{\epsilon}(\sum_{|U|=N^{-1/n}}\|E_{U}g\|_{L^{p}(w_{B_N})}^2)^{\frac{1}{2}}.
\end{equation}

For dyadic $0<\nu<1$, denote  by $C_p(R,\nu)$ the smallest constant such that
$$\|(\prod_{i=1}^n|E_{I_i}g|)^{1/n}\|_{L^{p}(B_R)}\le C_p(R,\nu) (\prod_{i=1}^n\sum_{U\subset I_i\atop{|U|=R^{-1/n}}}\|E_{U}g\|_{L^{p}(w_{B_R})}^2)^{\frac{1}{2n}}$$
holds for each  nonadjacent dyadic intervals $I_1,\ldots,I_n\subset [0,1]$ of size $\nu$, each  $g:[0,1]\to \C$ and $B_R$. We will refer to such intervals as being $\nu$-{\em transverse}.

It is immediate that $C_p(R,\nu)\le K_p(R)$. We now show that the reverse inequality is also essentially true.
\begin{theorem}
\label{wt1}
For each $\nu$ there exists $\epsilon(\nu)$ with $\lim_{\nu\to 0}\epsilon(\nu)=0$ and $C_\nu$ such that for each $R$
$$K_p(R)\le C_{\nu} R^{\epsilon(\nu)}C_p(R,\nu).$$
\end{theorem}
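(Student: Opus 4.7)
The plan is to mirror the proof of Theorem \ref{t4} by running a Bourgain--Guth induction on scales adapted to the curve setting. Fix a large parameter $K=K(\nu)$ to be optimized later, and cover $B_R$ by balls $B$ of radius $K$, the scale on which the uncertainty principle ensures that $|E_J g|$ is essentially constant for every dyadic $J$ with $|J|=K^{-1}$. Denote this constant value by $c_J(B)$.

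Perform a greedy pigeonhole on each $B$: let $J^*$ maximize $c_J(B)$ and successively select $\nu$-separated intervals $J_1=J^*,J_2,\ldots$ among those with $c_{J_i}(B)\ge K^{-C_n}c_{J^*}(B)$. Either the process outputs $n$ such intervals (the \emph{transverse} case), in which case for $x\in B$
$$|E_{[0,1]}g(x)|\lesssim K^{C_n'}\Bigl(\prod_{i=1}^n|E_{J_i}g(x)|\Bigr)^{1/n}+(\text{negligible}),$$
or it halts with fewer than $n$ selections (the \emph{concentrated} case), in which case all significant contributions come from at most $n-1$ intervals $I_j=I_j(B)$ of length $O(\nu)$ and
$$|E_{[0,1]}g(x)|\lesssim\sum_{j}|E_{I_j(B)}g(x)|+(\text{negligible}).$$

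Raise to the $p^{\mathrm{th}}$ power and sum over $B\subset B_R$. In the transverse case, group each $J_i$ into the containing $\nu$-interval and invoke the multilinear constant to bound this contribution by $K^{C_n}C_p(R,\nu)$ times the right-hand side of the theorem. In the concentrated case, rearranging the sum over $B$'s by $\nu$-parent yields a bound by $\sum_{|I|=\nu}\|E_I g\|_{L^p(B_R)}^p$; to each such $I$ apply the parabolic rescaling \eqref{fuyvt6rvrgyuyioeynbf7vynciuyyf}, which sends an $E_I g$-decoupling at scale $R$ into a full decoupling at scale $R'=R\nu^n$ (the anisotropic image of $B_R$ is covered by $R'$-balls and reassembled via Fubini), producing a factor $K_p(R\nu^n)$. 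Altogether,
$$K_p(R)\le\alpha(\nu)K^{C_n}C_p(R,\nu)+\beta_n\,K_p(R\nu^n),$$
with $\beta_n$ an absolute constant depending only on $n$.

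Iterate $M\sim(\log R)/(\log\nu^{-n})$ times until the residual scale is $O(1)$ and $K_p(O(1))\lesssim 1$. Choosing $K=K(\nu)$ so that $\log K=o(\log\nu^{-n})$ as $\nu\to 0$, the cumulative multiplicative factor in front of the telescoped sum of multilinear terms is $R^{\epsilon(\nu)}$ with $\epsilon(\nu)\to 0$. The smaller-scale multilinear constants $C_p(R\nu^{nm},\nu)$ arising along the iteration are absorbed into $C_p(R,\nu)$ either by a secondary induction on $R$ (assuming the conclusion for smaller radii and plugging back in) or by the essentially trivial monotonicity $C_p(R',\nu)\lesssim_\nu C_p(R,\nu)$ for $R'\le R$. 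The main obstacle will be the $\ell^2$-combinatorics in the concentrated case: the rescaling $L_I$ is highly anisotropic with scaling factors $\nu,\nu^2,\ldots,\nu^n$, so $L_I(B_R)$ is an elongated box rather than a ball, and one must carefully cover it by $R'$-balls while preserving the $\ell^2$-structure over caps $U$ of size $R^{-1/n}$, in the spirit of the induction on scales at the beginning of Section~4.
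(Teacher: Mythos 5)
Your argument is essentially the paper's: the transverse/concentrated dichotomy on balls, the rescaling \eqref{fuyvt6rvrgyuyioeynbf7vynciuyyf} of each $\nu$-interval producing a factor $K_p(R\nu^n)$, and the $\sim \frac{1}{n}\log_{\nu^{-1}}R$-fold iteration in which only the $\nu$-independent constant on the recursive term compounds are exactly Propositions \ref{p3} and \ref{p4} and their iteration. The one (cosmetic) difference is that, the curve being one-dimensional with no intermediate lower-dimensional case, the paper skips the $K$-ball pigeonholing entirely and gets the dichotomy from a direct pointwise triangle inequality at scale $\nu$, with the concentrated term dominated by $C(\sum_{|I|=\nu}|E_Ig|^2)^{1/2}$ for $C$ independent of $\nu$ -- which sidesteps the bookkeeping your greedy selection at the auxiliary scale $K^{-1}$ would require.
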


The proof of the theorem is a simple version of the Bourgain--Guth induction on scales \cite{BG}. We will prove a few preliminary results.
\begin{proposition}
\label{p3}
For each $R$-ball $B_R$ we have
$$\|E_{[0,1]}g\|_{L^{p}(B_R)}\le C_{p}(R,\nu)C_\nu(\sum_{|U|=R^{-1/n}}\|E_{U}g\|_{L^{p}(w_{B_R})}^{2})^{1/2}+$$
$$+C(\sum_{|I|=\nu}\|E_{I}g\|_{L^p(B_R)}^{2})^{1/2}.$$
The constant $C$ is independent of $\nu$, it only depends on $n$ and $p$.
\end{proposition}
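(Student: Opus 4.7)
The plan is a Bourgain--Guth broad/narrow dichotomy at scale $\nu$. Cover $B_R$ by balls $B$ of radius $\nu^{-1}$; for $|I|=\nu$, $E_I g$ has Fourier support on a curve segment of diameter $O(\nu)$, so by the uncertainty principle $|E_I g|$ is essentially constant on each such $B$. Denote this value $c_I(B)$ and let $I^\ast=I^\ast(B)$ maximize it. Call $I$ \emph{significant for $B$} if $c_I(B)\geq \nu^{M}c_{I^\ast}(B)$, for a constant $M=M(n,p)$ to be chosen large. The combinatorial dichotomy is: either there exist $n$ pairwise nonadjacent significant intervals $I_1,\ldots,I_n$, or the set $S_B$ of significant intervals satisfies $|S_B|\leq 2(n-1)$. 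Indeed, a maximal run of $k$ mutually adjacent intervals in $S_B$ yields $\lceil k/2\rceil$ pairwise nonadjacent ones by choosing alternates, so if $n$ of them cannot be assembled in total then summing over runs gives $|S_B|=\sum k_i\leq 2(n-1)$.

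In the \emph{narrow case} $|S_B|=O_n(1)$, and for $x\in B$ one has $|E_{[0,1]}g(x)|\lesssim \max_{I\in S_B}|E_I g(x)|$, since the contribution from non-significant $I$'s is controlled by $\nu^{M-1}c_{I^\ast}(B)$, which is absorbed once $M\geq 1$. Raising to the $p$-th power, summing over the $O(1)$ intervals in $S_B$ and over balls $B\subset B_R$, yields
\begin{equation*}
\|E_{[0,1]}g\|_{L^p(B_R,\,\mathrm{narrow})}^p\lesssim \sum_{|I|=\nu}\|E_I g\|_{L^p(B_R)}^p\leq \Big(\sum_{|I|=\nu}\|E_I g\|_{L^p(B_R)}^2\Big)^{p/2},
\end{equation*}
where the last step uses the embedding $l^p\hookrightarrow l^2$ for $p\geq 2$. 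The implied constant depends only on $n$ and $p$, producing the second term in the claim with $\nu$-independent constant.

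In the \emph{broad case}, the inequality $c_{I^\ast}(B)\lesssim \prod_{j=1}^n c_{I_j}(B)^{1/n}$ combined with $|E_{[0,1]}g(x)|\leq \sum_{|I|=\nu}c_I(B)\leq \nu^{-1}c_{I^\ast}(B)$ gives on $B$ the pointwise bound
\begin{equation*}
|E_{[0,1]}g(x)|\lesssim \nu^{-1}\prod_{j=1}^n|E_{I_j}g(x)|^{1/n}.
\end{equation*}
Raising to the $p$-th power, grouping balls by their chosen tuple, summing over the $O(\nu^{-n})$ possible $\nu$-transverse $n$-tuples and integrating over $B_R$ bounds the broad contribution by $\nu^{-1-n/p}\max_{I_1,\ldots,I_n\text{ transverse}}\|(\prod_j|E_{I_j}g|)^{1/n}\|_{L^p(B_R)}$. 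Invoking the definition of $C_p(R,\nu)$ and the AM--GM estimate $\prod_j(\sum_{U\subset I_j,|U|=R^{-1/n}}\|E_U g\|_{L^p(w_{B_R})}^2)^{1/(2n)}\leq (\sum_{|U|=R^{-1/n}}\|E_U g\|_{L^p(w_{B_R})}^2)^{1/2}$ (valid since the pieces are disjoint) yields the first term with $C_\nu=\nu^{-O_n(1)}$.

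The main technical point is making the uncertainty-principle step compatible with the weighted norm $w_{B_R}$ on the right-hand side. This is handled in the usual way by replacing sharp indicators of $B$ and $B_R$ with smooth Schwartz-tailed analogues, exactly as in the proofs of the analogous results in \cite{BD3}; the resulting loss is harmless since it is absorbed into the weighted norms. No new difficulty arises beyond what is needed in the hypersurface analogue from the previous section.
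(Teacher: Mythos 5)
Your proof is correct and is essentially the paper's argument: a Bourgain--Guth broad/narrow dichotomy at scale $\nu$, with the pigeonholing on pairwise nonadjacent significant intervals giving either $n$ transverse intervals of comparable size (broad) or $O_n(1)$ dominant intervals (narrow). The only cosmetic difference is that the paper runs the dichotomy pointwise at each $x\in B_R$ and bounds the narrow term by the square function $(\sum_I|E_Ig(x)|^2)^{1/2}$ before integrating, so the uncertainty-principle localization to $\nu^{-1}$-balls in your write-up is not actually needed.
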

\begin{proof}
We start by writing
$$E_{[0,1]}g=\sum_{I:\nu-\text{interval }}E_Ig.$$
 It is rather immediate that for each $x\in B_R$
$$|E_{[0,1]}g(x)|\le C\max_{I:\nu-\text{interval}}|E_Ig(x)|+C_\nu\sum_{I_1,\ldots,I_{n}:\;\nu-\text{transverse}}(\prod_{j=1}^{n}|E_{I_j}g(x)|)^{\frac1{n}}\le$$
$$C(\sum_{I:\nu-\text{interval}}|E_Ig(x)|^{2})^{1/2}+C_\nu\sum_{I_1,\ldots,I_{n}:\;\nu-\text{transverse}}(\prod_{j=1}^{n}|E_{I_j}g(x)|)^{\frac1{n}}.$$
The result now follows by integrating the $p$-th power.
\end{proof}
We now rescale to get the following version.
\begin{proposition}
\label{p4}
Let $H$ be a $\delta$- interval in $[0,1]$.
For each $R\gtrsim \delta^{-n}$ and each $R$-ball $B_R$ we have
$$\|E_{H}g\|_{L^{p}(B_R)}\le C_{\nu}C_p(R\delta^n,\nu)(\sum_{|U|=R^{-1/n}}\|E_{U}g\|_{L^{p}(w_{B_R})}^{2})^{1/2}+$$
$$+C(\sum_{J:\delta\nu-\text{interval}\atop{J\subset H}}\|E_{J}g\|_{L^{p}(B_R)}^{2})^{1/2}.$$
\end{proposition}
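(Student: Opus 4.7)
\medskip

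\noindent\emph{Proof plan for Proposition \ref{p4}.} The proposition should follow from Proposition \ref{p3} by an affine change of variables that maps the arc of the moment curve $\Phi(t)=(t,t^2,\dots,t^n)$ over $H=[a,a+\delta]$ onto the arc over $[0,1]$, and then covering the resulting anisotropic region in physical space by balls of the natural scale.

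Concretely, substitute $t=a+\delta s$ with $s\in[0,1]$. A direct expansion of $\Phi(t)\cdot x$ in powers of $s$ shows that there is a unipotent upper triangular matrix $M_a$ (with entries built from binomial coefficients and powers of $a$) such that, if we let $\tilde g(s)=\delta\, g(a+\delta s)$ and $\tilde x=D_\delta M_a x$ with $D_\delta=\mathrm{diag}(\delta,\delta^2,\dots,\delta^n)$, then
$$E_H g(x)=e(\phi(x))\,E_{[0,1]}\tilde g(\tilde x)$$
for some real-valued phase $\phi(x)$ that is irrelevant for $L^p$ norms. The linear map $x\mapsto\tilde x$ has determinant of size $\delta^{n(n+1)/2}$ and sends $B_R$ onto a parallelepiped $\tilde B$ essentially of dimensions $R\delta\times R\delta^2\times\cdots\times R\delta^n$. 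Intervals $U\subset H$ of length $R^{-1/n}$ correspond to subintervals $U'\subset[0,1]$ of length $R^{-1/n}\delta^{-1}=(R\delta^n)^{-1/n}$, and intervals $J\subset H$ of length $\nu\delta$ correspond to subintervals $I'\subset[0,1]$ of length $\nu$, which is exactly the pairing we want.

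Now cover $\tilde B$ by finitely overlapping balls $B_{R'}$ of radius $R'=R\delta^n$, which is the smallest side-length of $\tilde B$ (and $R'\gtrsim 1$ by the hypothesis $R\gtrsim\delta^{-n}$, so Proposition \ref{p3} at scale $R'$ is applicable). On each such $B_{R'}$ we apply Proposition \ref{p3} to $E_{[0,1]}\tilde g$ with the transverseness parameter $\nu$, raise to the $p$-th power, and sum over the cover. The $p$-th power sum of the $L^p$-norms on $\{B_{R'}\}$ reconstitutes $\|E_{[0,1]}\tilde g\|_{L^p(\tilde B)}^p$; for the square-function terms on the right, since $p\ge 2$, Minkowski's inequality in $\ell^{p/2}$ swaps the $\ell^p$ sum over balls with the $\ell^2$ sum over intervals, at which point the weighted cover reassembles into $\|\cdot\|_{L^p(w_{\tilde B})}$. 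This yields
$$\|E_{[0,1]}\tilde g\|_{L^p(\tilde B)}\lesssim C_\nu C_p(R\delta^n,\nu)\!\!\!\sum_{|U'|=(R\delta^n)^{-1/n}}^{\!\!\!\!\!\!1/2}\!\!\!\!\|E_{U'}\tilde g\|_{L^p(w_{\tilde B})}^2+C\!\!\sum_{|I'|=\nu}^{1/2}\!\!\|E_{I'}\tilde g\|_{L^p(\tilde B)}^2,$$
where I have abused the display to suggest $\ell^2$ sums.

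Finally, undo the change of variables: $\|E_{H}g\|_{L^p(B_R)}=\delta^{-n(n+1)/(2p)}\|E_{[0,1]}\tilde g\|_{L^p(\tilde B)}$, and similarly for each piece $E_U g$ and $E_J g$. The Jacobian factor $\delta^{-n(n+1)/(2p)}$ appears identically on both sides and cancels, yielding exactly the claimed inequality. The two mildly technical points are (a) identifying the correct triangular change of variables so that the arc over $H$ is mapped to the standard moment curve over $[0,1]$, and (b) the Minkowski swap that is available only because $p\ge 2$; both are routine in the decoupling literature and present no real obstacle.
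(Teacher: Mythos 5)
Your proposal is correct and follows essentially the same route as the paper: the generalized parabolic (affine triangular) rescaling sending the arc over $H$ to the arc over $[0,1]$, the observation that $B_R$ is mapped to a $\sim R\delta\times R\delta^2\times\cdots\times R\delta^n$ cylinder which one covers by finitely overlapping balls of radius $R\delta^n$, an application of Proposition \ref{p3} on each such ball, and a Minkowski swap (valid since $p\ge 2$) to reassemble the square-function terms before undoing the change of variables. The bookkeeping of interval lengths ($R^{-1/n}\mapsto (R\delta^n)^{-1/n}$ and $\delta\nu\mapsto\nu$) and the cancellation of the Jacobian factors match the paper's argument exactly.
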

\begin{proof}
Note that if $H=[a,a+\delta]$ then the change of variables $s=\frac{t-a}{\delta}$ shows that
$$|E_Hg(x)|=\delta|E_{[0,1]}g^{a,\delta}(x')|$$
where $g^{a,\delta}(s)=g(s\delta+a)$ and $x'=(x_1',\ldots,x_n')$ with
$$\begin{cases}x_1'=\delta(x_1+2ax_2+3a^2x_3+\ldots)\\ x_2'=\delta^2(x_2+3ax_3+\ldots)\\\ldots\ldots\ldots\\x_n'=\delta^nx_n
\end{cases}.$$
In particular
$$\|E_Hg\|_{L^p(B_R)}=\delta^{1-\frac{n(n+1)}{2p}}\|E_{[0,1]}g^{a,\delta}\|_{L^{p}(C_R)}$$
where $C_R$ is a $\sim \delta R\times \delta^2 R\times \ldots \times \delta^n R$ - cylinder. Cover $C_R$ by a family $\F$ of  $O(1)$-overlapping $\delta^nR$- balls $B_{\delta^nR}$ and write using Proposition \ref{p3} and Minkowski's inequality
$$\|E_Hg\|_{L^{p}(B_R)}\lesssim \delta^{1-\frac{n(n+1)}{2p}}(\sum_{B_{\delta^nR}\in \F}\|E_{[0,1]}g^{a,\delta}\|_{L^p(B_{\delta^nR})}^{p})^{1/p}\le$$

$$C_{\nu}C_p(\delta^nR,\nu)\delta^{1-\frac{n(n+1)}{2p}}(\sum_{B_{\delta^nR}\in \F}(\sum_{|U|=(\delta^nR)^{-1/n}}\|E_{U}g^{a,\delta}\|_{L^{p}(w_{B_{\delta^n R}})}^{2})^{p/2})^{1/p}+$$

$$+C\delta^{1-\frac{n(n+1)}{2p}}(\sum_{B_{\delta^nR}\in \F}(\sum_{|I|=\nu}\|E_{I}g^{a,\delta}\|_{L^{p}(B_{\delta^n R})}^{2})^{p/2})^{1/p}\lesssim$$
$$C_{\nu}C_p(\delta^nR,\nu)\delta^{1-\frac{n(n+1)}{2p}}(\sum_{|U|=(\delta^nR)^{-1/n}}\|E_{U}g^{a,\delta}\|_{L^{p}(w_{C_R})}^{2})^{1/2}+$$
$$+C\delta^{1-\frac{n(n+1)}{2p}}(\sum_{|I|=\nu}\|E_{I}g^{a,\delta}\|_{L^{p}(w_{C_R})}^{2})^{1/2}.$$

Changing back to the original variables gives us the desired estimate.
\end{proof}
\bigskip

We are now ready to prove  Theorem \ref{wt1}. Given $\epsilon>0$, choose $\nu$ so that $\log_{\nu^{-1}}C\ll n\epsilon$, where $C$ is the constant in Proposition \ref{p4}. Iterate Proposition \ref{p4} starting with scale $\delta=1$ until we reach scale $\delta=R^{-1/n}$.
Each iteration  lowers the scale of the intervals from $\delta$ to  ${\delta}\nu$. We thus have to iterate $\frac{\log_{\nu^{-1}} R}{n}$ times. In particular
$$\|E_{[0,1]}g\|_{L^{p}(B_R)}\lesssim\sum_{s=0}^{\frac{\log_{\nu^{-1}} R}{n}} C_{\nu}C^sC_p(R\nu^{ns},\nu)(\sum_{|U|=R^{-1/n}}\|E_{U}g\|_{L^{p}(w_{B_R})}^{2})^{1/2}$$
$$\le R^{\epsilon}C_{\nu}C_p(R,\nu)(\sum_{|U|=R^{-1/n}}\|E_{U}g\|_{L^{p}(w_{B_R})}^{2})^{1/2},$$
since $C_p(\cdot,\nu)$ is an essentially  nondecreasing function.

\bigskip

\section{The $l^2$ decoupling for curves}
To prove \eqref{e11} we develop a variant of the argument for hypersurfaces from  \cite{BD3}. That argument relied on the deep multilinear theorem of Bennett, Carbery and Tao \cite{BCT}. Our proof for curves will use the following analogous, but much simpler result, see for example Lemma 2.5 in \cite{Sang}.
\begin{proposition} Let $I_1,\ldots,I_n\subset [0,1]$ be $\nu$-transverse and let $g:[0,1]\to \C$. Then
\begin{equation}
\label{we1}
\|(\prod_{i=1}^n|E_{I_i}g|)^{1/n}\|_{L^{2n}(\R^n)}\lesssim_\nu (\prod_{i=1}^n\|g\|_{L^2(I_i)})^{1/n}
\end{equation}
\end{proposition}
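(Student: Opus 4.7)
Since the exponent $2n$ is even, I would reduce the multilinear inequality to an $L^2$ identity via Plancherel, and then extract the $L^2$ bound by means of Newton's identities together with the transversality hypothesis.

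First, raise the inequality to the $2n$-th power so the left-hand side becomes
$$\int_{\R^n}\prod_{i=1}^n|E_{I_i}g(x)|^2\,dx=\int_{\R^n}|F(x)|^2\,dx,\quad F(x):=\prod_{i=1}^n E_{I_i}g(x).$$
Writing $F$ out as an $n$-fold integral, we see that $F$ is the Fourier transform of the measure obtained by pushing $G(t_1,\dots,t_n)=\prod_i g(t_i)\,1_{\prod I_i}(t)\,dt$ forward under the polynomial map
$$\Psi:(t_1,\dots,t_n)\longmapsto (P_1(t),\dots,P_n(t)),\qquad P_k(t):=\sum_{i=1}^n t_i^k.$$
Expanding $|F|^2=F\overline F$ and integrating in $x$ first (formally producing $\delta$-functions, which is justified rigorously by a standard mollification argument or by viewing the computation as Plancherel applied to the pushforward measure) gives
$$\int|F|^2\,dx=\int_{(\prod I_i)^2}\prod_i g(s_i)\overline{g(t_i)}\prod_{k=1}^n\delta\bigl(P_k(s)-P_k(t)\bigr)\,ds\,dt.$$

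Second, I would analyze the support of this distribution. By Newton's identities, the $n$ conditions $P_k(s)=P_k(t)$ for $k=1,\dots,n$ are equivalent to the equality of elementary symmetric polynomials, hence to the multiset equality $\{t_1,\dots,t_n\}=\{s_1,\dots,s_n\}$. The $\nu$-transversality hypothesis forces the intervals $I_i$ to be pairwise disjoint with mutual separation $\gtrsim\nu$; since $s_i\in I_i$ and $t_j\in I_j$ lie in disjoint intervals whenever $i\neq j$, the only admissible matching is $t_i=s_i$ for every $i$.

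Third, I would integrate out $t$ by a change of variables. On the locus $s\in\prod I_i$ the Jacobian of $\Psi$ is the Vandermonde-type determinant
$$|J_\Psi(s)|=\det\bigl(k\,s_i^{k-1}\bigr)_{i,k}=n!\prod_{i<j}|s_j-s_i|,$$
which by transversality satisfies $|J_\Psi(s)|\gtrsim \nu^{n(n-1)/2}$. Consequently,
$$\int|F|^2\,dx=\int_{\prod I_i}\frac{\prod_i|g(s_i)|^2}{|J_\Psi(s)|}\,ds\;\lesssim_\nu\;\prod_{i=1}^n\|g\|_{L^2(I_i)}^2,$$
and taking $2n$-th roots yields the claimed inequality.

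The main technical step is justifying the $\delta$-function computation; this can be done cleanly by smoothing $1_{I_i}$ and passing to a limit, or by interpreting the integral as $\|\Psi_*(G\,dt)\|_{L^2}^2$ via Plancherel and computing the pushforward directly on the open set where $\Psi$ is a local diffeomorphism. Either way, the key input is that transversality both makes $\Psi$ a diffeomorphism on $\prod I_i$ with quantitative Jacobian lower bound and prevents distinct preimage branches from contributing.
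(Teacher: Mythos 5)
Your proof is correct and is essentially the paper's own argument: the paper proves \eqref{we1} by exactly the change of variables $(t_1,\dots,t_n)\mapsto \Phi(t_1)+\dots+\Phi(t_n)$ (your map $\Psi$) followed by Plancherel, citing Lemma 2.5 of the Ham--Lee reference for the details. Your write-up simply makes explicit the two ingredients the paper leaves implicit, namely injectivity of $\Psi$ on $\prod I_i$ (via Newton's identities and disjointness of the intervals) and the Vandermonde lower bound $|J_\Psi|\gtrsim_\nu 1$ coming from transversality.
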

 The whole argument consists of changing variables
$$(t_1,\ldots,t_n)\in I_1\times\ldots\times I_n\mapsto (\Phi(t_1)+\ldots+\Phi(t_n))\in\R^n$$
followed by Plancherel's inequality $\|\widehat{G}\|_2\lesssim \|G\|_2$. An immediate consequence is the fact that given any $f_i$ Fourier supported in a $\delta$-neighborhood of $\Phi(I_i)$, we have
\begin{equation}
\label{we2}
\|(\prod_{i=1}^n|f_i|)^{1/n}\|_{L^{2n}(\R^n)}\lesssim_\nu N^{-\frac{n-1}{2}}(\prod_{i=1}^n\|f_i\|_{L^2(\R^n)})^{1/n}.
\end{equation}
Indeed, foliate the $\delta$ neighborhood $\A_\delta$ of $\Phi([0,1])$ into translates of the curve $\Phi([0,1])$, apply \eqref{we1} to each of them and use H\"older's inequality.

Another immediate corollary of \eqref{we1} is the following multilinear version of the decoupling inequality \eqref{e11}, that follows by simply invoking $L^2$ orthogonality
\begin{equation}
\label{we10}
\|(\prod_{i=1}^n|E_{I_i}g|)^{1/n}\|_{L^{2n}(B_N)}\lesssim_\nu \|(\prod_{i=1}^n\sum_{J\subset I_i\atop{|J|=N^{-1}}}|E_{J}g|^2)^{\frac{1}{2n}}\|_{L^{2n}(w_{B_N})}
\end{equation}
There are two ways in which \eqref{we10} is stronger than \eqref{e11}. First, note that the  multilinear decoupling holds at frequency scales as small as $N^{-1}$. Second, note that by Minkowski's inequality
$$\|(\sum_J|h_J|^2)^{1/2}\|_{p}\le (\sum_J\|h_J\|_p^2)^{1/2},$$
which makes the right hand side of \eqref{we10} smaller than the right hand side of \eqref{e11}.

Using the fact that $E_{J}g$ is essentially constant on each $B_N$, then interpolating with the trivial $L^2$ bound we get that for each $2\le p\le 2n$
$$\|(\prod_{i=1}^n|E_{I_i}g|)^{1/n}\|_{L^{p}(B_N)}\lesssim_\nu[\prod_{i=1}^n (\sum_{J\subset I_i\atop{|J|=N^{-1}}}\|E_{J}g\|_{L^{p}(w_{B_N})}^2)]^{\frac{1}{2n}}.$$
Now, if we sum this over balls $B_N$ in a finitely overlapping cover of $B_{N^n}$ and then rescale,  we get 
\begin{equation}
\label{we10vrbhhybtugbtm689um896-it9v5it689itv90i}
\|(\prod_{i=1}^n|E_{I_i}g|)^{1/n}\|_{L^{p}(B_N)}\lesssim_\nu[\prod_{i=1}^n (\sum_{J\subset I_i\atop{|J|=N^{-1/n}}}\|E_{J}g\|_{L^{p}(w_{B_N})}^2)]^{\frac{1}{2n}}
\end{equation}
An application of the  Bourgain--Guth induction on scales described in the end of the previous section shows that \eqref{we10vrbhhybtugbtm689um896-it9v5it689itv90i} implies \eqref{e11} for $2\le p\le 2n$. In the remaining part of this section we will show how to bridge the gap between $p=2n$ and $p=4n-2$.

The first step in our iteration scheme is the following lemma. We will consider a partition of $\A_\delta$ into tubular regions $\tau$, each of which is a $\delta$-neighborhood of $\Phi(J)$, for some interval $J$ of length $\delta^{1/2}$. The scale $\delta^{1/2}$ we use here is much smaller than $\delta^{1/n}$, and in particular ensures that $\tau$ is essentially a $\delta^{1/2}\times \delta\times\ldots\times \delta$ cylinder. This will in turn allow for wave packet decompositions to come into play.

We will use the notation $\delta=N^{-1}$.
\begin{lemma}
Let $I_1,\ldots,I_n\subset [0,1]$ be $\nu$-transverse intervals, and assume $f_i$ are Fourier supported in $\delta$-neighborhoods of $\Phi(I_i)$. Then for each $2n\le p\le \infty$
\begin{equation}
\label{we3}
\|(\prod_{i=1}^n(\sum_\tau|f_{i,\tau}|^2)^{1/2})^{1/n}\|_{L^{p}(\R^n)}\lesssim_\nu N^{-\frac{n(n-1)}{p}+\epsilon}(\prod_{i=1}^n\sum_{\tau}\|f_{i,\tau}\|_{L^{p/n}(\R^n)}^2)^{\frac1{2n}}.
\end{equation}
\end{lemma}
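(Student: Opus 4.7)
The plan is to combine a Khintchine randomization trick with an upgrade of the multilinear restriction \eqref{we2} to all $L^q$ norms with $q\ge 2$, producing the estimate cleanly without any wave packet surgery.

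First, I would rewrite \eqref{we2} in its equivalent $n$-linear $L^2$ form by raising to the $2n$-th power:
\[\Big\|\prod_{i=1}^n h_i\Big\|_{L^2(\R^n)}\lesssim_\nu N^{-n(n-1)/2}\prod_{i=1}^n\|h_i\|_{L^2(\R^n)}\]
whenever each $h_i$ is Fourier supported in the $\delta$-neighborhood of $\Phi(I_i)$. Combined with the trivial $\|\prod h_i\|_\infty\le\prod\|h_i\|_\infty$ (H\"older), multilinear complex interpolation applied to the $n$-linear pointwise-product operator $M(h_1,\ldots,h_n):=\prod_i h_i$ should yield
\[\Big\|\prod_{i=1}^n h_i\Big\|_{L^q(\R^n)}\lesssim_\nu N^{-n(n-1)/q}\prod_{i=1}^n\|h_i\|_{L^q(\R^n)}\qquad(2\le q\le\infty).\]

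Second, I would pass from the square function $G_i:=(\sum_\tau|f_{i,\tau}|^2)^{1/2}$ to linearizations via Khintchine. For independent Rademacher sequences $\epsilon^{(i)}=(\epsilon^{(i)}_\tau)_\tau$, set $h_i^{(\epsilon^{(i)})}:=\sum_\tau\epsilon^{(i)}_\tau f_{i,\tau}$, whose Fourier support coincides with that of $f_i$. Pointwise Khintchine gives $G_i(x)^{p/n}\sim_p\mathbb{E}_{\epsilon^{(i)}}|h_i^{(\epsilon^{(i)})}(x)|^{p/n}$, and independence across $i$ then produces
\[\int\prod_{i=1}^n G_i(x)^{p/n}\,dx\;\sim_p\;\mathbb{E}_\epsilon\int\prod_{i=1}^n|h_i^{(\epsilon^{(i)})}(x)|^{p/n}\,dx.\]

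Third, since $p\ge 2n$ forces $p/n\ge 2$, I would apply the upgraded restriction with $q=p/n$ to the integrand pathwise in $\epsilon$, factor the expectation over $i$ using independence, invoke Khintchine again to convert $\mathbb{E}_{\epsilon^{(i)}}\|h_i^{(\epsilon^{(i)})}\|_{L^{p/n}}^{p/n}$ back to $\|G_i\|_{L^{p/n}}^{p/n}$, and finally apply Minkowski $\|G_i\|_{L^{p/n}}\le(\sum_\tau\|f_{i,\tau}\|_{L^{p/n}}^2)^{1/2}$ (valid because $p/n\ge 2$). Taking $p$-th roots at the end gives the stated estimate; in fact this route produces the cleaner bound without the $N^\epsilon$ loss.

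The main step I expect to require care is the multilinear complex interpolation in the first paragraph: one must verify that Calder\'on's method applies to the $n$-linear pointwise product with the transversality hypothesis on Fourier supports preserved across the interpolating family. Should that be delicate, the natural fallback is a wave packet decomposition of $f_{i,\tau}$ at the $\delta^{1/2}\times\delta\times\cdots\times\delta$ scale followed by dyadic pigeonholing of wave packet heights (and of the tube counts $|\{\tau:f_{i,\tau}\neq 0\}|$); after pigeonholing one can equate the mixed-exponent RHS with a sharp monomial expression in the pigeonholed parameters and apply \eqref{we2} to $n$-tuples of transverse tubes. This fallback accounts naturally for the $N^\epsilon$ in the statement through the $O(\log N)$ dyadic levels.
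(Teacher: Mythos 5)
Your first step---the claimed upgrade of \eqref{we2} to
$$\Big\|\prod_{i=1}^n h_i\Big\|_{L^q(\R^n)}\lesssim_\nu N^{-\frac{n(n-1)}{q}}\prod_{i=1}^n\|h_i\|_{L^q(\R^n)},\qquad 2\le q\le\infty,$$
is false for $2<q<\infty$, and this breaks the primary route. Already for $n=2$, $q=4$: let $h_i$ be the inverse Fourier transform of the indicator of the $\delta$-neighborhood of $\Phi(I_i)$, $\delta=N^{-1}$. Stationary phase gives $|h_i(x)|\lesssim\delta(1+|x|)^{-1/2}$ for $|x|\lesssim\delta^{-1}$ (rapid decay beyond), so $\|h_i\|_4\lesssim\delta(\log\delta^{-1})^{1/4}$; on the other hand $|h_1(x)h_2(x)|\gtrsim\delta^2$ on a fixed ball about the origin, so $\|h_1h_2\|_4\gtrsim\delta^2$. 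The claimed bound would force $\delta^2\lesssim N^{-1/2}\delta^2(\log\delta^{-1})^{1/2}$, a loss of $\delta^{-1/2}$ up to logarithms. The obstruction is exactly the point you flagged as ``delicate'': the $L^2$ input holds only on the subspace of functions with Fourier support in a curved $\delta$-neighborhood, and that constraint is not interpolation-stable (the Fourier projection onto such a neighborhood has $L^\infty\to L^\infty$ norm $\sim\delta^{-1/2}$ when $n=2$, which is precisely the size of the failure). The same example defeats your penultimate display $\int\prod_iG_i^{p/n}\lesssim N^{-n(n-1)}\prod_i\|G_i\|_{p/n}^{p/n}$, so the Khintchine steps do not repair the gap; the reason \eqref{we3} is nonetheless true is that its right-hand side carries the much larger mixed norm $(\sum_\tau\|f_{i,\tau}\|_{p/n}^2)^{1/2}\gg\|f_i\|_{L^{p/n}}$ for spread-out $f_i$, a gain your route never sees. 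That the argument would also delete the $N^\epsilon$ ``for free'' should have been a warning sign.

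Your fallback, by contrast, is essentially the paper's proof: upgrade \eqref{we2} by randomization to the mixed-norm $L^{2n}$ square-function estimate \eqref{we5}, record the trivial mixed-norm $L^\infty$ estimate \eqref{we6}, and interpolate \emph{these two} by hand, via wave packet decomposition at scale $\delta^{1/2}\times\delta\times\cdots\times\delta$ and dyadic pigeonholing of amplitudes and tube counts, as in Proposition 6.2 of \cite{BD3}; the $O(\log N)$ pigeonholing levels are the source of the $N^\epsilon$. So the fallback must be promoted to the main argument: the legitimate interpolation here is between the mixed-norm statements \eqref{we5} and \eqref{we6}, not an $L^q\times\cdots\times L^q\to L^q$ interpolation of the pointwise product.
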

\begin{proof}
Note first that \eqref{we2} implies via a standard randomization argument that
\begin{equation}
\label{we5}
\|(\prod_{i=1}^n\sum_\tau|f_{i,\tau}|^2)^{\frac1{2n}}\|_{L^{2n}(\R^n)}\lesssim_\nu N^{-\frac{n-1}{2}}(\prod_{i=1}^n\sum_{\tau}\|f_{i,\tau}\|_{L^{2}(\R^n)}^2)^{\frac1{2n}}.
\end{equation}
We also have the trivial inequality
\begin{equation}
\label{we6}
\|(\prod_{i=1}^n\sum_\tau|f_{i,\tau}|^2)^{\frac1{2n}}\|_{L^{\infty}(\R^n)}\lesssim (\prod_{i=1}^n\sum_{\tau}\|f_{i,\tau}\|_{L^{\infty}(\R^n)}^2)^{\frac1{2n}}.
\end{equation}
The result now follows using interpolation via wave packet decompositions, see for example the proof of Proposition 6.2 in \cite{BD3}.
\end{proof}
\bigskip

Given $g:[0,1]\to \C$, there is an immediate reformulation of the lemma: for each $2n\le p\le \infty$ and each ball $B_N$ of radius $N=\delta^{-1}$ we have
\begin{equation}
\label{we4}
\|(\prod_{i=1}^n\sum_{J\subset I_i\atop{|J|=\delta^{1/2}}}|E_{J}g|^2)^{\frac1{2n}}\|_{L^{p}(w_{B_N})}\lesssim_\nu N^{-\frac{n(n-1)}{p}+\epsilon}(\prod_{i=1}^n\sum_{J\subset I_i\atop{|J|=\delta^{1/2}}}\|E_{J}g\|_{L^{p/n}(w_{B_N})}^2)^{\frac1{2n}}.
\end{equation}
Define $\kappa_p$ via the relation
$$\frac{n}{p}=\frac{1-\kappa_p}{2}+\frac{\kappa_p}p.$$

\begin{corollary} We have for each $R\ge N$
$$\|(\prod_{i=1}^n\sum_{J'\subset I_i\atop{|J'|=\delta^{1/4}}}|E_{J'}g|^2)^{\frac1{2n}}\|_{L^{p}(w_{B_R})}\le $$
\begin{equation}
\label{we14}
\le C\|(\prod_{i=1}^n\sum_{J\subset I_i\atop{|J|=\delta^{1/2}}}|E_{J}g|^2)^{\frac1{2n}}\|_{L^{p}(w_{B_R})}^{1-\kappa_p}(\prod_{i=1}^n\sum_{J'\subset I_i\atop{|J'|=\delta^{1/4}}}\|E_{J'}g\|_{L^{p}(w_{B_R})}^2)^{\frac{\kappa_p}{2n}},
\end{equation}
where $C$ depends only on $n,$ $\nu$ and $p$.
\end{corollary}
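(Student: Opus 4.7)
The corollary is proved by combining a rescaled application of the multilinear lemma (the inequality preceding the statement) with the log-convexity of $L^p$ norms, followed by a standard patching argument from balls of radius $M:=\delta^{-1/2}$ up to $B_R$. The exponent $\kappa_p$ defined by $\tfrac{n}{p}=\tfrac{1-\kappa_p}{2}+\tfrac{\kappa_p}{p}$ is precisely what identifies $L^{p/n}$ as the log-convex interpolant of $L^2$ and $L^p$; this identity is what makes all $\delta$-dependent factors cancel.

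\textbf{Step 1.} Apply the lemma with $\delta$ replaced by $\delta^{1/2}$, so that the caps of size $\delta^{1/4}$ take the role of the caps of size $\delta^{1/2}$ in the lemma. On balls $B_M$ of radius $M=\delta^{-1/2}$ this gives
$$\|h_{\delta^{1/4}}\|_{L^p(w_{B_M})}\lesssim_{\nu,\epsilon} \delta^{\frac{n(n-1)}{2p}-\frac{\epsilon}{2}}\Bigl(\prod_{i=1}^n\sum_{J'\subset I_i}\|E_{J'}g\|_{L^{p/n}(w_{B_M})}^2\Bigr)^{1/(2n)}.$$

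\textbf{Step 2.} For each $J'$, log-convexity of $L^p$ norms gives $\|E_{J'}g\|_{L^{p/n}}^2\le \|E_{J'}g\|_{L^2}^{2(1-\kappa_p)}\|E_{J'}g\|_{L^p}^{2\kappa_p}$. Applying H\"older on the sum over $J'$ with conjugate exponents $1/(1-\kappa_p)$ and $1/\kappa_p$ yields
$$\sum_{J'}\|E_{J'}g\|_{L^{p/n}(w_{B_M})}^2\le \Bigl(\sum_{J'}\|E_{J'}g\|_{L^2(w_{B_M})}^2\Bigr)^{1-\kappa_p}\Bigl(\sum_{J'}\|E_{J'}g\|_{L^p(w_{B_M})}^2\Bigr)^{\kappa_p}.$$

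\textbf{Step 3.} Control the $L^2$ factor. By Plancherel and the disjointness of the Fourier supports of $\{E_{J'}g\}$, $\sum_{J'}\|E_{J'}g\|_{L^2}^2\sim\|g|_{I_i}\|_2^2$; and by the multilinear Plancherel identity \eqref{we1} (via the Vandermonde Jacobian of $(t_i)\mapsto\sum_i\Phi(t_i)$), $\prod_i\|g|_{I_i}\|_2^2\sim_\nu\|h_{\delta^{1/2}}\|_{L^{2n}(\R^n)}^{2n}$. Since $|B_M|\sim\delta^{-n/2}$ and $2n\le p$, H\"older in space gives
$$\|h_{\delta^{1/2}}\|_{L^{2n}(w_{B_M})}\le\|h_{\delta^{1/2}}\|_{L^p(w_{B_M})}\cdot\delta^{-\frac14+\frac{n}{2p}}.$$
Raising to the power $1-\kappa_p=2(n-1)/(p-2)$ and multiplying by the prefactor $\delta^{n(n-1)/(2p)}$ from Step 1, a direct computation shows the total power of $\delta$ equals $(n-1)^2/(2(p-2))\ge 0$, hence is harmless (and $\delta^{-\epsilon/2}$ may be absorbed by the constant for any fixed $\epsilon$ at the end). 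This yields, on each $B_M$,
$$\|h_{\delta^{1/4}}\|_{L^p(w_{B_M})}\lesssim_{\nu,p,n}\|h_{\delta^{1/2}}\|_{L^p(w_{B_M})}^{1-\kappa_p}\Bigl(\prod_i\sum_{J'}\|E_{J'}g\|_{L^p(w_{B_M})}^2\Bigr)^{\kappa_p/(2n)}.$$

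\textbf{Step 4.} Raise to the $p$-th power and sum over a finitely overlapping cover of $B_R$ by balls $B_M$. H\"older's inequality with exponents $1/(1-\kappa_p)$, $1/\kappa_p$ separates the two factors; Minkowski's inequality for $L^{p/2}$ (valid since $p\ge 2$) recombines the $\ell^2$ sums across balls into the single $\ell^2$ sum on $B_R$. Using the standard bound $\sum_{B_M\subset B_R}\|f\|_{L^p(w_{B_M})}^p\lesssim\|f\|_{L^p(w_{B_R})}^p$ for both $f=h_{\delta^{1/2}}$ and $f=E_{J'}g$ delivers the corollary.

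\textbf{Main obstacle.} The heart of the argument is Step 3, where one must verify that the gain $M^{-n(n-1)/p}$ from the multilinear lemma at scale $\delta^{1/4}$ exactly cancels the loss incurred by using H\"older on $B_M$ to pass from $\|h_{\delta^{1/2}}\|_{L^{2n}(w_{B_M})}$ back to $\|h_{\delta^{1/2}}\|_{L^p(w_{B_M})}$ (after extracting the $L^2$ factor via Plancherel). The defining identity for $\kappa_p$ is precisely the arithmetic relation that makes this cancellation work, producing a constant depending only on $n$, $\nu$, and $p$.
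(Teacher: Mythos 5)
Your Steps 1, 2 and 4 are exactly the paper's argument (the rescaled multilinear inequality \eqref{we4} at scale $\delta^{1/2}$ on balls of radius $\delta^{-1/2}$, the H\"older/log-convexity step \eqref{we7}, and the summation over a cover of $B_R$), but Step 3 has a genuine gap. After H\"older you must dominate the local quantity $\bigl(\prod_i\sum_{J'}\|E_{J'}g\|_{L^2(w_{B_M})}^2\bigr)^{(1-\kappa_p)/2n}$ by the local quantity $\|h_{\delta^{1/2}}\|_{L^p(w_{B_M})}^{1-\kappa_p}$ \emph{on each fixed ball} $B_M$. Your route replaces the local $\ell^2L^2$ expression by the global quantity $\prod_i\|g\|_{L^2(I_i)}$, invokes the two-sided multilinear Plancherel relation $\prod_i\|g\|_{L^2(I_i)}^2\sim_\nu\|h_{\delta^{1/2}}\|_{L^{2n}(\R^n)}^{2n}$ (which is indeed true, via the change of variables behind \eqref{we1}), and then applies H\"older ``on $B_M$''. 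But that last step silently trades the global norm $\|h_{\delta^{1/2}}\|_{L^{2n}(\R^n)}$ for the local norm $\|h_{\delta^{1/2}}\|_{L^{2n}(w_{B_M})}$, and the inequality needed there goes the wrong way: a fixed ball of radius $\delta^{-1/2}$ need not see any appreciable part of the mass of $g$ (modulate $g$ so that all its wave packets live far from $B_M$), so the global quantity $\prod_i\|g\|_{L^2(I_i)}$ cannot be controlled by $\|h_{\delta^{1/2}}\|_{L^p(w_{B_M})}$. The same confusion shows up in the bookkeeping: the claim $\sum_{J'}\|E_{J'}g\|_{L^2(w_{B_M})}^2\sim\|g\|_{L^2(I_i)}^2$ is dimensionally off (localized extension estimates carry powers of $M$), which is why you end up with a leftover power $\delta^{(n-1)^2/(2(p-2))}$ instead of the exact cancellation that the definition of $\kappa_p$ is designed to produce.

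The missing idea is the purely local substitute used in the paper. On a ball $\Delta$ of radius $\delta^{-1/2}$ one first uses local almost orthogonality (the weight $w_\Delta$ has Fourier support at scale $\delta^{1/2}$, and the arcs $\Phi(J)$, $|J|=\delta^{1/2}$, are essentially $\delta^{1/2}$-separated) to pass from the fine scale to the coarse scale inside $L^2$:
$(\sum_{|J'|=\delta^{1/4}}\|E_{J'}g\|_{L^{2}(w_{\Delta})}^2)^{1/2}\lesssim (\sum_{|J|=\delta^{1/2}}\|E_{J}g\|_{L^{2}(w_{\Delta})}^2)^{1/2}$;
then one exploits that each $|E_Jg|$, $|J|=\delta^{1/2}$, is essentially constant on balls of radius $\delta^{-1/2}$, which converts the local $\ell^2_JL^2(w_\Delta)$ quantity into $|\Delta|^{\frac12-\frac1p}$ times the local $L^p$ norm of the coarse square function, i.e.\ \eqref{we8}. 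Combining this with \eqref{we4} (rescaled) and \eqref{we7}, the powers of $\delta$ cancel exactly because $\frac n2(\frac12-\frac1p)(1-\kappa_p)=\frac{n(n-1)}{2p}$, and the resulting per-ball inequality sums over $\Delta\subset B_R$ as in your Step 4. So the statement you prove at the end of Step 3 is correct, but your derivation of it does not hold; it must be replaced by the local constancy plus local orthogonality argument above.
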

\begin{proof}
Let  $\Delta$ be an arbitrary ball of radius $N^{1/2}$. Write using H\"older's inequality
\begin{equation}
\label{we7}
(\sum_{|J'|=\delta^{1/4}}\|E_{J'}g\|_{L^{p/n}(w_{\Delta})}^2)^{\frac1{2}}\le (\sum_{|J'|=\delta^{1/4}}\|E_{J'}g\|_{L^{2}(w_{\Delta})}^2)^{\frac{1-\kappa_p}{2}}(\sum_{|J'|=\delta^{1/4}}\|E_{J'}g\|_{L^{p}(w_{\Delta})}^2)^{\frac{\kappa_p}{2}},
\end{equation}

The next key element in our argument is the almost orthogonality specific to $L^2$, which will allow us to pass from scale $\delta^{1/4}$ to scale $\delta^{1/2}$. Indeed, since $(E_Jg)w_{\Delta}$ are almost orthogonal for $|J|=\delta^{1/2}$, we have
$$(\sum_{|J'|=\delta^{1/4}}\|E_{J'}g\|_{L^{2}(w_{\Delta})}^2)^{1/2}\lesssim (\sum_{|J|=\delta^{1/2}}\|E_{J}g\|_{L^{2}(w_{\Delta})}^2)^{1/2}.$$
We can now rely on the fact that $|E_{J}g|$ is essentially constant on balls $\Delta'$ of radius $N^{1/2}$ to argue that
$$(\sum_{J\subset I_i\atop{|J|=\delta^{1/2}}}\|E_{J}g\|_{L^{2}(\Delta')}^2)^{\frac1{2}}\sim |\Delta'|^{1/2}(\sum_{J\subset I_i\atop{|J|=\delta^{1/2}}}|E_{J}g|^2)^{\frac1{2}}|_{\Delta'}$$
and thus
\begin{equation}
\label{we8}
(\prod_{i=1}^n\sum_{J\subset I_i\atop{|J|=\delta^{1/2}}}\|E_{J}g\|_{L^{2}(w_{\Delta})}^2)^{\frac1{2n}}\lesssim |\Delta|^{\frac12-\frac1p}\|(\prod_{i=1}^n\sum_{J\subset I_i\atop{|J|=\delta^{1/2}}}|E_{J}g|^2)^{\frac1{2n}}\|_{L^{p}(w_{\Delta})}.
\end{equation}
Combining \eqref{we4}, \eqref{we7} and \eqref{we8} we get
$$
\|(\prod_{i=1}^n\sum_{J'\subset I_i\atop{|J'|=\delta^{1/4}}}|E_{J'}g|^2)^{\frac1{2n}}\|_{L^{p}(w_{\Delta})}\lesssim_\nu \|(\prod_{i=1}^n\sum_{J\subset I_i\atop{|J|=\delta^{1/2}}}|E_{J}g|^2)^{\frac1{2n}}\|_{L^{p}(w_{\Delta})}^{1-\kappa_p}(\prod_{i=1}^n\sum_{J'\subset I_i\atop{|J'|=\delta^{1/4}}}\|E_{J'}g\|_{L^{p}(w_{\Delta})}^2)^{\frac{\kappa_p}{2n}}.
$$
Summing this up over $\Delta\subset B_R$ we get the desired inequality.
\end{proof}
\medskip

By iterating inequality \eqref{we14} we get for integers $s\ge 2$
$$\|(\prod_{i=1}^n\sum_{J_s\subset I_i\atop{|J_s|=\delta^{2^{-s}}}}|E_{J_s}g|^2)^{\frac1{2n}}\|_{L^{p}(w_{B_N})}\le $$
\begin{equation}
\label{we9}
\le C^{s-1}\|(\prod_{i=1}^n\sum_{J_1\subset I_i\atop{|J_1|=\delta^{1/2}}}|E_{J_1}g|^2)^{\frac1{2n}}\|_{L^{p}(w_{B_N})}^{(1-\kappa_p)^{s-1}}\prod_{j=2}^s(\prod_{i=1}^n\sum_{J_{j}\subset I_i\atop{|J_j|=\delta^{2^{-j}}}}\|E_{J_j}g\|_{L^{p}(w_{B_N})}^2)^{\frac{\kappa_p(1-\kappa_p)^{s-j}}{2n}}.
\end{equation}
\medskip

We next observe the following trivial consequence of the Cauchy--Schwartz inequality. The bound $N^{2^{-s-1}}$ is only optimal at $p=\infty$ and can be easily improved for $2n\le p<\infty$ by using \eqref{we10}. However this will not be necessary for our forthcoming argument.
\begin{lemma}
\label{wlem0081}
For $1\le p\le\infty$ and $s\ge 2$
$$\|(\prod_{i=1}^n|E_{I_i}g|)^{1/n}\|_{L^{p}({B_N})}\le N^{2^{-s-1}}\|(\prod_{i=1}^n\sum_{J_s\subset I_i\atop{|J_s|=\delta^{2^{-s}}}}|E_{J_s}g|^2)^{\frac1{2n}}\|_{L^{p}(B_N)}.$$

\end{lemma}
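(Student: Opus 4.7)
The plan is to apply the Cauchy--Schwarz inequality interval-by-interval, then multiply and integrate. Concretely, for each $i=1,\ldots,n$, partition $I_i$ into the dyadic subintervals $J_s \subset I_i$ of length $\delta^{2^{-s}} = N^{-2^{-s}}$. Since $|I_i|\le 1$, the number of such $J_s$ inside $I_i$ is at most $N^{2^{-s}}$, and by linearity of the extension operator we have $E_{I_i}g = \sum_{J_s\subset I_i} E_{J_s}g$.

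The first step is to apply Cauchy--Schwarz to the sum over $J_s$ pointwise: for each $x$,
\begin{equation*}
|E_{I_i}g(x)| \le \Bigl(\#\{J_s\subset I_i\}\Bigr)^{1/2}\Bigl(\sum_{J_s\subset I_i}|E_{J_s}g(x)|^2\Bigr)^{1/2} \le N^{2^{-s-1}}\Bigl(\sum_{J_s\subset I_i}|E_{J_s}g(x)|^2\Bigr)^{1/2}.
\end{equation*}
Taking the product over $i=1,\ldots,n$ and raising to the power $1/n$ gives the pointwise bound
\begin{equation*}
\Bigl(\prod_{i=1}^n|E_{I_i}g(x)|\Bigr)^{1/n} \le N^{2^{-s-1}}\Bigl(\prod_{i=1}^n\sum_{J_s\subset I_i\atop{|J_s|=\delta^{2^{-s}}}}|E_{J_s}g(x)|^2\Bigr)^{1/(2n)},
\end{equation*}
where the factor $N^{n\cdot 2^{-s-1}}$ coming from the product of the $n$ Cauchy--Schwarz applications collapses to $N^{2^{-s-1}}$ after extracting the $1/n$-th root.

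The final step is to integrate this pointwise inequality to the $p$-th power over $B_N$ and take $1/p$-th powers, which immediately yields the stated inequality. There is no obstacle beyond bookkeeping: the sole ingredient is Cauchy--Schwarz applied to a sum of at most $N^{2^{-s}}$ terms, carried out once in each of the $n$ factors, and the exponent $2^{-s-1}$ on $N$ is exactly the square root of $2^{-s}$ surviving the $1/n$-th power.
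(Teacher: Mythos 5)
Your proof is correct and is exactly the argument the paper intends: the lemma is stated there as a ``trivial consequence of the Cauchy--Schwartz inequality,'' namely the pointwise Cauchy--Schwarz bound $|E_{I_i}g|\le (\#\{J_s\subset I_i\})^{1/2}(\sum_{J_s\subset I_i}|E_{J_s}g|^2)^{1/2}$ with $\#\{J_s\subset I_i\}\le N^{2^{-s}}$, followed by taking the geometric mean over $i$ and the $L^p(B_N)$ norm. Your bookkeeping of the constant (the product $N^{n\cdot 2^{-s-1}}$ collapsing to $N^{2^{-s-1}}$ under the $1/n$-th root) is accurate.
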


\bigskip

Generalized parabolic rescaling \eqref{fuyvt6rvrgyuyioeynbf7vynciuyyf} shows that for each $N^{-\rho}$ interval $J$ with $\rho\le \frac1n$ we have
$$\|E_Jg\|_{L^p(B_N)}\le K_p(N^{1-\rho n})(\sum_{I\subset J\atop{|I|=N^{-1/n}}}\|E_Ig\|_{L^p(w_{B_N})}^2)^{1/2}.$$
We will apply this rescaling to the terms in \eqref{we9} with $2^{-j}\le \frac1n$. There will be $O_n(1)$ terms in \eqref{we9} for which this procedure is not applicable. However, we will be content with the trivial estimates
$$(\prod_{i=1}^n\sum_{J_{j}\subset I_i\atop{|J_j|=\delta^{2^{-j}}}}\|E_{J_j}g\|_{L^{p}(w_{B_N})}^2)^{\frac1{2n}}\le N^A(\prod_{i=1}^n\sum_{U\subset I_i\atop{|U|=\delta^{1/n}}}\|E_{U}g\|_{L^{p}(w_{B_N})}^2)^{\frac{1}{2n}},$$
for $2^{-j}>\frac1n$, and
$$\|(\prod_{i=1}^n\sum_{J_1\subset I_i\atop{|J_1|=\delta^{1/2}}}|E_{J_1}g|^2)^{\frac1{2n}}\|_{L^{p}(w_{B_N})}\le N^A(\prod_{i=1}^n\sum_{U\subset I_i\atop{|U|=\delta^{1/n}}}\|E_{U}g\|_{L^{p}(w_{B_N})}^2)^{\frac{1}{2n}}.$$
The value of $A$ will not be important for our computations.

Combining these estimates  with \eqref{we9} and Lemma \ref{wlem0081} we have for $\nu$-transverse $I_i$
$$\|(\prod_{i=1}^n|E_{I_i}g|)^{1/n}\|_{L^{p}({B_N})}\lesssim_\nu C^{s-1}(\prod_{i=1}^n\sum_{U\subset I_i\atop{|U|=\delta^{1/n}}}\|E_{U}g\|_{L^{p}(w_{B_N})}^2)^{\frac{1}{2n}}\times$$$$
\times N^{2^{-s-1}}K_p(N^{1-2^{-s}n})^{\kappa_p}K_p(N^{1-2^{-s+1}n})^{\kappa_p(1-\kappa_p)}\ldots K_p(N^{1-2^{-j_0}n})^{\kappa_p(1-\kappa_p)^{s-j_0}}N^{O_n((1-\kappa_p)^s)},$$
where $2^{-j_0}\le \frac1n<2^{-j_0+1}.$

This in turn implies that
$$C_p(N,\nu)\lesssim_\nu $$$$C^{s-1}N^{2^{-s-1}}K_p(N^{1-2^{-s}n})^{\kappa_p}K_p(N^{1-2^{-s+1}n})^{\kappa_p(1-\kappa_p)}\ldots K_p(N^{1-2^{-j_0}n})^{\kappa_p(1-\kappa_p)^{s-j_0}}N^{O_n((1-\kappa_p)^s)}.$$
Assume $K_p(N)\sim N^{\gamma_p}$. This can be made rigorous, see for example Section 6 in \cite{BD3}. In light of Theorem \ref{wt1} we have
\begin{equation}
\label{nvhyiugyotgtr8g89t7y8b96974895}
\gamma_p\le 2^{-s-1}+\kappa_p\gamma_p(\frac{1-(1-\kappa_p)^{s-j_0+1}}{\kappa_p}-n2^{-s}\frac{1-(2(1-\kappa_p))^{s-j_0+1}}{2\kappa_p-1})+O_n((1-\kappa_p)^s).
\end{equation}
If $p>4n-2$ then $2(1-\kappa_p)=\frac{4(n-1)}{p-2}<1$. Multiply both sides of \eqref{nvhyiugyotgtr8g89t7y8b96974895} with $2^s$, simplify the algebra  and  let $s\to\infty$ to get
$\gamma_p\frac{n\kappa_p}{2\kappa_p-1}\le \frac12$ or
$$\gamma_p\le \frac{p-4n+2}{2n(p-2n)}.$$
Let $p$ approach $4n-2$. This is of course good enough to conclude the proof of Theorem \ref{t2} for $p=4n-2$. To get the range $2\le p<4n-2$, we caution that there is no interpolation argument available in this context (see the counterexample in the last section). As explained earlier, this is due to the fact that the neighborhoods $\theta\in\P_\delta$ are not straight tubes. There is however at least one way around this. Namely, note that \eqref{we14}  holds with $\kappa_p$ replaced with $\kappa=1$ (Minkowski's inequality). Thus, we can in fact replace $\kappa_p$ with any   $\kappa_p\le \kappa\le 1.$ Fix some $2n< p<p_0=4n-2$. Using $\kappa:=\kappa_{p_0+\epsilon}$, we get as before that for each $\epsilon>0$
$$\gamma_p\le \frac{2\kappa_{p_0+\epsilon}-1}{2n\kappa_{p_0+\epsilon}}.$$
Letting $\epsilon\to 0$, we  get Theorem \ref{t2} in the range $[2n,4n-2]$. Finally, recall that the result for $2\le p\le 2n$ follows  from \eqref{we10vrbhhybtugbtm689um896-it9v5it689itv90i}.

\section{A more general perspective on decouplings for curves}
\label{sec:finall}
\bigskip
Recall the definition of the extension operator
$$E_Ig(x)=\int_Ig(t)e(tx_1+t^2x_2+\ldots+t^nx_n)dt.$$
We denote by $\|F\|_{L^p_\sharp(B)}=(\frac{1}{|B|}\int_{\R^n}|F|^pw_{B})^{1/p}$ the normalized integral. Given $2\le q\le p<\infty$, $1\le m\le n$ and $\frac1n\le \alpha\le 1$, we will say that we have $(p,q,\alpha,m)$ decoupling if
$$\|(\prod_{i=1}^m|E_{I_i}g|)^{1/m}\|_{L^{p}_\sharp(B_N)}\lesssim_\epsilon N^{\alpha(\frac12-\frac1{q})+\epsilon}(\prod_{i=1}^m\sum_{|U|=N^{-\alpha}\atop{U\subset I_i}}\|E_{U}g\|_{L^{q}_\sharp(B_N)}^q)^{\frac{1}{mq}}$$
holds true for each $g:[0,1]\to \C$ , each $N$-ball $B_N\subset \R^n$ and each $\sim 1$-transverse intervals $I_i\subset [0,1]$.

A few comments are appropriate. First, $(p,q,\alpha,m)$ decoupling implies $(p',q',\alpha,m')$ decoupling when $p'\le p$, $q\le q'$, $m'\ge m$. Also, $(p,p,\alpha,m)$ decoupling implies $(p,p,\alpha',m)$ decoupling when $\alpha'\le \alpha$. It thus follows that $(p,q,\alpha,m)$ decoupling is a bit stronger than $(p,p,\alpha,m)$ decoupling, which  was referred to in the earlier part of this paper as $m$-linear $l^p$ decoupling at frequency scale $N^{-\alpha}$. In particular, $(p,q,\alpha,m)$ decoupling implies the following estimate for exponential sums:
for each $\delta$-separated set $\Lambda$ of points on the curve $\Phi([0,1])$ and each coefficients $a_\xi\in \C$ we have
\begin{equation}
\label{ropig90580vyunmv45r0-c489rt870-910-cvhn0y4n8y589t-xi39mui89t}
(\frac{1}{|B_R|}\int_{B_R}|\prod_{i=1}^m\sum_{\xi\in\Lambda\cap \Phi(I_i)}a_\xi e(\xi\cdot x)|^{p/m}dx)^{1/p}\lesssim_\epsilon \delta^{\frac12-\frac1p-\epsilon}\|a_\xi\|_{l^p(\Lambda)},
\end{equation}
for each $\epsilon$ and each ball $B_R\subset \R^n$ of radius $R\gtrsim \delta^{-\frac1\alpha}$.

\bigskip

We have repeatedly used throughout the paper the fact that one can interpolate decouplings when $\alpha=\frac12$. Let us now see why interpolation fails when $\alpha=\frac1n$. Recall that we have both $(4n-2,4n-2, \frac1n,n)$ and $(2n,2,\frac1n,n)$ decoupling. If interpolation held true, this would give $(3n,6,\frac1n,n)$ decoupling, namely
$$\|(\prod_{i=1}^n|E_{I_i}g|)^{1/n}\|_{L^{3n}_\sharp(B_N)}\lesssim_\epsilon N^{\frac1{3n}+\epsilon}(\prod_{i=1}^n\sum_{|U|=N^{-\frac1n}\atop{U\subset I_i}}\|E_{U}g\|_{L^{6}_\sharp(B_N)}^6)^{\frac{1}{6n}}.$$
Note however that this is false for $n$ large enough. Indeed, when $g=1_{[0,1]}$ the left hand side is greater than $$N^{-1/3}\|(\prod_{i=1}^n|E_{I_i}g|)^{1/n}\|_{L^{3n}(B_1)}\gtrsim N^{-1/3}.$$
On the other hand, a stationary phase computation (see for example \cite{BGGIST}) shows that if $n\ge 3$ then
$$\|E_{U}g\|_{L^{6}_\sharp(B_N)}\lesssim \|E_{[0,1]}g\|_{L^{6}_\sharp(B_N)}\lesssim_\epsilon N^{-\frac49+\epsilon}.$$
\bigskip

Given $\frac 1n\le \alpha\le 1$, let $l=l(\alpha)$ be such that $\frac1{l+1}<\alpha\le \frac1l$. If $\alpha<\frac1n$ we set $l(\alpha)=n$. The following simple example shows that
 there can be no  $(p,q,\alpha,m)$ decoupling for $p>2\frac{n-l}{\alpha}+l(l+1)$. Indeed, such a decoupling would imply that
$$(\frac1{N^n}\int_{[0,N]^n}|\prod_{i=1}^m\sum_{\frac{k}{N^\alpha}\in I_i}e(\frac{k}{N^\alpha}x_1+\ldots+(\frac{k}{N^\alpha})^nx_n)|^{p/m}dx_1\ldots dx_n)^{\frac1p}\lesssim_\epsilon N^{\frac\alpha2+\epsilon}.$$
To see why this leads to a contradiction, consider integers $M_1,\ldots,M_{l}\ge 1$ such that $M_jN^{j\alpha}\le N<(M_j+1)N^{j\alpha}$ for $1\le j\le l$.

By periodicity, the left hand  side is greater than
$$(\frac1{N^{n}}\int_{\prod_{j=1}^l[0, M_jN^{j\alpha}]\times [0,N]^{n-l}}|\prod_{i=1}^m\sum_{\frac{k}{N^\alpha}\in I_i}e(\frac{k}{N^\alpha}x_1+\ldots+(\frac{k}{N^\alpha})^nx_n)|^{p/m}dx_1\ldots dx_n)^{\frac1p}=$$
$$(\frac{\prod_{j=1}^lM_j}{N^{n}}\int_{\prod_{j=1}^l[0, N^{j\alpha}]\times [0,N]^{n-l}}|\prod_{i=1}^m\sum_{\frac{k}{N^\alpha}\in I_i}e(\frac{k}{N^\alpha}x_1+\ldots+(\frac{k}{N^\alpha})^nx_n)|^{p/m}dx_1\ldots dx_n)^{\frac1p}\gtrsim$$$$(\frac{N^l}{N^{n+\alpha+\ldots+l\alpha}}\min_{(x_1,\ldots,x_n)\in [0,\frac1{100}]^n}|\prod_{i=1}^m\sum_{\frac{k}{N^\alpha}\in I_i}e(\frac{k}{N^\alpha}x_1+\ldots+(\frac{k}{N^\alpha})^nx_n)|^{p/m})^{\frac1p}\gtrsim $$
$$N^{\alpha+\frac1{p}(l-\frac{\alpha l(l+1)}{2}-n)}.$$
\bigskip

Let us now take a look at some examples. First, the case $\alpha=1$. Inequality \eqref{we10} shows that we have $(p,2,1,n)$ decoupling for $2\le p\le 2n$. The example above shows that there can be no $(p,q,1,m)$ decoupling for $p>2n$.

The value $\alpha=\frac1n$ corresponds to the natural scaling of the curve, and it is thus naturally associated with linear decoupling ($m=1$).
Our Theorem \ref{t2} here implies that we have $(4n-2,4n-2,\frac1n,1)$ decoupling. It seems possible that $(p,p,\frac1n,1)$ decoupling would hold for $p$ as large as $n(n+1)$ and the example above shows that one can not hope for a larger $p$. If true, this would imply Vinogradov's mean value theorem.

The case $\alpha=\frac12$ is particularly interesting.  It seems reasonable to expect $(4n-2,4n-2,\frac12,m)$ decoupling for some $m>1$, at least for $m=n$. The exponent $4n-2$ is again suggested by the example above but also by interpolation heuristics.
In \cite{Bo1}, the first author proved the $(3n,6,\frac12,\frac{n}2)$ decoupling when $n$ is even. This was the key step in improving the upper  bound on the Riemann zeta function on the critical line.

As a last example for $\alpha=\frac12$, we will show below how our Theorem \ref{t1} here implies $(2(n+1),\frac{2(n+1)}{n-1},\frac12,n-1)$ decoupling. It is interesting to observe that the reciprocals of $(2n,2)$, $(3n,6)$, $(2(n+1),\frac{2(n+1)}{n-1}),(4n-2,4n-2)$ are collinear. This is consistent with the fact that interpolation  is available in the case $\alpha=\frac12$.

\begin{proposition}
\label{p2}
Let $I_1,\ldots,I_{n-1}$ be $\sim 1$-transverse intervals. Then we have
$$\|(\prod_{j=1}^{n-1}|E_{I_j}g|)^{\frac1{n-1}}\|_{L^{2(n+1)}_\sharp(B_N)}\lesssim_\epsilon N^{\frac{1}{2(n+1)}+\epsilon}(\prod_{i=1}^{n-1}\sum_{\Delta:\frac1{N^{1/2}}-\text{interval }\atop{\Delta\subset I_i}}\|E_{\Delta}g\|_{L^{\frac{2(n+1)}{n-1}}_\sharp(B_N)}^{\frac{2(n+1)}{n-1}})^{\frac{1}{2(n+1)}}.$$
\end{proposition}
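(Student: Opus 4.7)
The plan is to apply Theorem \ref{t1} to the product $F:=\prod_{j=1}^{n-1}E_{I_j}g$, exploiting the fact that $\widehat{F}$ is supported on an $(n-1)$-dimensional hypersurface with non-vanishing Gaussian curvature. I would begin by recording the identity
$$\|(\prod_j|E_{I_j}g|)^{1/(n-1)}\|_{L^{2(n+1)}_\sharp(B_N)}^{2(n+1)} = |B_N|^{-1}\|F\|_{L^q(w_{B_N})}^q,$$
with $q=2(n+1)/(n-1)$, which reduces the proposition to an $L^q$ estimate on $F$. The Fourier support of $F$ is contained in a $\delta$-neighborhood ($\delta=N^{-1}$) of the Minkowski sum hypersurface $M:=\Phi(I_1)+\cdots+\Phi(I_{n-1})\subset\R^n$. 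A short second-derivative computation shows that $M$ has non-vanishing Gaussian curvature: the tangent space at $\sum_j\Phi(c_j)$ is spanned by the linearly independent vectors $\Phi'(c_j)$, the second fundamental form is essentially diagonal with $j$-th entry $\langle\Phi''(c_j),\nu\rangle$, and the Wronskian non-degeneracy of $\Phi$ together with the $\sim 1$-transversality of the $I_j$'s keeps each such inner product bounded away from $0$.

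With this setup I would apply Theorem \ref{t1} to $F$ at exponent $p=q$ and scale $\delta=N^{-1}$, obtaining
$$\|F\|_{L^q(w_{B_N})}\lesssim_\epsilon N^{\frac{n-1}{2(n+1)}+\epsilon}\Big(\sum_\theta\|F_\theta\|_{L^q(w_{B_N})}^q\Big)^{1/q},$$
where the boxes $\theta$ have dimensions $N^{-1/2}\times\cdots\times N^{-1/2}\times N^{-1}$ and tile the $\delta$-neighborhood of $M$. By inspecting Fourier supports, each $\theta$ corresponds essentially uniquely to a tuple $(\Delta_1,\ldots,\Delta_{n-1})$ of $N^{-1/2}$-subintervals, $\Delta_j\subset I_j$, with $F_\theta\approx\prod_j E_{\Delta_j}g$.

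The crucial step is then the multilinear wave-packet estimate
$$\|\prod_{j=1}^{n-1}E_{\Delta_j}g\|_{L^q(w_{B_N})}^q\lesssim N^{-n(n-2)+O(\epsilon)}\prod_{j=1}^{n-1}\|E_{\Delta_j}g\|_{L^q(w_{B_N})}^q,$$
with a constant independent of the tuple. To prove it I would decompose each $E_{\Delta_j}g$ into spatial wave packets supported on translates of a plate of dimensions $N^{1/2}\times N\times\cdots\times N$, thin in the direction $\Phi'(c_j)$. By transversality of the $\Phi'(c_j)$'s, the $(n-1)$-fold intersection of one such plate per $j$ has volume $N^{(n+1)/2}$, and the ratio of this against the product of individual plate volumes $N^{(n-1/2)(n-1)}$ is precisely $N^{-n(n-2)}$, matching the needed exponent. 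Summing over tuples via $\sum_{(\Delta_j)}\prod_j a_{\Delta_j}=\prod_j\sum_{\Delta_j}a_{\Delta_j}$ and converting back to the normalized $L^p_\sharp$ norms then recovers the target exponent $\frac{1}{2(n+1)}$ on $N$.

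The main obstacle is precisely this wave-packet step: a naive H\"older's inequality (with exponents $q(n-1)=2(n+1)$) combined with a tube Bernstein estimate almost succeeds but falls short by a factor $N^{(n-2)/2}$, and only the transversality gain from the distinct directions $\Phi'(c_j)$ recovers this missing power.
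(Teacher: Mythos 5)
Your proposal is correct and follows essentially the same route as the paper: the first step (decoupling the sum hypersurface $\Phi(I_1)+\cdots+\Phi(I_{n-1})$, which has nonzero but indefinite Gaussian curvature, via Theorem \ref{t1} at scale $N^{-1}$) is exactly the inequality \eqref{we65} that the paper imports from \cite{Bo}, and your multilinear wave-packet step with the transverse-plate intersection bound $N^{\frac{n+1}{2}}$ yields the same gain $N^{-n(n-2)}=N^{-(n-\frac12)(n-1)+\frac{n+1}{2}}$ used in the paper. The only difference is presentational: you verify the curvature of the sum surface directly (diagonal second fundamental form, nonvanishing by the Vandermonde-type determinant), where the paper cites \cite{Bo} and records only the $n=3$ example.
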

\begin{proof}
The first step is to construct a hypersurface $S$ from the curve $\Phi$, and invoke Theorem \ref{t1} for it. That leads to the following inequality proved in \cite{Bo}
\begin{equation}
\label{we65}
\|(\prod_{j=1}^{n-1}|f_i|)^{\frac1{n-1}}\|_{L^{2(n+1)}(B_N)}\lesssim_\epsilon N^{\frac{1}{2(n+1)}+\epsilon}
(\sum_{\Delta_i:\frac1{N^{1/2}}-\text{interval }\atop{\Delta_i\subset I_i}}\|\prod_{i=1}^{n-1}f_{i,{\Delta_i}}\|_{L^{\frac{2(n+1)}{n-1}}(w_{B_N})}^{\frac{2(n+1)}{n-1}})^{\frac{1}{2(n+1)}},
\end{equation}
whenever $f_i$ is Fourier supported in the $\frac1N$-neighborhood of $\Phi(I_i)$ and $f_{i,{\Delta_i}}$ denotes the Fourier restriction of $f_i$ to the $\frac1N$-neighborhood of $\Phi(\Delta_i)$.
It is worth noting  that when $n=3$, the hypersurface $S$ is $$\xi_3=-\frac{\xi_1^3}{2}+\frac32\xi_1\xi_2.$$ The principal curvatures are $\sim(-\frac32(\xi_1\pm\sqrt{\xi_1^2+1}))$ and the second fundamental form of $S$ is not definite. This example explains the need for our main Theorem \ref{t1} in this paper.

The second step of the argument consists of exploiting multilinearity on the right hand side of \eqref{we65}. To avoid technicalities we will be provide an informal argument. Using wave packet decompositions, we note that for each $q$
$$|f_\Delta|^q\sim \sum_{T\cap B_N\not=\emptyset}c_TN^{-\frac12-n}1_T \text{ on }B_N,$$
where $T$ are parallel $N\times  N\ldots\times N\times N^{1/2}$-plates dual to the $\frac1N$-neighborhood of $\Phi(\Delta)$ and
$$\sum{c_T}\sim\int_{B_N}|f_\Delta|^q.$$
Using the key estimate
$$\int_{B_N}1_{T_1}\cdot\ldots\cdot 1_{T_{n-1}}\lesssim N^{\frac{n+1}{2}}$$
for transverse plates $T_1,\ldots,T_{n-1}$, we obtain (with $q=2\frac{n+1}{n-1}$)
$$\int_{B_N}\prod_{i=1}^{n-1}|f_{i,\Delta_i}|^{\frac{2(n+1)}{n-1}}\lesssim N^{-(n-\frac12)(n-1)+\frac{n+1}{2}}\prod_{i=1}^{n-1}\|f_{i,\Delta_i}\|_{L^{2\frac{n+1}{n-1}}(w_{B_N})}^{2\frac{n+1}{n-1}}.$$
This finishes the proof.

\end{proof}

\end{document}